\documentclass[12pt]{amsart}


\usepackage{amsmath, amssymb, amscd, verbatim, xspace,amsthm}
\usepackage{latexsym, epsfig, color}

\newcommand{\A}{\ensuremath{{\mathbb{A}}}}

\newcommand{\Z}{\ensuremath{{\mathbb{Z}}}\xspace}
\renewcommand{\P}{\ensuremath{{\mathbb{P}}}}

\newcommand{\F}{\ensuremath{{\mathbb{F}}}}

\newcommand{\ra}{\rightarrow}

\newcommand\im{\operatorname{im}}

\newcommand\Prob{\operatorname{Prob}}

\newcommand\tensor{\otimes}
\newcommand\isom{\cong}
\newcommand\sub{\subset}
\newcommand\tesnor{\otimes}

\newcommand\Spec{\operatorname{Spec}}

\newcommand\cO{\mathcal{O}}

\newcommand\bq{\begin{equation}}
\newcommand\eq{\end{equation}}

\newtheorem{proposition}{Proposition}[section]
\newtheorem{theorem}[proposition]{Theorem}
\newtheorem{corollary}[proposition]{Corollary}

\newtheorem{lemma}[proposition]{Lemma}

\theoremstyle{remark}
\newtheorem{remark}[proposition]{Remark}
\newtheorem{example}[proposition]{Example}

\usepackage[headings]{fullpage}
\usepackage[all]{xy}

\newcommand\reg{\operatorname{reg}}


\newtheorem{nts}{Note to self}



\newcommand{\defi}[1]{\textsf{#1}} 

\newcommand{\Fbar}{{\overline{\F}}}

\newcommand{\calQhigh}{{\mathcal Q}^{\operatorname{high}}}

\newcommand{\II}{{\mathcal I}}

\newcommand{\red}{{\operatorname{red}}}

\newcommand{\olP}{{\overline{P}}}

\newcommand{\bF}{{\mathbf F}}
\newcommand{\Xsm}{X_{\text{sm}}}

\usepackage{fullpage}

\title{Semiample Bertini theorems over finite fields }

\author{Daniel Erman}
\address{Department of Mathematics \\ University of Michigan \\
  Ann Arbor, MI 48109} 
\email{erman@umich.edu}

\author{Melanie Matchett Wood}
\address{Department of Mathematics \\ University of Wisconsin-Madison\\
  Madison, WI, 53706 \\ and American Institute of Mathematics\\360 Portage Ave \\
Palo Alto, CA 94306}
\email{mmwood@math.wisc.edu}

\begin{document}

\begin{abstract}
We prove a semiample generalization of Poonen's Bertini Theorem over a finite field that implies the existence of smooth sections for wide new classes of divisors.  The probability of smoothness is computed as a product of local probabilities taken over the fibers of the morphism determined by the relevant divisor.  
We give several applications including a negative answer to a
question of Baker and Poonen by constructing a variety (in fact one of each dimension) which provides
a counterexample to Bertini over finite fields in arbitrarily large projective
spaces. As another application, we determine the probability of smoothness for
curves in Hirzebruch surfaces, and the distribution of points on those smooth
curves.
\end{abstract}
\maketitle

\section{Introduction}
Fix a finite field $\F_q$.  What is the probability that a projective plane curve is smooth as the degree gets large?
More precisely, we let $C_d$ be a (uniform) random degree $d$ curve over $\F_q$ in $\P^2$, and we ask for the limit (if it exists) of the probability that $C_d$ is smooth.
 Assuming the local probability of being smooth at a point acts independently over the points of $\P^2$,
 we would predict that
\begin{equation}\label{E:predict}
 \lim_{d\ra \infty} \Prob(C_d \textrm{ is smooth})=\prod_{P\in \P^2} \lim_{d\ra\infty}\Prob(C\textrm{ smooth at }P) 
\end{equation}
This heuristic prediction is proven by Poonen \cite[Theorem~1.1]{poonen}, which
treats a much more general scenario and gave the first Bertini Theorem over a finite field. (See Remark 2 after Theorem~\ref{T:bertini}).

Suppose now we take curves in $\P^1\times\P^1$, which have a bidegree $(n,d)$.  There are many ways to let the ``degree'' of the curve go to infinity that are not treated by Poonen's theorem.  For instance, what about curves of bidegree $(2,d)$ as $d\ra\infty$?

The above heuristic does not apply to such curves, and to see this we consider the first projection $\pi: \P^1\times\P^1 \ra \P^1$.
If $C$ is a curve of bidegree $(2,d)$, then a fiber $\pi^{-1}(P)$ either is a component of $C$ or intersects $C$ with in zero-dimensional scheme of multiplicity $2$.  
Now if
$\pi^{-1}(P)$ contains more than two singular points of $C$, then every point of $\pi^{-1}(P)$ must be a singular point of $C$ (as that 
fiber must be a non-reduced component of $C$).  Thus, singularity at some points can determine singularity at other points in the fiber, and we conclude that the local smoothness probabilities fail to behave independently within a single fiber, even as $d\to \infty$.
However, we will show in Theorem~\ref{T:bertini} that this is the only dependence as $d\to \infty$; namely, assuming independence among the fibers of $\pi$ predicts the correct answer.

For a third example, let $\pi\colon X\to \P^2$ be the blow-up at an $\F_q$ point, with exceptional divisor $E$.  Curves in $X$ have a bidegree $(d,n)$ where $n+d$ is the degree of the image in $\P^2$ and $n$ is the intersection number with $E$. 
If we take curves with $n$ fixed and $d\to \infty$, then for a similar reason to the failure of independence along $\pi^{-1}(P)$ above, smoothness fails to act independently for points along $E$.  This turns out to be the only dependence; as $d\to \infty$, assuming independence among the fibers of $\pi$ predicts the correct probability of smoothness for these curves.

These examples 
motivate our main result, stated below and proven at the end of \S\ref{S:main theorem}.
When we say $D$ is a \emph{random divisor in $|nA+dE|$ as $d\ra\infty$}, we define (if the limit exists)
$$
\Prob(D\in \mathcal{P}):= \lim_{d\ra\infty} \Prob(D_d\in\mathcal{P})
$$
where $D_d$ is a uniform random divisor in $|nA+dE|$ defined over $\F_q$ and $\mathcal P$ is any set.
We take the convention that $D$ is smooth at any point it does not contain.

\begin{theorem}[Semiample Bertini]\label{T:bertini}
Let $X$ be a smooth projective variety over $\F_q$, with a very ample divisor $A$ and a globally generated divisor $E$.  
Let $\pi$ be the map given by the complete linear series on $E$.
\[
 \pi\colon X \overset{|E|}{\longrightarrow} \P^M
\]
There exists an $n_0$, depending only on
$\dim X$ and $\operatorname{char}(\F_q)$, such that for $n\geq n_0$,
the probability of smoothness for 
a random 
$D\in|nA+dE|$ as $d\ra\infty$ is given by the product of local probabilities taken over the fibers of $\pi$:
\[
\Prob(D\textrm{ is smooth})
=\prod_{P\in  \P^M} \Prob(D \textrm{ is smooth at all points of }\pi^{-1}(P)).
\]
The product on the right converges, is zero only if some factor is zero, and is always non-zero for $n$ sufficiently large.
\end{theorem}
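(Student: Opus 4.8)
The plan is to follow Poonen's closed-point sieve, but organized fiber-by-fiber over $\pi$ rather than point-by-point. Write $\mathcal F_d = H^0(X, \mathcal O(nA+dE))$; a divisor $D$ is smooth if and only if the corresponding section $s$ is nonzero at every point and its vanishing scheme is smooth everywhere. The fundamental decomposition is: smoothness is a condition imposed locally, and we partition $X$ into the fibers $\pi^{-1}(P)$ for $P \in \P^M(\Fbar_q)$, grouped into Galois orbits. For a fixed Galois-stable finite union of fibers $Z$ over a closed point of $\P^M$, I would compute $\lim_{d\to\infty}\Prob(D \text{ smooth at all points of } Z)$ by the same linear-algebra argument Poonen uses: restricting sections of $nA+dE$ to a suitable infinitesimal neighborhood of $Z$ becomes surjective once $d$ is large (since $nA+dE$ is very ample for $d\gg 0$ with $n$ fixed — here is where we use that $A$ is very ample), so the restricted section is uniformly distributed, and the local probability is a ratio of counts on that neighborhood. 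The key point making the fiber the right unit: $E$ is pulled back from $\P^M$, so on $\pi^{-1}(P)$ the divisor $dE$ contributes only a global scalar, and the only variation of $s|_{\pi^{-1}(P)}$ as $d$ varies comes from the fixed $nA$ part; this is exactly why dependence is confined within a fiber and why the limit over $d$ has a clean product shape.

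The heart of the argument is the sieve that lets us multiply these local probabilities over infinitely many fibers. Split $\P^M$ into points of low, medium, and high degree relative to $d$ (as functions of $d\to\infty$), mirroring Poonen's trichotomy but with $\P^M$ in place of the ambient variety. For low-degree points (bounded degree), the local probabilities at the finitely many relevant fibers multiply out and converge to $\prod_P \Prob(D\text{ smooth on }\pi^{-1}(P))$ by the computation above plus an inclusion–exclusion/independence check that the conditions on distinct fibers become asymptotically independent — again because the restriction maps to neighborhoods of disjoint fiber-unions are simultaneously surjective for $d$ large. For medium-degree points one shows the probability that $D$ is singular somewhere on such a fiber is summably small. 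The genuinely hard part is the high-degree estimate: bounding the probability that $D$ fails to be smooth at some point lying on a fiber $\pi^{-1}(P)$ with $\deg P$ large. Here one cannot enumerate; instead one needs an effective bound, uniform in $P$, on the fraction of sections singular at a given point of $X$, and then sum over all points lying in high-degree fibers. This is where the hypothesis $n \ge n_0(\dim X, \operatorname{char} \F_q)$ enters: one needs $nA$ by itself to separate points and tangent vectors well enough (a Poonen-type ``enough derivatives'' argument, with the $p$-th power obstruction in positive characteristic forcing $n_0$ to depend on the characteristic) so that singularity at a point is a codimension-$(1+\dim X)$ condition with a bound independent of how $dE$ behaves there.

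Concretely, the steps in order are: (1) reduce ``smooth'' to the pointwise Jacobian criterion and set up the probability space $\mathcal F_d$; (2) for a fixed closed point $P\in\P^M$ and the finite scheme $\pi^{-1}(P)$, prove the restriction map $\mathcal F_d \to H^0(\text{first-order neighborhood of }\pi^{-1}(P))$ is surjective for $d\gg 0$, hence compute the exact local probability $\rho_P := \Prob(D\text{ smooth on }\pi^{-1}(P))$ as $d\to\infty$; (3) prove $\prod_P \rho_P$ converges and is nonzero for $n$ large — convergence because $\rho_P = 1 - O(q^{-\deg P(1+\dim X)})$ or similar, nonvanishing because each $\rho_P$ is a product of finitely many local factors each bounded away from $0$ once $n\ge n_0$ (no fiber is forced entirely singular); (4) the low/medium/high sieve to show $\lim_{d\to\infty}\Prob(D\text{ smooth}) = \prod_P \rho_P$, with the high-degree tail controlled by the uniform singularity bound from the $n\ge n_0$ hypothesis. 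The main obstacle is step (4)'s high-degree bound: in Poonen's original proof the ambient $\P^N$ and the bundle both grow, so one has a lot of room; here $n$ is fixed and only $d$ grows, and $d E$ gives no positivity along fibers, so all the ``room'' needed to kill singularities on high-degree fibers must come from $nA$ alone — forcing the lower bound $n\ge n_0$ and requiring a careful characteristic-sensitive count of the derivatives of sections of $nA$.
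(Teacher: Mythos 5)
Your step (2) rests on a claim that is actually false, and recognizing its failure is the crux of the theorem. You assert that the restriction of $H^0(X,\cO(nA+dE))$ to a first-order neighborhood of the fiber $\pi^{-1}(P)$ becomes surjective for $d\gg 0$ ``since $nA+dE$ is very ample.'' Very ampleness gives surjectivity onto $H^0(Q^{(2)},\cdot)$ for a \emph{single} closed point $Q$; to surject onto a fattened neighborhood of the whole fiber you would need the relevant $H^1$ to vanish, and this requires twisting by something that is positive \emph{along the fiber}. But $\cO(dE)$ is pulled back from $\P^M$ and hence restricts trivially on $\pi^{-1}(P)$ --- as you yourself observe --- so increasing $d$ provides no help, and the obstruction simply persists. (This is exactly the phenomenon in the introduction's bidegree-$(2,d)$ example: three second-order vanishing conditions on a single vertical fiber force the whole fiber to be nonreduced, precisely because the image of the restriction map is too small.) What is true is that the image \emph{stabilizes} as $d\to\infty$, to the image of $H^0(W,\pi_*\cO_X(nA)\otimes\cO_W)\to H^0(X_W,\cO_{X_W}(nA))$, which is in general a proper subspace; the local probability is then a ratio inside that stable image, and the product decomposition over low-degree fibers comes from showing that this stable image factors as a direct sum over connected components of $W$, not from joint surjectivity. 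You also call $\pi^{-1}(P)$ a ``finite scheme,'' but since $E$ is only globally generated, fibers can be positive-dimensional (e.g.\ the exceptional $\P^1$ of a blow-up).

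The second gap is that you identify the high-degree tail as the hard step and as the place where $n\ge n_0$ is needed, but in this setting the high-degree estimate actually works for all $n\ge 1$, while the medium-degree estimate is where $n_0=\max(b(m+1)-1,\,bp+1)$ comes from and is the hardest part of the argument. The difficulty is a new one with no analogue in Poonen: a closed point $Q\in X$ with $\deg(\pi(Q))$ in the medium range can have $\deg(Q)/\deg(\pi(Q))$ arbitrarily large. For such $Q$ the image of $S_{n,d}\to H^0(Q^{(2)},\cdot)$ is \emph{not} all of $H^0(Q^{(2)},\cdot)$ because the $\P^N$-degree $n$ is fixed, so the naive $q^{-\deg(Q)(m+1)}$ singularity bound is unavailable; one must instead run a bigraded version of Poonen's decoupling of $f$ from its partial derivatives, and carefully balance the count of such points against the weaker bound $q^{-\deg(\pi(Q))\min(\lfloor (n-1)/p\rfloor+1,\,\ldots)}$ that decoupling gives. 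Making those bounds beat the Lang--Weil count $O(q^{b\cdot\deg(\pi(Q))})$ of base points of each degree is precisely what forces $n\ge n_0$. Your sketch treats the medium range as routine (``summably small'') and assigns the $n_0$ hypothesis to the wrong case, so the real work is missing.
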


\paragraph{{\bf 1.}}  When $E$ is not very ample, each fiber of $\pi$ may consist of many points and the fibers  may have different dimensions.  As in the examples considered above, singularity at points of a single fiber of $\pi$ will generally be dependent but the theorem shows that this is the only dependence as $d\to \infty$.

 \smallskip

\paragraph{{\bf 2.}}  
Our result specializes to Poonen's Bertini Theorem \cite[Theorem~1.1]{poonen} in the case where $A=E$.  One minor difference in that case is that Poonen works
with sections defined on the ambient projective space whereas we work with sections defined on $X$. However, for $d\gg 0$, there is a natural surjection $H^0(\P^M,\cO(d))\to H^0(X,\cO(dA))$, and hence this difference does not affect the asymptotics.  

At the opposite extreme, if $E=\cO_X$ then the equality is trivial, and the statement that the product is non-zero for $n\gg 0$ follows from Poonen's Bertini Theorem.

 \smallskip
 
\paragraph{{\bf 3.}} 
In characteristic $0$, the Bertini theorem for semiample divisors implies that a general section $D\in |dE|$ is smooth~\cite[Theoreme~6.10]{jouanolou}.  This can fail in positive characteristic, even over an algebraically closed field, without adding enough additional ampleness (or, alternately, adding separability hypotheses).  For instance, in characteristic $3$, every geometric vertical fiber of the smooth quasi-elliptic surface
$
V(y^2zs=x^3s-tz^3) \subseteq \P^2 \times \P^1,
$
is singular.\footnote{We thank Laurent Moret--Bailly for this example, which appeared on {\tt mathoverflow.net}~\cite{lmb-mo}.}  
 
 \smallskip
 
\paragraph{{\bf 4.}}  
If $n$ is not sufficiently large, then some factors in Theorem~\ref{T:bertini} may equal zero.  In fact, for any $m>0$, we construct a smooth variety $X$ of dimension $m$ that provides counterexamples to Bertini under (nondegenerate) embeddings into arbitrarily large projective spaces.  This yields a negative answer to a question posed by Baker and
given in \cite[Question~4.1]{poonen}.  See \S\ref{subsec:anti bertini}.
 \smallskip
 
\paragraph{{\bf 5.}}
Although for a given ${P\in  \P^M}$, the local probability $$\Prob(D \textrm{ is smooth at all points of }\pi^{-1}(P))$$ is a priori a limit in $d$,
this limit in fact stabilizes for sufficiently large $d$ (Lemma~\ref{lem:independence} \eqref{item:stabnod})
as long as $\dim \pi(X)>0$.  More concretely, if $P^{(2)}$ is the first-order infinitesimal neighborhood of a point $P$, then for \emph{any} $d$ that is at least
the Castelnuovo--Mumford regularity of  the sheaf $\pi_*(\cO_X(nA))\otimes \cO_{P^{(2)}}$ on $\P^M$, we have the equality:
\[
1-\Prob\left(\begin{matrix}D \textrm{ is smooth at } \\ \textrm{ all points of }\pi^{-1}(P)\end{matrix}\right)
=
\frac{
\#\left\{\begin{matrix}f\in H^0(X,\cO_{X}(nA+dE)) \textrm{ where } \\  f|_{Q^{(2)}}=0 \text{ for some } Q\in\pi^{-1}(P) \end{matrix}\right\}
}{\#H^0(X,\cO_{X}(nA+dE))}
\]
We give sample computations of these probabilities in Example~\ref{E:smoothinP1xP1}.

\smallskip
 
\paragraph{{\bf 6.}}
Our result depends on the choice of both $E$ (which determines the fibers), and the choices of $A$ and $n$ (which affect the local probability at a given fiber).  Poonen's Bertini Theorem did not depend on the choice of embedding $X\subseteq \P^N$, and the analogue here is that Theorem~\ref{T:bertini} depends only on the image $\pi(X)$ of the map determined by $E$.

\smallskip

\paragraph{{\bf 7.}}  Our proof yields an explicit bound for $n_0$.  For instance, we may set
\[
n_0=\max((\dim X+1)\cdot \dim \pi(X) - 1, (\dim \pi(X))\cdot \operatorname{char}(\F_q)+1).
\]
See~\eqref{eqn:choosing n0} in the proof of Lemma~\ref{L:med}.  Better bounds exist in some cases, as discussed in~\S\ref{S:better bounds}.

\medskip

We prove Theorem~\ref{T:bertini} in \S\ref{S:main theorem} as a special case of the more versatile Theorem~\ref{T:taylor} that allows for prescribed
behavior at finitely many closed points.
Although we follow the broad outline of Poonen's sieving proof of \cite[Theorem~1.2]{poonen}, our proof differs in essential ways.  
Poonen separates the points of $X$ into low, medium, and high degree points,
and his analysis of each case relies crucially on ampleness to obtain surjectivity of a map of sections.
For instance, Lemma 2.1 of \cite{poonen} is the key to the low and medium degree arguments.  The high degree case is the most difficult
part of Poonen's proof, and that argument relies on ampleness (for~\cite[Lemma~2.5]{poonen}), plus a very clever use of characteristic $p$ to decouple the vanishing sets of a function and its derivatives.

Since we have only semiampleness, we cannot rely on surjectivity, and it is thus a more subtle problem to describe or estimate the image of the analogous map
(see Lemmas~\ref{lem:independence} and \ref{L:sizeofimage}).
Further, our analysis separates the points of $\pi(X)$
(instead of $X$) into low, medium, and high degrees, and thus new possibilities arise such as the existence of high degree points in $X$ whose image in $\pi(X)$ is a medium degree point. 
 This significantly complicates the medium degree argument, which was one of the simpler parts of 
\cite{poonen} but is the hardest part of our proof.  For our medium and high degree arguments, we extend Poonen's decoupling idea to a bigraded (and semiample) setting, but in our situation the decoupling must be combined with several new ideas to provide sufficiently good bounds.
For example, we have to keep track of separate bounds for points with each degree image under $\pi$ in order
to balance the large number of points and their small chance of being singular points.  (See also the discussion
before Lemma~\ref{L:2kindshigh}.)
We then apply the resulting estimates in three different ways in our medium degree, high degree, and convergence arguments.

Poonen's Bertini Theorem has been generalized and applied in several settings.
  Some of these generalizations or other related results include~\cite{bucur-kedlaya,nguyen,poonen-sieve,poonen-given-sub,vakil-wood}.
 Several recent results have applied Poonen's Bertini Theorem with Taylor coefficients~\cite[Theorem~1.2]{poonen} to compute the distribution of the number of ($\F_q$) points on families of smooth varieties.  For instance,~\cite{bdfl-planar} uses an effective version of this result for $\P^2$ to compute the distribution of the number of points on a smooth plane curve of degree $d$ as $d\to \infty$.   In addition, Bucur and Kedlaya generalize \cite[Theorem~1.2]{poonen} to the case of complete intersections~\cite[Theorem~1.2]{bucur-kedlaya}, and they use this to compute the distribution of the number of points on  smooth complete intersections in $\P^n$ (as a limit as the defining degrees of the complete intersection go to $\infty$)~\cite[Corollary~1.3]{bucur-kedlaya}.  There has been a lot of other recent activity related to asymptotic point counting on curves over a fixed finite field, where the genus goes to $\infty$~\cite{bezerra-etc,bdfl-trigonal,bdfl-pfold,elkies-etc,kresh-etc,kurlberg-rudnick,Kurlberg-Wigman-Gaussian,li-maharaj,temkine,wood-trigonal}.

In a similar vein, we apply Theorem~\ref{T:taylor} to compute the distribution of the number of points on smooth
curves in several bidegree families on Hirzebruch surfaces (including $\P^1\times\P^1$) as $d\ra \infty$ (Theorem~\ref{T:Hpoints}).  For example,
 as $d\to \infty$, we show in Corollary~\ref{cor:avg 3d} that the average number of points on a smooth curve of type 
$(2,d)$ in $\P^1\times \P^1$ is $q+2+\frac{1}{q^3+q^2-1}$.

Along the way, in Theorem~\ref{T:Hsmooth}, we compute the probability that a curve of type $(n,d)$ in a Hirzebruch surface is smooth, for any fixed $n\geq 1$, as $d\ra\infty$.
In the case $n=3$, this counts smooth trigonal
curves in a \emph{single} Hirzebruch surface, as compared the theorem
of Datskovsky and Wright \cite{datskovsky-wright}, which counts all smooth trigonal curves,
that is, smooth trigonal curves in \emph{all} Hirzebruch surfaces, by
counting the function fields of the curves using an adelization of
Shintani zeta functions.
Y. Zhao~\cite{zhao} has proven a
new error bound for the Datskovsky-Wright theorem using geometric
methods, and in particular by relating singular curves in one
Hirzebruch surface to smooth curves in another.

\subsection{Outline of paper}

This paper is organized as follows. \S\ref{S:notation} outlines the basic notation and setup. \S\ref{S:main theorem} states Theorem~\ref{T:taylor}, which is a variant of Theorem~\ref{T:bertini} that allows for prescribed behavior at a finite number of points; we also state several overarching lemmas on low/medium/high degree singularities and we show that these lemmas imply Theorems~\ref{T:bertini} and~\ref{T:taylor}.  \S\ref{S:examples} contains example applications of the result.  \S\ref{S:setup} and \S\ref{S:lmh} contain the technical heart of the paper, as we prove a collection of lemmas that yield asymptotic independence among the fibers of $\pi$ and provide control over the behavior of the low/medium/high degree singularities, eventually proving the lemmas stated in \S\ref{S:main theorem}.  In \S\ref{S:convergence} we prove the convergence of the product and in \S\ref{S:better bounds} we discuss special cases where we can improve the bound on $n_0$.  Finally, \S\ref{S:applications} contains a number of further applications of the result including the point counting applications mentioned above.

\section*{Acknowledgements}
We thank Alina Bucur, Kiran Kedlaya, Rob Lazarsfeld, Bjorn Poonen, Kevin Tucker, and Ravi Vakil for useful conversations.
The first author was supported by a National Science Foundation fellowship and by a Simons Foundation Fellowship.
The second author was supported by an American Institute of Mathematics Five-Year Fellowship and National Science Foundation grant DMS-1147782.

\section{Notation}\label{S:notation}
Let $X$ be a projective variety of dimension $m$ over $\F_q$, with a very ample divisor $A$ and a globally generated divisor $E$.  We follow the convention that a variety over $\F_q$ is integral over $\F_q$, but not necessarily geometrically integral.
We let $p=\operatorname{char}(\F_q)$ and consider the maps
\[
\iota\colon X \overset{|A|}{\longrightarrow}\P^N \quad \text{ and } \quad \pi\colon X \overset{|E|}{\longrightarrow} \P^M
\]
given by the complete linear series on $A$ and  $E$, respectively.  We let $B=\pi(X)$ and $b=\dim B$.  
Since $A$ is very ample, we have $X\subseteq \P^N$ and also $X \subseteq \P^N \times \P^M$. 
For subvarieties $Y\subseteq X$, we use $\deg_A Y$ to denote the degree of $Y\subseteq \P^N$.

If $\dim\pi(X)>0$ and if $W$ is a $0$-dimensional subscheme of $\P^M$, then $\cO_{\pi^{-1}(W)}(E)$ is trivial on each 
component of $\pi^{-1}(W)$ and thus on $\pi^{-1}(W)$.
While there is no natural choice of trivialization, we 
can pick one, e.g. by dividing by a non-vanishing $\P^M$ coordinate for each component of $W$, and
we do this without further remark.  

We use $O$ notation in the paper, and the constant in the $O$ notation is a function of $X,E,A$.
For two functions $f_{X,E,A}, g_{X,E,A} : \mathbb{N}^s \ra \mathbb{R}$, we write $f= O(g)$ if there exists some positive real $c_{X,E,A}$ such that $f_{X,E,A}\leq c_{X,E,A} g_{X,E,A}$ for all values in $\mathbb{N}^s$.

We let $R_{n,d}:=H^0(X,\cO_X(nA+dE))$.  For a section $f\in R_{n,d}$ we use the notation $H_f$ for the corresponding divisor in $|nA+dE|$. 
  We let $S_{n,d}\sub \F_q[s_1,\dots,s_N,t_1,\dots,t_M]$ be the polynomials of degree
at most $n$ in the $s_i$ and at most $d$ in the $t_i$.  We will take affines $\A^N\sub \P^N$ and $\A^M\sub \P^M$, and use the
$s_i$ and the $t_i$ as their coordinates, respectively.  We may identify $S_{n,d}$ with $H^0(\P^N\times \P^M, \cO(n,d))$ in the natural way.

Fix a closed point $P\in B$.
If $f\in R_{n,d}\setminus\{0\}$, then $H_f$ is smooth at all points of $\pi^{-1}(P)$ 
if and only if $f$ does not vanish on any first order infinitesimal neighborhood of a closed point of $\pi^{-1}(P)$.  We thus introduce the following notation: for a point $Q$ in a scheme $Y$, we will use the notation $Q^{(2)}$ to denote the first order infinitesimal neighborhood.  For instance, with $P\in B$, $P^{(2)}:= \Spec(\cO_{B,P}/\mathfrak m_P^2)$.  More generally, if $W\subseteq Y$ is a smooth zero-dimensional scheme with irreducible components $P_1, \dots, P_s$, then we set $W^{(2)}=\cup_{i=1}^s P_i^{(2)}$.
We also set $X_{W^{(2)}}:=X\times_B W^{(2)}$ and $X_W:=X\times_B W$.

We will sometimes discuss the (Castelnuovo--Mumford) regularity of a sheaf $\mathcal F$ on $X$ with respect to the very ample bundle $A$.  We denote this as $\reg_A \mathcal F$.  The main property that we will use about regularity is that it provides an effective bound on Serre vanishing, i.e. $H^i(X,\mathcal F(nA))=0$ for all $i>0$ and $n\geq \reg_A \mathcal F-1$.  See~\cite[\S4]{eisenbud-syzygies} or \cite[\S1.8]{lazarsfeld-positivity} for background.

\section{Main theorem}\label{S:main theorem}
We will prove Theorem~\ref{T:bertini} as a special case (where $Z=\emptyset$) of Theorem~\ref{T:taylor} given below.  Like Poonen's~\cite[Theorem~1.2]{poonen}, this version is more versatile for applications.  For instance, see \S\ref{subsec:point counting} below for applications to point counting for smooth curves on Hirzebruch surfaces.

 When we say that $f$ is a \defi{random section of $R_{n,d}$ as $d\to \infty$}, for any set $\mathcal P$, we define
\[
\Prob(f\in \mathcal P):=\lim_{d\to \infty} \Prob(f_d\in \mathcal P)
\]
where $f_d$ is a uniform random section in $R_{n,d}$.

\begin{theorem}[Semiample Bertini with fiberwise Taylor coefficients]\label{T:taylor}
Let $X$ be a projective variety over $\F_q$, with a very ample divisor $A$ and a globally generated divisor $E$.  Let $Z\subsetneq \pi(X)$ be a finite subscheme of $\pi(X)$.
  Assume that $X \setminus \pi^{-1}(Z)$ is smooth and let $n_{0}:=\max(b(m+1)-1,bp+1)$, with the constants as defined in \S\ref{S:notation}.

For all $n\geq n_0$ and for all $T\subseteq H^0(\pi^{-1}(Z),\cO_{\pi^{-1}(Z)}(nA))$, and for a random  section $f\in R_{n,d}$ as $d\ra\infty$, we have
\[
\Prob\left(\begin{matrix} H_f \cap (X \setminus \pi^{-1}(Z)) \text{ is} \\ \text{ smooth and } f|_{\pi^{-1}(Z)}\in T\end{matrix} \right)=
\Prob(f|_{\pi^{-1}(Z)}\in T)
 \prod_{P\in \pi(X)\setminus Z} \Prob\left(\begin{matrix} H_f \text{ is smooth at}\\ \text{ all points of } \pi^{-1}(P)\end{matrix}\right).
\]
The product over $P\in \pi(X)\setminus Z$ converges, is zero only if some factor is zero, and is always non-zero for $n$ sufficiently large.
\end{theorem}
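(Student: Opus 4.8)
The plan is to follow Poonen's sieving strategy, but carried out over the base $B=\pi(X)$ rather than over $X$ itself, and to reduce everything to the three overarching lemmas on low, medium, and high degree singularities stated in this section. First I would set up the sieve: for a section $f\in R_{n,d}$, call $f$ \emph{bad at} $P\in B$ if $H_f$ fails to be smooth at some point of $\pi^{-1}(P)$, and stratify the closed points $P\in B\setminus Z$ by degree into low (degree $\le r$), medium ($r<\deg P\le d/(m+1)$, say, up to the usual Poonen-type cutoff), and high (degree exceeding that cutoff). The target equality will follow by letting $r\to\infty$ after $d\to\infty$: for fixed $r$, the low-degree fibers are finitely many and, by the independence/stabilization input (Lemma~\ref{lem:independence}), the events ``$f$ is smooth at all points of $\pi^{-1}(P)$'' for distinct low-degree $P$ become independent as $d\to\infty$, with each probability given exactly by the stabilized local factor; this yields the finite product $\prod_{\deg P\le r}\Prob(\cdots)$ together with the conditioning on $f|_{\pi^{-1}(Z)}\in T$. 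The medium- and high-degree lemmas (the analogues of Poonen's Lemmas~2.4 and~2.6) show that the probability $f$ is bad at \emph{some} $P$ of degree $>r$ goes to $0$ as $r\to\infty$ (uniformly in $d$, after $d\to\infty$), so the tail contributes nothing and the finite product converges to the infinite product.

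The convergence and nonvanishing claims I would extract as follows. Each local factor satisfies $\Prob(H_f\text{ smooth at all points of }\pi^{-1}(P))=1-O(q^{-c\deg P})$ for a suitable positive constant $c$ depending on $m,b$ — this is precisely the kind of estimate the medium/high degree analysis produces (keeping track, as the introduction emphasizes, of separate bounds indexed by $\deg P$ so that the number of degree-$e$ points, which is $O(q^{be})$, is beaten by the singular probability). Summing $\sum_{P\in B\setminus Z} q^{-c\deg P} < \infty$ once $n$ is large enough that $c$ exceeds the growth rate $b$ of point counts; this simultaneously gives absolute convergence of the product and shows that only finitely many factors can be close to $0$. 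Since a convergent product of numbers in $[0,1]$ vanishes iff some factor vanishes, the ``zero only if some factor is zero'' claim is immediate. For nonvanishing for $n\gg0$: each factor is $\ge$ the probability, computed over the full ambient linear system on $X$, that a section avoids vanishing on $Q^{(2)}$ for each $Q\in\pi^{-1}(P)$; by the effective surjectivity/regularity statements available once $n\ge n_0$ (and enlarging $n$ further), each such factor is bounded below by a fixed positive quantity, and the convergent product of such bounded-below factors is positive. Concretely I would invoke the explicit local factor formula of Remark~5 together with a regularity bound to see the factor is positive, and use the tail estimate above to control the product of the infinitely many factors.

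For the medium-degree argument — which the authors flag as the hardest part — the key new obstacle, and where I'd spend the most care, is exactly the phenomenon that a medium-degree point $P\in B$ can have a high-degree point $Q\in\pi^{-1}(P)\subseteq X$ lying over it, so one cannot simply transport Poonen's medium-degree count for $X$. My plan there is to extend Poonen's characteristic-$p$ decoupling trick to the bigraded setting: write $f=f_0+\sum f_i\,(\text{a }p\text{-th power in the fiber direction})+\cdots$ and use the $d$-direction ampleness (this is where $n\ge bp+1$ and the Frobenius-twist argument enter) to decouple the vanishing locus of $f$ from those of its fiberwise partial derivatives, obtaining independence of the relevant conditions at the points $Q$ over $P$. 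One then sums the resulting per-point singular probabilities against the number of points of each degree $e$ in $\pi^{-1}(P)$; the bound $n\ge b(m+1)-1$ is what forces the geometric series to converge in the range where $Q$ is high but $P$ is only medium. The high-degree argument is then a further, more standard iteration of the same decoupling, and the convergence section packages the per-$P$ tail estimate. I expect the medium-degree decoupling-and-bookkeeping to be the crux; the convergence and nonvanishing assertions in the statement, by contrast, should fall out cleanly once the tail estimate $1-\Prob(\text{smooth at }\pi^{-1}(P))=O(q^{-c\deg P})$ with $c>b$ is in hand.
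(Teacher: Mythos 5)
Your proposal follows exactly the paper's strategy: sandwich the target probability between $\Prob(f\in\mathcal P^{\text{low}}_{e_0,n,T})$ and the same quantity minus the medium- and high-degree error terms, then apply the three overarching lemmas (Lemma~\ref{L:low approx} via the independence/stabilization statement Lemma~\ref{lem:independence}, Lemma~\ref{L:med} via the bigraded decoupling of Lemma~\ref{L:2kindshigh}, and Lemma~\ref{L:high}), and take $e_0\to\infty$; the convergence and nonvanishing claims are handled separately as Propositions~\ref{P:converges} and~\ref{P:notzero}. Your sketch of the nonvanishing step is a bit looser than the paper's (a uniform positive lower bound on the factors is neither needed nor proven; the paper instead shows each factor is individually nonzero via a low/medium/high split \emph{within} each fiber $X_P$, then combines this with the tail estimate $\sum_P S_P<\infty$), but this is a matter of precision rather than a different route, and the overall decomposition, the choice of sieving over $B=\pi(X)$ rather than $X$, and the identification of the medium-degree decoupling as the crux all match the paper's proof.
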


In Remark 5 after Theorem~\ref{T:bertini}, we noted that the factors in the product on the right stabilize for $d\gg 0$.
The same is true for $\Prob(f|_{\pi^{-1}(Z)}\in T)$. 
More precisely, 
the image of
$\phi_{d}\colon H^0(X,\cO_X(nA+dE))\to H^0(\pi^{-1}(Z),\cO_{\pi^{-1}(Z)}(nA))$ stabilizes for $d\gg  0$ 
to an image $I$ which is given in Lemma~\ref{lem:independence}, and the factor $\Prob(f|_{\pi^{-1}(Z)}\in T)$ then equals the fraction $\frac{\#(T\cap I)}{\# I}.$  More explicit descriptions of how to compute the local factors appear in \S\ref{subsec:sections from PNPM}.

The following lemmas (proven in Section~\ref{S:lmh}) control the probability of singularity in fibers $\pi^{-1}(P)$ for $P$ of low, medium, and high
degree, respectively.  As in the proof of the main results of~\cite{poonen}, the key to our sieving argument is to prove that singularities of medium or high degree occur so infrequently that they are irrelevant to the asymptotics.
 For some $e_0\geq 1$ (which will eventually go to $\infty$), we define
\[
\mathcal P^{\text{low}}_{e_0,n,T}:=\bigcup_d \left\{ \begin{matrix} f\in R_{n,d} \text{ where } H_f \text{ is smooth at all points } \\ Q\in X \setminus \pi^{-1}(Z) \text{ with } \deg(\pi(Q))< e_0, \text{ and } f|_{\pi^{-1}(Z)}\in T
\end{matrix} \right\},
\]
\[
\mathcal Q^{\text{med}}_{e_0,n}:=
\bigcup_d \left\{ \begin{matrix} f\in R_{n,d} \text{ where } H_f \text{ is singular at some } \\ Q\in X \setminus \pi^{-1}(Z) \text{ with } \deg(\pi(Q))\in [e_0,\tfrac{d}{\max(M+1,p)}]
\end{matrix} \right\},
\]
and
$$
 \calQhigh_{n} :=
 \bigcup_d \left\{ \begin{matrix} f\in R_{n,d} \text{ where } H_f \text{ is singular at some } \\ Q\in X \setminus \pi^{-1}(Z) \text{ with } \deg(\pi(Q))>\tfrac{d}{\max(M+1,p)}
\end{matrix} \right\}.
$$

\begin{lemma}\label{L:low approx}
For any $n$, we have
\[
\Prob(f\in \mathcal P^{\text{low}}_{e_0,n,T})
=
\Prob(f|_{\pi^{-1}(Z)}\in T)
\prod_{\substack{P\in B \setminus Z\\ \deg(P)< e_0}} \Prob\left(\begin{matrix} H_f \text{ is smooth at}\\ \text{ all points of } \pi^{-1}(P)\end{matrix}\right).
\]
\end{lemma}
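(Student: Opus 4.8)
The plan is to reduce the event ``$f\in\mathcal P^{\text{low}}_{e_0,n,T}$'' to a condition on the image of $f$ under a single restriction map to a finite-type scheme $Y$ supported over finitely many fibers of $\pi$, to observe that this condition is a product condition over those fibers, and then to use that the image of the restriction map stabilizes (and decomposes as a product) as $d\to\infty$. I may assume $\dim\pi(X)>0$ throughout, the case $E=\cO_X$ being trivial.

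\emph{Step 1: reduction to a restriction map.} Let $W':=Z\sqcup\bigsqcup_{P\in B\setminus Z,\ \deg P<e_0}P^{(2)}$, a zero-dimensional subscheme of $B$ whose pieces have pairwise disjoint support, and put $Y:=X\times_B W'=\pi^{-1}(Z)\sqcup\bigsqcup_P X_{P^{(2)}}$. Since $\cO(E)$ restricted to a zero-dimensional scheme is trivial, after the trivializations fixed in \S\ref{S:notation} restriction defines a map $\phi_d\colon R_{n,d}\to H^0(Y,\cO_Y(nA))=H^0(\pi^{-1}(Z),\cO(nA))\times\prod_{P}H^0(X_{P^{(2)}},\cO(nA))$, and each of these is a finite set since the schemes are proper over $\F_q$. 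The key point is that membership of $f\in R_{n,d}$ in $\mathcal P^{\text{low}}_{e_0,n,T}$ depends only on $\phi_d(f)$: since $X$ is smooth off $\pi^{-1}(Z)$, for a closed point $Q\in\pi^{-1}(P)$ with $\deg P<e_0$ the divisor $H_f$ is smooth at $Q$ iff $f|_{Q^{(2)}}\neq 0$; and because $\mathfrak m_P^2\cO_{X,Q}\subseteq\mathfrak m_Q^2$ (as $\pi(Q)=P$), the scheme $Q^{(2)}$ is a closed subscheme of $X_{P^{(2)}}$, so $f|_{Q^{(2)}}$ is read off from $f|_{X_{P^{(2)}}}$. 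Letting $U_P\subseteq H^0(X_{P^{(2)}},\cO(nA))$ be the set of sections vanishing on no such $Q^{(2)}$, we obtain $\mathcal P^{\text{low}}_{e_0,n,T}\cap R_{n,d}=\phi_d^{-1}\big(T\times\prod_P U_P\big)$, and, since $\phi_d$ is $\F_q$-linear, $\Prob(f_d\in\mathcal P^{\text{low}}_{e_0,n,T})=\#\big((T\times\prod_P U_P)\cap\im\phi_d\big)\big/\#\im\phi_d$ for every $d$.

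\emph{Step 2: stabilization and product structure of $\im\phi_d$.} I would isolate as a lemma — this is Lemma~\ref{lem:independence} — that for $d\gg 0$ the subgroup $\im\phi_d$ is independent of $d$ and equals $I_Z\times\prod_P I_P$, with $I_Z$ and $I_P$ the (likewise stable) images of $R_{n,d}$ under restriction to $\pi^{-1}(Z)$ and to $X_{P^{(2)}}$. The argument pushes everything down to $\P^M$: by the projection formula $R_{n,d}=H^0(\P^M,\pi_*\cO_X(nA)\otimes\cO(d))$ and $H^0(Y,\cO_Y(nA))=H^0(\P^M,\pi_*\cO_Y(nA)\otimes\cO(d))$, with $\phi_d=H^0(\P^M,\alpha\otimes\cO(d))$ for $\alpha\colon\pi_*\cO_X(nA)\to\pi_*\cO_Y(nA)$ the pushforward of restriction. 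Now $\pi_*\cO_Y(nA)$ is supported on the finite set $\pi(Y)$, so its global sections (and those of any subsheaf) are unchanged by $\otimes\cO(d)$; and from $0\to\ker\alpha\to\pi_*\cO_X(nA)\to\im\alpha\to 0$, Serre vanishing on $\P^M$ gives $H^0(\pi_*\cO_X(nA)\otimes\cO(d))\twoheadrightarrow H^0(\im\alpha\otimes\cO(d))$ once $d$ is past the Castelnuovo--Mumford regularity of $\ker\alpha$, so $\im\phi_d$ equals the global sections of $\im\alpha$ for $d\gg 0$. Finally, $\im\alpha$ is a subsheaf of $\pi_*\cO_Y(nA)$, whose stalks at the distinct closed points of $\pi(Y)$ give a direct sum decomposition; any subsheaf decomposes compatibly, so $\im\alpha$ is the direct sum of the images of the components of $\alpha$, that is $\im\alpha=I_Z\times\prod_P I_P$.

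\emph{Step 3: conclusion.} For $d\gg 0$, Steps 1 and 2 give
\[
\Prob(f_d\in\mathcal P^{\text{low}}_{e_0,n,T})=\frac{\#\big((T\times\prod_P U_P)\cap(I_Z\times\prod_P I_P)\big)}{\#(I_Z\times\prod_P I_P)}=\frac{\#(T\cap I_Z)}{\#I_Z}\cdot\prod_P\frac{\#(U_P\cap I_P)}{\#I_P}.
\]
Running the same bookkeeping for the individual restriction maps identifies $\#(T\cap I_Z)/\#I_Z=\Prob(f|_{\pi^{-1}(Z)}\in T)$ and $\#(U_P\cap I_P)/\#I_P=\Prob(H_f\text{ is smooth at all points of }\pi^{-1}(P))$, which is the claimed equality (and in particular shows the limit defining the left-hand side exists). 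I expect the only substantive point to be Step 2 — the asymptotic independence among the fibers of $\pi$ encoded in the product structure of the stable image. This is where semiampleness matters: since $E$ is not ample one cannot simply assert that $\phi_d$ is surjective (its image may be a proper subgroup), so the image must be tracked — the role of Lemmas~\ref{lem:independence} and~\ref{L:sizeofimage} — after which Steps 1 and 3 are routine.
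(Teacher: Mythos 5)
Your proof is correct and takes essentially the same approach as the paper's: it reduces to a restriction map onto $\pi^{-1}(Z)\sqcup\bigsqcup_P X_{P^{(2)}}$, invokes the stabilization and product decomposition of the image (Lemma~\ref{lem:independence}, whose proof your Step~2 reconstructs via pushing to $\P^M$ and Serre vanishing), and concludes. The only difference is that you spell out more explicitly why membership in $\mathcal P^{\text{low}}_{e_0,n,T}$ is determined by $f|_{X_{P^{(2)}}}$ (namely $\mathfrak m_P^2\cO_{X,Q}\subseteq\mathfrak m_Q^2$, so $Q^{(2)}\subseteq X_{P^{(2)}}$), a detail the paper leaves implicit.
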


\begin{lemma}\label{L:med}
For any $n\geq n_{0}$ (as defined in Theorem~\ref{T:taylor}),
 we have
\[
\lim_{e_0\ra\infty} \Prob\left(f\in \mathcal Q^{\text{med}}_{e_0,n}\right) = 0.
\]
\end{lemma}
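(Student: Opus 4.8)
The plan is to bound $\Prob(f \in \mathcal Q^{\text{med}}_{e_0,n})$ by summing, over closed points $P \in B \setminus Z$ with $\deg(P) \in [e_0, \tfrac{d}{\max(M+1,p)}]$, the probability that $H_f$ is singular at some point $Q \in \pi^{-1}(P)$, and then showing this sum goes to $0$ as $e_0 \to \infty$ uniformly in $d$. For a fixed $P$ of degree $e = \deg(P)$, singularity of $H_f$ at a closed point $Q \in \pi^{-1}(P)$ means $f$ vanishes on $Q^{(2)}$; I would estimate the number of $f \in R_{n,d}$ with this property by using the decoupling idea of Poonen, adapted to the bigraded semiample setting as described in the paper's introduction. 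Since $E$ is trivial on each fiber, I can trivialize $\cO(dE)$ on a small neighborhood and write $f$ in a way that separates its restriction to the fiber (controlled by $nA$) from its normal behavior (where the $dE$-direction contributes). The key input is Lemma~\ref{L:sizeofimage}, which controls the image of the relevant restriction map; because we have only semiampleness, this image is not all of $H^0(Q^{(2)}, \cO(nA))$, so I expect the bound to take the form $q^{-\alpha(e,n)} \cdot (\text{correction})$ where $\alpha(e,n)$ grows in $e$ once $n \geq n_0$.

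The structure of the argument: (1) For each $P$, use the standard two-step stratification — first impose that $f$ vanishes on the fiber $\pi^{-1}(P)$ scheme-theoretically (rare, probability roughly $q^{-(n+1)e}$-ish after accounting for the fiber length), and second, conditioning on $f$ not vanishing on the fiber, impose that $f$ has a singular point somewhere on the fiber. (2) The decoupling: split a basis of $R_{n,d}$ into sections whose restriction to $X_{P^{(2)}}$ is determined by low-degree-in-$d$ data, a "free" part coming from high powers of a coordinate $t_i$ (which exists precisely because $\deg(P) \leq \tfrac{d}{\max(M+1,p)}$, giving enough room), and use the $p$-th power / derivative trick to make the vanishing condition on $f$ and the conditions on its partials at points of the fiber independent. (3) Collect the resulting estimate into a bound $\Prob(H_f \text{ singular on } \pi^{-1}(P)) = O(q^{-c \cdot e \cdot n / b})$ or similar — the precise exponent is where the choice $n_0 = \max(b(m+1)-1, bp+1)$ enters (see~\eqref{eqn:choosing n0}), chosen exactly so that the exponent beats the number of degree-$e$ points of $B$, which is $O(q^{be})$. (4) Sum over $P$: $\sum_{e \geq e_0} O(q^{be}) \cdot O(q^{-c e n / b})$ is a geometric-type tail that tends to $0$ as $e_0 \to \infty$, uniformly in $d$, provided $cn/b > b$, i.e. provided $n \geq n_0$.

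The main obstacle is step (2)–(3): getting a bound on the singularity probability at a single fiber $\pi^{-1}(P)$ that is simultaneously (a) valid uniformly across the whole medium range $e \in [e_0, \tfrac{d}{\max(M+1,p)}]$, and (b) strong enough in its dependence on $e$ to defeat the point count $O(q^{be})$. This is exactly the place where the paper flags that the medium-degree argument — trivial in Poonen — becomes the hardest part here, because a fiber $\pi^{-1}(P)$ over a medium-degree point $P$ may itself contain points $Q$ of large degree in $X$, so one cannot directly quote a Poonen-style estimate and must instead carefully track separate contributions indexed by $\deg(Q)$ versus $\deg(\pi(Q))$, balancing the large number of such $Q$ against their small individual chance of being singular. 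I would isolate this as a clean intermediate estimate (presumably the content of Lemma~\ref{L:sizeofimage} together with an auxiliary counting lemma) and then the assembly in step (4) is a routine geometric-series bound.
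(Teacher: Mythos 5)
Your high-level inventory of ingredients is largely right — Lemma~\ref{L:sizeofimage}, the bigraded decoupling, the Lang--Weil count $\#B_k(\F_{q^e}) = O(q^{ek})$, and the observation that $n_0$ is chosen precisely so that the per-fiber singularity bound defeats the $O(q^{be})$ point count — and your closing paragraph correctly names the central difficulty: tracking $\deg(Q)$ against $\deg(\pi(Q))$. But the \emph{decomposition} you propose in step~(1) is not the one the paper uses, and it does not obviously work in general. You suggest stratifying by whether $f$ vanishes scheme-theoretically on the whole fiber $X_P$, and then conditioning on the complementary event. The problem is that ``$f$ does not vanish on $X_P$'' gives no control on \emph{where} in $X_P$ singular points of $H_f$ can sit, and in particular no upper bound on $\deg(Q)$ for singular $Q \in X_P$. (The paper does use a version of this ``contains the fiber as a component'' split, but only in the special case of Hirzebruch surfaces, Proposition~\ref{P:hirzebruch}, where an intersection-multiplicity calculation on $X_P \cong \P^1$ forces $\deg(Q)/\deg(P) \le ni/2$. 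That argument does not transfer to the general setting.) Also, your heuristic probability $q^{-(n+1)e}$ for vanishing on $X_P$ is not right when the fiber has positive dimension.

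The paper's actual proof of Lemma~\ref{L:med} stratifies by the \emph{relative degree} $\deg(Q)/\deg(\pi(Q))$, split at the threshold $\tfrac{n+1}{m+1}$. For $\deg(Q)/\deg(\pi(Q)) \le \tfrac{n+1}{m+1}$, the second clause of Lemma~\ref{L:sizeofimage} applied to $W = Q^{(2)}$ gives honest surjectivity onto $H^0(Q^{(2)},\cO_{Q^{(2)}})$, so the singularity probability at $Q$ is exactly $q^{-\deg(Q)(m+1)}$; summing these over a single fiber $X_P$ (via Lang--Weil on the fiber, whose components have controlled $A$-degree by Lemma~\ref{L:Global1}) gives Lemma~\ref{L:fiberbound}, i.e.\ $O(e\,q^{-e(k+1)})$ for $P \in B_k$ of degree $e$, which beats $\#B_k(\F_{q^e}) = O(q^{ek})$ with no constraint on $n$. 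For $\deg(Q)/\deg(\pi(Q)) > \tfrac{n+1}{m+1}$ one cannot fatten the point, and this is exactly where the decoupling Lemma~\ref{L:2kindshigh}\eqref{E:highovermed} enters with $j = e_0$ and $J = \lfloor\tfrac{n+1}{m+1}\rfloor + 1$; the resulting bound has exponent $e\bigl(b - \min(\lfloor\tfrac{n+1}{m+1}\rfloor+1, \lfloor\tfrac{n-1}{p}\rfloor+1)\bigr)$, and requiring this to be negative is precisely where $n_0 = \max(b(m+1)-1, bp+1)$ comes from. So the missing idea in your proposal is the split by relative degree — without it, you have no mechanism for controlling high-degree points in a medium-degree fiber, which is exactly the new phenomenon the paper flags as the hardest part of the argument.
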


\begin{lemma}
\label{L:high}
For any $n\geq 1$ we have 
 $$
\Prob\left( f\in \calQhigh_{n}\right)=0.
$$
\end{lemma}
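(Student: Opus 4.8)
The plan is to adapt Poonen's high-degree argument (\cite[Lemma~2.5 and following]{poonen}) to the bigraded, semiample setting, using his characteristic-$p$ decoupling trick in the bidegree $(n,d)$ world. Fix $n\geq 1$. We must show that the probability (as $d\to\infty$) that $H_f$ is singular at some point $Q\in X\setminus\pi^{-1}(Z)$ with $\deg(\pi(Q))>d/\max(M+1,p)$ is zero. Since $X\setminus\pi^{-1}(Z)$ is smooth and $\pi^{-1}(Z)$ is closed, it suffices to bound, for each $d$, the number of sections $f\in R_{n,d}$ that are singular at \emph{some} such high-$\pi$-degree point, divided by $\#R_{n,d}$, and show this tends to $0$. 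We stratify the points $Q$ by $e:=\deg(\pi(Q))$, ranging over $e>d/\max(M+1,p)$, and for each $e$ we must (i) bound the number of points $Q\in X$ with $\deg(\pi(Q))=e$ lying over a given closed point of $B$, and more importantly bound the number of closed points $P\in B$ of degree $e$ — this is $O(q^{be})$ — together with the number of points of $X$ above each — and (ii) bound the conditional probability that $H_f$ is singular at a fixed such $Q$.

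The heart is step (ii), where Poonen's decoupling idea enters. Write $f = f_0 + t_1 g_1 + \dots$ — more precisely, using the factorization of $S_{n,d}$ coming from the product structure $\P^N\times\P^M$, express a random $f\in R_{n,d}$ as a sum $f_0 + \sum_j h_j^p \cdot(\text{monomial in the }t_i\text{, }s_i\text{-part})$ after a change of variables that puts a chosen coordinate first, so that partial derivatives of $f$ with respect to the decoupled block of variables are \emph{independent} of $f_0$ and of each other. The bound $e > d/\max(M+1,p)$ is exactly what guarantees enough room (degree $\geq p$, respectively $\geq M+1$, in the relevant variables on the residue field $\F_{q^e}$ of $\pi(Q)$) to carry out this decoupling: one needs $d$ large enough compared to the number of coordinates ($M+1$) and to $p$ relative to $e$. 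Once decoupled, the condition that $H_f$ is singular at $Q$ becomes the vanishing of $f$ and of $m$ partial derivatives at $Q$ — that is, roughly $m+1$ independent linear conditions over $\F_{q^e}$, giving a conditional probability $O(q^{-(m+1)e})$ for a fixed $Q$, or after grouping the $O(q^{e})$ points above each $P$ of degree $e$ (and the bounded-degree fibers using $\deg_A$ controlled by $n$), a bound of $O(q^{be}\cdot q^{e}\cdot q^{-(m+1)e}) = O(q^{(b+2-(m+1))e})$ per degree $e$, summed over $e> d/\max(M+1,p)$; this is a tail of a geometric-type series and goes to $0$ as $d\to\infty$ provided the exponent is negative, i.e. $b+2-(m+1)<0$, which fails in general — so in fact one needs the sharper accounting that also uses the degree-$A$ bound on fiber length and the fact that singularity at $Q$ actually imposes conditions growing with $n$ as well. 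This matching of the number of points against their small chance of singularity is precisely the delicate balancing the authors flag before Lemma~\ref{L:2kindshigh}.

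I expect the main obstacle to be exactly this balancing in step (ii): unlike Poonen's ample case, the fiber $\pi^{-1}(P)$ of a high-degree point $P\in B$ may itself contain many points of $X$ (indeed, points of $X$ whose own degree is much larger than $e=\deg P$), so naive counting overcounts. The resolution will be to split the high-degree points of $X$ into two kinds according to whether $\deg_A Q$ (equivalently, the degree of $Q$ inside its fiber $\pi^{-1}(P)$) is small or large relative to $\deg\pi(Q)$, handle each with a separate decoupling — one decoupling in the $s_i$-block (using that $n\geq 1$ gives a little ampleness in the fiber direction) and one in the $t_i$-block — and sum the two resulting tails. For the kind where the fiber-degree is large, one exploits that $nA$ restricted to the fiber is ample there, essentially reducing to Poonen's original estimate fiber by fiber; for the kind where $\deg\pi(Q)$ dominates, the $t_i$-decoupling and the bound $\#\{P\in B:\deg P=e\}=O(q^{be})$ do the work. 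In both cases the conditional singularity probability carries enough negative powers of $q^{(\cdot)}$ to beat the point count once $d$ (hence the lower cutoff $d/\max(M+1,p)$) is large. Assembling: $\Prob(f\in\calQhigh_n)\leq \limsup_{d\to\infty}\sum_{e>d/\max(M+1,p)}(\text{negligible tail}) = 0$.
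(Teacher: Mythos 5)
Your proposal correctly identifies Poonen's characteristic-$p$ decoupling as the core of the high-degree argument, but the accounting you set up doesn't match how the paper (or Poonen's original) argument actually closes. You attempt to stratify by $e=\deg(\pi(Q))$, count points at each level $e$, and sum a tail over $e>d/\max(M+1,p)$; you then notice (correctly) that this runs into the balancing problem (the exponent $b+2-(m+1)$ need not be negative) and pivot to a two-kinds split by relative degree. But that split by $\deg Q/\deg\pi(Q)$ is what the paper needs for the \emph{medium}-degree case (Lemma~\ref{L:med}, which applies parts (2)--(3) of Lemma~\ref{L:2kindshigh} and Lemma~\ref{L:fiberbound}); the high-degree case doesn't need it, and you never actually carry the split to a conclusion.

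The point you are missing is that the high-degree case is \emph{not} handled by summing a singularity probability over the (potentially huge) set of points $Q$ with $\deg\pi(Q)$ large. Instead, the decoupling + B\'ezout argument (Lemma~\ref{L:2kindshigh}(1)) bounds, uniformly over all such $Q$ at once, the probability that \emph{some} such $Q$ is singular: after choosing $g_1,\dots,g_m$, the common vanishing locus $W_m$ of the decoupled partials has $O(n^m+d^m)$ irreducible components by B\'ezout, and for each component containing a high-degree point the image of the restriction map from $S_{\lfloor n/p\rfloor,\lfloor d/p\rfloor}$ has size at least $q^{\min(\lfloor d/p\rfloor+1,\,j)}$ by the first inequality of Lemma~\ref{L:sizeofimage} (which only needs $\deg\pi(V_k)\ge j$, not any relative-degree information). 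Multiplying gives the one-line bound $O\bigl((n^m+d^m)\,q^{-\min(\lfloor d/p\rfloor+1,\,d/\max(M+1,p))}\bigr)$, which tends to $0$ as $d\to\infty$ for every fixed $n\ge 1$ since both terms in the $\min$ grow linearly in $d$. There is no sum over $e$, no need for the Lang--Weil count $O(q^{be})$ of points of $B$, and no need for the fiberwise ampleness of $nA$ -- all of that machinery belongs to the medium-degree estimate. As written, your proof has a genuine gap: the tail sum you propose is not the right object, the per-point probability $O(q^{-(m+1)e})$ you quote is not available in the semiample setting (you can't surject onto $H^0(Q^{(2)},\cO)$), and the two-kinds strategy you sketch at the end is left unexecuted.
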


\begin{proof}[Proof of Theorem~\ref{T:taylor} from Lemmas~\ref{L:low approx}--\ref{L:high} and Propositions~\ref{P:converges} and \ref{P:notzero}]
For any $n$ and any $e_0$, we have that
\begin{align*}
\Prob(f\in \mathcal P^{\text{low}}_{e_0,n,T})
&\geq \Prob\left(\begin{matrix} H_f \cap (X \setminus \pi^{-1}(Z)) \text{ is} \\ \text{ smooth and } f|_{\pi^{-1}(Z)}\in T\end{matrix} \right)\\
&\geq \Prob(f\in \mathcal P^{\text{low}}_{e_0,n,T})-\Prob(f\in \mathcal Q^{\text{med}}_{e_0,n})-\Prob(f\in\mathcal Q^{\text{high}}_{n}).
\end{align*}
 Then for any $n\geq n_0$, applying Lemmas~\ref{L:low approx}, \ref{L:med}, and \ref{L:high} and taking the limit as
$e_0\ra\infty$ yields Theorem~\ref{T:taylor}.
The convergence and non-zero claims are proven in Propositions~\ref{P:converges} and \ref{P:notzero}.
\end{proof}

\begin{proof}[Proof of Theorem~\ref{T:bertini} from Theorem~\ref{T:taylor}]
If $\dim \pi(X)=0$ then the equality of probabilities is tautological; further, the existence of an $n_0$ giving a non-zero product follows from the corresponding statement in Theorem~\ref{T:taylor}.

So we assume $\dim \pi(X)\geq 1$.  Choose $Z=\emptyset$.  We then have
\[
\Prob\left(H_f \cap X \text{ is} \\ \text{ smooth}\right)=
 \prod_{P\in B} \Prob\left(\begin{matrix} H_f \text{ is smooth at}\\ \text{ all points of } \pi^{-1}(P)\end{matrix}\right).
\]
The linear series $|nA+dE|$ is the projectivization of the vector space $R_{n,d}$.  We have $\dim R_{n,d} = h^0(\pi(X),\pi_*(\cO_X(nA))\otimes \cO_{\P^M}(d))$, and since $\pi_*(\cO_X(nA))$ is torsion free and $\dim \pi(X)\geq 1$, the dimension of this vector space goes to $\infty$ as $d\to \infty$.  Hence the probability that $f=0$ goes to $0$ as $d\to \infty$.  It thus makes no difference in the asymptotic probabilities whether we consider divisors in the linear series or sections from $R_{n,d}$.
\end{proof}

\section{Applications}\label{S:examples}

In this section, we give a few examples and applications of our main result.

\begin{example}[Bigraded Bertini]\label{E:multigraded}
If $X\subseteq \P^{i}\times \P^{j}$ is any smooth subvariety, then smooth hypersurface sections of $X$ exist in bidegree $(n,d)$ for any 
$d\gg n \gg 0$.  This follows from Theorem~\ref{T:bertini} where $A:=\cO_X(1,1)$ and $E:=\cO_X(0,1)$.
\end{example}

\begin{example}[Bigraded Anti-Bertini]\label{E:multi anti}
Given any $N$, there exists a smooth, geometrically integral hypersurface $X\subseteq \P^{i}\times \P^{j}$ where all hypersurface sections of $X$ of bidegree $(n,d)$ are singular for $n\leq N$ and all $d\geq 0$.  
One application of this example is to provide a negative answer to Baker's open question \cite[Question 4.1]{poonen}.  See \S\ref{subsec:anti bertini}
 for details.
\end{example}

\begin{example}[$E$ very ample]\label{E:very ample}
If $E$ is very ample then the product of local probabilities in Theorem~\ref{T:bertini} equals $\zeta_X(m+1)^{-1}$, as  in~\cite[Theorem 1.1]{poonen}.
This is because for a closed point $Q\in X$ and $d$ sufficiently large, $H^0(X,\cO(nA+dE))\ra H^0(Q^{(2)},\cO_{Q^{(2)}})$ is surjective
and so the probability of singularity at $Q$ is $q^{-(m+1)\deg(Q)}$.
For instance, on $\P^i\times \P^j$, the probability that a hypersurface of bidegree $(d,d+k)$
 is smooth, for fixed $k$ and $d\ra\infty$, equals $\zeta_{\P^i\times \P^j}(i+j+1)^{-1}$.

 In addition, when $E$ is very ample (or even if $\pi$ is a isomorphism away from a finite number of positive dimensional fibers)  we can choose $n_0=1$ in Theorem~\ref{T:bertini} (see Proposition~\ref{P:n0is1}).  The key observation is that
the positive dimensional fibers can be subsumed into the low degree argument, and then the medium degree fibers 
can be dealt with in a manner similar to \cite[Lemma 2.4]{poonen}.
\end{example}

\begin{example}[Smooth curves in $\P^1\times\P^1$]\label{E:smoothinP1xP1}
The following table uses Theorem~\ref{T:bertini} to give the probability of smoothness for curves of various bidegress in $\P^1\times \P^1$.

\begin{center}
\renewcommand{\arraystretch}{1.5}
\begin{tabular}{ | c | c | } \hline
Bidegree & Smoothness Probability (as $d\to \infty$)\\ \hline
 $(13d+11,7d)$ & $(1-q^{-1})(1-q^{-2})^2(1-q^{-3})$ \\  \hline
 $(2,d)$ & $\prod_{P\in\P^1}\left(1-q^{-2\deg(P)}-q^{-3\deg(P)}+q^{-4\deg(P)}\right)$ \\  \hline
 $(9,d)$ & $(1-q^{-1})(1-q^{-2})^2(1-q^{-3})$ \\  \hline
\end{tabular}
\end{center}
The first row uses Example~\ref{E:very ample} and would be the same for any sequence of the form $nA+dE$ where $E$ is ample.  The second and third rows rely on computations for
local probabilities that appear in~\S\ref{subsec:point counting},
and the third row would be the same for $9$ replaced by any $n\geq 3$.  
The equality of the first and third probabilities is somewhat of a coincidence,
 and we do not expect that the second probability is a rational number.
 For example, if $q=2$ we have  $(1-q^{-1})(1-q^{-2})^2(1-q^{-3})=63/256$ and 
$\prod_{P\in\P^1}(1-q^{-2\deg(P)}-q^{-3\deg(P)}+q^{-4\deg(P)})\approx 0.2839863\dots$
\end{example}

\begin{example}[Pointless $n$-gonal curves]
We apply Theorem~\ref{T:bertini} with $X=\P^1\times \P^1$, $A=\cO(n,1)$, $E=\cO(0,1)$,
$Z$ the union of the $\F_q$ points in $\P^1$, and $T$ specifying that the curves should be smooth
at all points of $\pi^{-1}(Z)$ but not contain any degree $1$ points.  For any $n\geq 2$, this produces smooth $n$-gonal curves of arbitrarily large genus with no $\F_q$ points.  For $n=3$, combining this with a similar application of Theorem~\ref{T:bertini} on a Hirzebruch surface, we prove the existence of trigonal curves of genus $g$ with no $\F_q$ points for all $g\gg 0$.  (See the recent work \cite{stichtenoth} which proves there is some pointless curve of every sufficiently large genus,
and \cite{becker-glass} which proves there is a pointless hyperelliptic curve of every sufficiently large genus, as well as the related work \cite{howe-etc,stark}.)
\end{example}

\section{Lemmas}\label{S:setup}

In this section, we prove several lemmas which will be used to prove Lemmas~\ref{L:low approx}--\ref{L:high} in Section~\ref{S:lmh}.  The following bound on degrees will be needed in applications of B\'ezout's theorem and the Lang--Weil bound in later lemmas.

\begin{lemma}\label{L:Global1}
Let $P$ be a closed point of $B$.
There exists $d_1$ (a function of $X,A,E$) such that $d_1\deg(P)$ is at least the sum of the $A$-degrees of all of the irreducible components of $X_P$ (note that $X_P$ may fail to be equidimensional), for any $P\in B$.
\end{lemma}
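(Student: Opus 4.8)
The plan is to reduce the claim to a statement about a single projective morphism using Noetherian boundedness. First I would compactify: although $\pi\colon X\to B$ is already proper (as $X$ is projective), the point is that the $A$-degree of the fiber $X_P$ is controlled by a coherent-sheaf invariant that varies constructibly. Consider the product embedding $X\subseteq \P^N\times\P^M$ coming from $A$ and $E$ (as set up in \S\ref{S:notation}), so that $\deg_A Y$ for a subvariety $Y\subseteq X$ is its degree under the $\P^N$-projection. Then for a closed point $P\in B$ the scheme $X_P=X\times_B P$ sits inside $\P^N\times\{P\}\cong\P^N_{\kappa(P)}$, and the sum of the $A$-degrees of its irreducible components is bounded by the degree of the cycle $[X_P]$, i.e. by $\deg_{\P^N}(X_P)$ computed over $\kappa(P)$, divided appropriately to pass from $\F_q$-degree to geometric degree — this is where the factor $\deg(P)=[\kappa(P):\F_q]$ enters.

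The key step is then to bound $\deg_{\P^N}(X_P)$ uniformly in $P$. I would do this by intersection theory / flatness stratification: stratify $B$ into finitely many locally closed subsets over which $\pi$ is flat (generic flatness, applied repeatedly), so that on each stratum the Hilbert polynomial of the fibers with respect to $\cO_X(A)$ is constant, and in particular the leading coefficient — which computes the degree of the fiber — is constant. Since there are only finitely many strata, the maximum $A$-degree of a fiber over all strata is a finite number depending only on $X,A,E$; call the sum of these $d_1'$. Alternatively, and perhaps more cleanly, I would invoke a Bézout-type bound: $X\subseteq\P^N\times\P^M$ is cut out by finitely many multihomogeneous equations, and a fiber over $P$ is the intersection of $X$ with the linear space $\P^N\times\{P\}$, whose degree is bounded by the multidegree data of $X$ (the coefficients of its multigraded Hilbert polynomial), independent of $P$. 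Setting $d_1$ to be this bound (for $\F_q$-rational $P$, with a correcting factor to handle higher-degree points via base change to $\kappa(P)$ and the inequality $\deg_A(\text{component}) \ge \deg(P)\cdot \deg_{A,\text{geom}}(\text{component})$) finishes the argument.

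The main obstacle I anticipate is bookkeeping the relationship between the $A$-degree of a component of $X_P$ as an $\F_q$-variety versus its geometric degree after base change to $\kappa(P)$, and ensuring the bound is genuinely linear in $\deg(P)$ rather than, say, involving $\deg(P)$ in the constant. The point to be careful about is: an irreducible component $Y$ of $X_P$ over $\F_q$ has $\deg_A Y = \deg(P)\cdot(\text{geometric }A\text{-degree of a }\kappa(P)\text{-component})$ only when things split nicely; in general one uses that $Y_{\kappa(P)}$ is a union of Galois-conjugate components each of the same geometric degree, so $\deg_A Y$ (which by our convention is the degree of $Y$ in $\P^N_{\F_q}$, equal to $\deg(P)$ times the degree of $Y_{\kappa(P)}$ in $\P^N_{\kappa(P)}$) is at most $\deg(P)$ times the total geometric degree of the fiber of $X_{\kappa(P)}\to\Spec\kappa(P)$ over its closed point. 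That total geometric degree is exactly what the Hilbert-polynomial / Bézout bound controls uniformly, so summing over the finitely many components of $X_P$ gives the sum of $A$-degrees $\le d_1\deg(P)$ with $d_1$ depending only on $X,A,E$. The rest is routine.
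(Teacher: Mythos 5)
Your proposal starts from the same place as the paper's proof (a flattening stratification for $\pi$, hence finitely many Hilbert polynomials among the geometric fibers), and your bookkeeping of the factor $\deg(P)$ via base change to $\kappa(P)$ is fine. However, there is a real gap in the middle of the argument: you assert that ``the leading coefficient --- which computes the degree of the fiber --- is constant'' on each stratum and take this to control the sum of the $A$-degrees of \emph{all} irreducible components of $X_P$. The leading coefficient of the Hilbert polynomial only bounds the degree of the top-dimensional part, and the lemma explicitly flags that $X_P$ may fail to be equidimensional, so lower-dimensional components are precisely what this step misses. This is why the paper does not stop at ``finitely many Hilbert polynomials'': it feeds each Hilbert polynomial into Gotzmann's bound to obtain a uniform bound on $\reg_A \II_{X_P}$ for geometric fibers, hence a uniform bound on the degrees of defining equations, and then invokes \cite[Prop.~3.5]{bayer-mumford} to get, for each dimension $r$, a uniform bound $c_r$ on the degree of the union of the $r$-dimensional components; only then can one set $d_1 = \sum_r c_r$.

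Your alternative ``B\'ezout-type'' suggestion has the same weak point as written: a naive B\'ezout bound again speaks to the top-dimensional intersection degree, and one would need a refined B\'ezout theorem (in the sense of Fulton) that bounds the sum of degrees of all irreducible components by the product of defining degrees to make this rigorous. If you supplied such a refined statement this would be a genuinely different and arguably more elementary route than the regularity argument, but as it stands the key lemma that converts ``bounded Hilbert polynomial'' (or ``bounded defining degrees'') into ``bounded degree in every dimension'' is exactly what is missing from both of your suggested finishes, and it is exactly what the Gotzmann/Bayer--Mumford step in the paper supplies.
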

\begin{proof}
By choosing a flattening stratification for $\pi$, we see that the geometric fibers of $\pi$ attain only finitely many Hilbert polynomials.  
The existence of a Gotzmann bound for each of these finitely many Hilbert polynomials leads to a global bound on the Castenuovo--Mumford regularity of the geometric fibers of $\pi$ (i.e. a global bound on $\reg_A \mathcal I_{X_P}$ for geometric points $P$ in $B\otimes_{\F_q} \overline{\F_q}$) which implies the existence of a global bound on the degrees of the generators of the defining ideals of the geometric fibers.  
By~\cite[Prop.~3.5]{bayer-mumford}, this in turn implies the existence of a global bound $c_r$ on the degree of the union of the $r$-dimensional components of any geometric fiber.  We may set $d_1:=\sum_{r=0}^m c_r$.
\end{proof}

At the start of his sieving argument, Poonen used ampleness to obtain a surjectivity result \cite[Lemma~2.1]{poonen} to interpolate at any finite collection of points.  Since we are working with a semiample divisor $E$ instead of a very ample divisor, we may lose surjectivity from sections on $X$ to sections on the fibers $X_P$ as well as to sections on points within those fibers.  Nevertheless, the following provides asymptotic independence statements for the fibers of $\pi$, and it will form the start of our sieving argument.
\begin{lemma}\label{lem:independence}
Assume that $\dim \pi(X)>0$ and hence that $M>0$.  Let $W\subseteq \P^M$ be any $0$-dimensional subscheme.
Then, as $d\ra\infty:$
\begin{enumerate}
 \item \label{item:stabnod} 
The image of
\[
H^0(X,\cO_X(nA+dE))\to H^0(X_W, \cO_{X_W}(nA))
\]
stabilizes for sufficiently large $d$ to  the image of
$$
H^0(W, \pi_*(\cO(nA))\tensor \cO_W) \to H^0(X_W, \cO_{X_W}(nA)).
$$

\item \label{item:stabind} If the connected components of $W$ are $Q_1, \dots, Q_s$, then the stable image above equals the stable image of
\[
\bigoplus_{i=1}^s \im \left( H^0(X,\cO_X(nA+dE))\to H^0(X_{Q_i}, \cO_{X_{Q_i}}(nA)) \right).
\]

\end{enumerate}

\end{lemma}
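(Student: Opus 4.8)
The plan is to push the whole question down to $\P^M$ along $\pi$, where it becomes a statement about one coherent sheaf and its twists, and then to invoke Serre vanishing (made effective through Castelnuovo--Mumford regularity) for the stabilization. Set $\mathcal{G}:=\pi_*(\cO_X(nA))$, a coherent sheaf on $\P^M$. By the projection formula, $\pi_*(\cO_X(nA+dE))=\pi_*\bigl(\cO_X(nA)\otimes\pi^*\cO_{\P^M}(d)\bigr)=\mathcal{G}(d)$, and since global sections over $X$ factor through $\pi_*$ we get $H^0(X,\cO_X(nA+dE))=H^0(\P^M,\mathcal{G}(d))$ for every $d$. Likewise, writing $\pi_W\colon X_W\to W$ for the restriction of $\pi$, $j_W\colon W\hookrightarrow \P^M$ for the inclusion, and $\mathcal{G}_W:=(\pi_W)_*\cO_{X_W}(nA)$, I would use $H^0(X_W,\cO_{X_W}(nA))=H^0(W,\mathcal{G}_W)$. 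As in \S\ref{S:notation}, the hypothesis $\dim\pi(X)>0$ lets us fix a trivialization of $\cO_X(dE)$ on $X_W$ and a compatible one of $\cO_{\P^M}(d)$ on $W$, and these identify the restriction map $H^0(X,\cO_X(nA+dE))\to H^0(X_W,\cO_{X_W}(nA))$ with the natural map $H^0(\P^M,\mathcal{G}(d))\to H^0(W,\mathcal{G}_W(d))$.

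Next I would factor this restriction map at the level of sheaves. Pushing forward $\cO_X(nA+dE)\to (j_{X_W})_*\cO_{X_W}(nA+dE)$ and using $\pi_*(j_{X_W})_*=j_{W*}(\pi_W)_*$ gives a map $\mathcal{G}(d)\to j_{W*}\mathcal{G}_W(d)$ whose target is annihilated by $\mathcal{I}_W$, so it factors through the quotient $\mathcal{G}(d)\otimes\cO_W$; the resulting map $\mathcal{G}(d)\otimes\cO_W\to j_{W*}\mathcal{G}_W(d)$ is the $d$-twist of the base-change map $\mathcal{G}\otimes\cO_W\to\mathcal{G}_W$. Now consider the short exact sequence $0\to\mathcal{K}\to\mathcal{G}\to\mathcal{G}\otimes\cO_W\to 0$ with $\mathcal{K}:=\mathcal{I}_W\mathcal{G}$ coherent. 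Serre vanishing gives $H^1(\P^M,\mathcal{K}(d))=0$ for all $d$ past an explicit threshold controlled by the Castelnuovo--Mumford regularity of $\mathcal{K}$, hence $H^0(\P^M,\mathcal{G}(d))\twoheadrightarrow H^0(\P^M,\mathcal{G}(d)\otimes\cO_W)=H^0(W,\mathcal{G}\otimes\cO_W)$ for all such $d$. Therefore, for $d$ in this stable range, the image of the restriction map equals the image of $H^0(W,\mathcal{G}\otimes\cO_W)\to H^0(X_W,\cO_{X_W}(nA))$, i.e. of $H^0(W,\pi_*(\cO(nA))\otimes\cO_W)\to H^0(X_W,\cO_{X_W}(nA))$. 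This is part~\eqref{item:stabnod}.

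For part~\eqref{item:stabind}, the connected components $Q_1,\dots,Q_s$ of the $0$-dimensional scheme $W$ are pairwise disjoint open-and-closed subschemes, so $\cO_W=\bigoplus_i\cO_{Q_i}$, $X_W=\coprod_i X_{Q_i}$, $\mathcal{G}\otimes\cO_W=\bigoplus_i\mathcal{G}\otimes\cO_{Q_i}$, $H^0(X_W,\cO_{X_W}(nA))=\bigoplus_i H^0(X_{Q_i},\cO_{X_{Q_i}}(nA))$, and the map $H^0(W,\mathcal{G}\otimes\cO_W)\to H^0(X_W,\cO_{X_W}(nA))$ is the direct sum of the corresponding maps for the $Q_i$. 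Since the image of a direct sum of linear maps is the direct sum of the images, applying part~\eqref{item:stabnod} to $W$ and to each $Q_i$ shows that the stable image of $H^0(X,\cO_X(nA+dE))\to H^0(X_W,\cO_{X_W}(nA))$ is the direct sum over $i$ of the stable images of the maps $H^0(X,\cO_X(nA+dE))\to H^0(X_{Q_i},\cO_{X_{Q_i}}(nA))$, which is the claim.

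The main point requiring care is not a deep input but the bookkeeping in the first two steps: one must verify that the restriction-of-sections map is genuinely computed by the composite $H^0(\mathcal{G}(d))\to H^0(\mathcal{G}(d)\otimes\cO_W)\to H^0(\mathcal{G}_W(d))$ of the projection-formula and base-change maps, and that the trivializations of $\cO_{\P^M}(d)$ on $W$ and of $\cO_X(dE)$ on $X_W$ are chosen compatibly, so that the relevant diagram commutes on the nose rather than up to a twist. Once this is in place, the stabilization is exactly Serre vanishing for $\mathcal{K}=\mathcal{I}_W\,\pi_*(\cO_X(nA))$, and its threshold in terms of $\reg_A$ of the appropriate sheaf is what underlies the effective statement in Remark~5 after Theorem~\ref{T:bertini}.
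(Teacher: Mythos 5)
Your proof is correct and follows essentially the same route as the paper's: both push the problem down via $\pi$, identify $H^0(X,\cO_X(nA+dE))$ with $H^0$ of the twist of $\pi_*(\cO_X(nA))$, use Serre vanishing (via Castelnuovo--Mumford regularity) to get surjectivity onto $H^0(W,\pi_*(\cO_X(nA))\otimes\cO_W)$ for $d\gg 0$, and then decompose over the connected components of $W$ for part~\eqref{item:stabind}. The only cosmetic differences are that you work over $\P^M$ rather than $B=\pi(X)$ and make the kernel $\mathcal{K}=\mathcal{I}_W\,\pi_*(\cO_X(nA))$ explicit in the surjectivity step, where the paper invokes Serre vanishing for the surjection directly; both amount to the same $H^1$-vanishing.
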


\begin{proof}
For \eqref{item:stabnod}, 
we first observe that we have a commutative diagram that factors our map of interest:
\[
\xymatrix{
 H^0(X,\cO(nA+dE)) \ar[rr]^-g\ar[d]^{\cong}& &H^0(X_W, \cO_{X_W}(nA))\ar[dd]^\cong\\
 H^0(B,\pi_*(\cO_X(nA))(d))\ar[u]\ar[r]^-h&H^0(W, \pi_*(\cO_X(nA)) \tensor \cO_{W}(d) )\ar[d]^\cong&\\ 
 &H^0(W, \pi_*(\cO_X(nA)) \tensor \cO_{W} )\ar[r]^-k\ar[u]& H^0(W,\pi_*(\cO_{X_W}(nA))).\ar[uu].
 }
\]
Since $\pi_*(\cO(nA)) \ra \pi_*(\cO(nA)) \tesnor \cO_{W}$
is a surjection of coherent sheaves on $B$, and since $\cO_B(1)$ is very ample, 
it follows from Serre vanishing that for $d\gg 0$\footnote{It suffices to choose $d$ at least the Castelnuovo--Mumford regularity of $\pi_*(\cO_X(nA))\otimes \cO_W$ on $\P^M$.} the map $h$ is surjective.
Thus the image of $g$ equals the image of $k$, as claimed.

For \eqref{item:stabind}, the map
$$
H^0(W, \pi_*(\cO_X(nA))\tensor \cO_W) \to H^0(X_W, \cO_{X_W}(nA)).
$$
clearly decomposes as a direct product of the analogous maps on each component.
\end{proof}

We use $\pi_1$ and $\pi_2$ to refer to the projections from $\P^N\times \P^M$ onto the two factors. 
The following lemma provides a lower bound for size of the image of $S_{n,d}$ at a (possibly fattened)
point of $X$ or fiber of $\pi$, which we will use in several different ways to show that the singularity probabilities
in medium and high degree fibers are small enough asymptotically.

\begin{lemma}\label{L:sizeofimage}
Let $W\subseteq \A^N \times \A^M $ be a closed subscheme such that 
$\pi_1$ is an isomorphism on $W$ and
$\pi_2(W)$ is supported at a closed point $P$ of degree $e$.  Let $w:=\dim_{\F_q} H^0(W,\cO_W)$ (if $\dim W>0$ then we set $w=\infty$) and
 $r:=\deg(\pi_2(W))$.
Consider the restriction map
$$
\phi_{W,n,d}: S_{n,d} \ra H^0(W,\cO_W).
$$
Then, for $n,d\geq 0$, we have
\begin{itemize}
 \item $\# \im(\phi_{W,n,d}) \geq q^{\min(d+1, r)}$ and
\item for $d+1\geq r$, we have  $\# \im(\phi_{W,n,d}) \geq q^{\min(en+r, w)}.$
 \end{itemize}
\end{lemma}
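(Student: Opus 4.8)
The plan is to analyze the two claimed lower bounds by the filtration of $S_{n,d}$ coming from the bidegree in the $t$-variables. Since $\pi_2(W)$ is supported at a single closed point $P$ of degree $e$ and $\pi_1$ is an isomorphism on $W$, the coordinate ring $H^0(W,\cO_W)$ is a finite-dimensional $\F_q$-algebra which, after the chosen trivialization on $\pi_2(W)$, can be filtered by powers of the maximal ideal $\mathfrak m_P$ of $\cO_{\P^M,P}$; the associated graded pieces have total dimension $w$, and the length of $\cO_{\pi_2(W)}$ is $r$, so that $\deg(\pi_2(W)) = r$ accounts for the ``$t$-direction'' while the full length $w$ also records the ``$s$-direction'' coming from $\pi_1$. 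First I would observe that for the first bound it suffices to produce $r$ sections in $S_{n,d}$ whose images in $H^0(\pi_2(W),\cO_{\pi_2(W)}) \hookrightarrow H^0(W,\cO_W)$ (pulled back via $\pi_2$) are $\F_q$-linearly independent, provided $d+1 \geq r$; and if instead $d+1 < r$ one gets only $d+1$ of them. Concretely: pick a local parameter / use the regularity of $\cO_{\pi_2(W)}$ on $\P^M$ to see that monomials $1, t, t^2, \dots, t^{\min(d,r-1)}$ in a suitable linear form $t$ restrict to linearly independent functions on $\pi_2(W)$ (this is the standard fact that the restriction $H^0(\P^M,\cO(d)) \to H^0(\pi_2(W),\cO)$ is surjective once $d \geq \reg \cO_{\pi_2(W)}$, and $\reg \cO_{\pi_2(W)} \leq r$ since $\pi_2(W)$ is a $0$-dimensional scheme of length $r$). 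Pulling these back to $S_{n,d}$ via $\pi_2$ (they have $t$-degree $\leq d$ and $s$-degree $0 \leq n$) and then restricting to $W$ gives the bound $\#\im(\phi_{W,n,d}) \geq q^{\min(d+1,r)}$.

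For the second bound, assume $d+1 \geq r$, so that $\phi_{W,n,d}$ already surjects onto (the image in $H^0(W,\cO_W)$ of) functions pulled back from $\pi_2(W)$, a space of dimension $r$ over $\F_q$. Now I would bring in the $s$-variables: since $\pi_1$ is an isomorphism on $W$, the point $P$ has a fiber $W \to \pi_2(W)$ which, locally, is cut out inside $\A^N$ at the image point by equations in the $s_i$, and increasing the $s$-degree by $1$ can only increase the dimension of the image. More precisely, I would build a chain of subschemes $W_0 \subsetneq W_1 \subsetneq \cdots$ realizing the length filtration of $H^0(W,\cO_W)$, grouped so that passing from $s$-degree $j$ to $s$-degree $j+1$ (with $t$-degree held at its maximum $\leq d$, which is allowed since $d+1 \geq r$) contributes at least $e = \deg(\pi_2(W)) = \deg P$ new independent linear functionals, until either we exhaust $W$ (reaching dimension $w$) or we have added $n$ such increments on top of the $r$ we already had. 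This yields $\#\im(\phi_{W,n,d}) \geq q^{\min(en+r, w)}$. (If $\dim W > 0$ then $w = \infty$ and the bound just says the image grows without bound, which again follows from the same increment argument since one of the two directions gives unbounded growth.)

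The technical heart — and the step I expect to be the main obstacle — is making precise the claim that each unit increase in allowed $s$-degree buys at least $e$ new dimensions in the image, uniformly, until saturation at $w$. The subtlety is that $W$ need not be a complete intersection or even Cohen–Macaulay, and the filtration of $\cO_W$ by $\mathfrak m_P$-powers does not interact with the $s$-grading as cleanly as one would like; one has to argue at the level of $\cO_{\pi_2(W)}$-modules, using that $H^0(W,\cO_W)$ is a module over the Artinian local ring $\cO_{\pi_2(W)}$ of length $r$ with residue field of degree $e$, and that the image of $S_{n,d}$ is the $\cO_{\pi_2(W)}$-submodule generated by restrictions of polynomials of $s$-degree $\leq n$. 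The right framework is: choose generators of this module in order of increasing $s$-degree and note each new generator not already in the span adds at least $[\kappa(P):\F_q] = e$ to the $\F_q$-dimension (since the submodule it generates over the residue field has $\F_q$-dimension a multiple of $e$), while $n$ increments suffice to generate everything that is generated in $s$-degree $\leq n$; combined with the base contribution of $r$ from the $t$-direction (valid because $d+1 \geq r$), this gives the exponent $\min(en+r,w)$. Once this module-theoretic bookkeeping is set up carefully, both inequalities follow, and one should double-check the edge cases $n=0$, $d=0$, and $r=1$ where the two bounds should reduce to the trivial statement $\#\im \geq q^{\min(0,\cdot)} = 1$ or to $\#\im \geq q$.
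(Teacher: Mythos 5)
Your high-level intuition matches the paper's proof — both bounds come from understanding the image $B_{n,i}$ of $S_{n,i}$ in $H^0(W,\cO_W)$, and the exponent $e$ in the second bound comes from the residue field. But there are two genuine gaps.

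\textbf{First bound.} Your argument exhibits $\min(d+1,r)$ independent functions on $\pi_2(W)$ as powers $1,t,\dots,t^{\min(d,r-1)}$ of a single linear form $t$. This is false in general. Take $\pi_2(W) = \Spec\bigl(\F_q[u,v]/(u,v)^2\bigr)$, which has $r=3$: for every linear form $t=au+bv$ one has $t^2=0$, so no three powers of a single linear form are independent, even though $d\geq 2$ does give $\#\im\geq q^3$. The point is not that a single linear form's powers fill up the scheme; rather, $B_{0,i+1}=B_{0,i}+\sum_j t_j B_{0,i}$, so $B_{0,i}$ strictly increases in $\F_q$-dimension with each $i$ until it is closed under multiplication by every affine coordinate $t_j$, at which point (since it contains $1$) it is all of $H^0(\pi_2(W),\cO)$. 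This gives $\dim B_{0,d}\geq \min(d+1,r)$ without ever needing to find a generator whose powers are independent; the surjectivity for $d\geq r-1$ is recovered as a corollary, not assumed.

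\textbf{Second bound.} You correctly identify the key structural fact needed — that for $d+1\geq r$ the image $B_{n,d}$ is an $\cO_{\pi_2(W)}$-module, so that dimension jumps when $n$ increases must be multiples of $e$ — and you even flag this as ``the technical heart.'' But you then \emph{assert} that ``the image of $S_{n,d}$ is the $\cO_{\pi_2(W)}$-submodule generated by restrictions of polynomials of $s$-degree $\le n$'' rather than proving it. This is exactly the step that requires work: the paper proves it by using the natural isomorphism $S_{n,0}\otimes S_{0,i}\cong S_{n,i}$ to deduce that $B_{n,0}\otimes B_{0,i}\to B_{n,i}$ is surjective, hence that $B_{n,i}=B_{n,i+1}=\cdots=B_{n,2i}$ once $i+1\geq r$ (because $B_{0,i}$ has already stabilized), and then observing that $B_{n,i}\otimes B_{0,i}\to B_{n,2i}\cong B_{n,i}$ gives $B_{n,i}$ the structure of a module over $B_{0,i}=H^0(\pi_2(W),\cO)$. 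Once this module structure is in hand, the paper's conclusion that $\#B_{n,i}$ is a power of $q^e$ (cardinality of any finite-length module over the local ring $\cO_{B,P}$ is a power of the residue field cardinality) combined with $B_{n+1,i}=B_{n,i}+\sum_j s_j B_{n,i}$ gives the increment of at least $e$. Your ``new generator adds a module of dimension a multiple of $e$'' heuristic can be made rigorous, but only after the module structure is established — which is the point you skip.

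In short: fix the first bound by replacing the single-linear-form argument with the ``strictly increasing until saturation'' argument, and fill the gap in the second by actually proving that $B_{n,d}$ is an $\cO_{\pi_2(W)}$-module when $d+1\geq r$ via the bigraded tensor decomposition.
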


\begin{proof}
 We  have the commutative diagram of $\F_q$-algebras,
\[
\xymatrix{
\F_q[s_1, \dots, s_N,t_1,\dots,t_M]\ar[r]&H^0(W,\cO_W)\\
\F_q[t_1, \dots, t_M]\ar[r]\ar[u]& H^0(\pi_2(W),\cO_{\pi_2(W)})\ar[u]
}
\]
where both vertical arrows are injections and both horizontal arrows are surjections.
We set $B_{n,i}$ to be the image of $S_{n,i}$ in $H^0(W,\cO_W)$.  We then obtain the diagram
\[
\xymatrix{
S_{n,i}\ar[r]&B_{n,i}\\
S_{0,i}\ar[r]\ar[u]&B_{0,i}.\ar[u]
}
\]
Since $B_{0,i+1}=\sum_j{t_jB_{0,i}} + B_{0,i}$, it follows that $B_{0,i}$ increases in dimension (as an $\F_q$ vector space) with each increase of $i$ until it stabilizes to $H^0(\pi_2(W),\cO_{\pi_2(W)})$.  
(This is the same idea used in \cite[Lemma 2.5]{poonen}.)
 Thus the first statement in the lemma follows.  

So for $i+1\geq r$, we have that $B_{0,i}=H^0(\pi_2(W),\cO_{\pi_2(W)}).$
 Since the natural map $S_{n,0}\otimes S_{0,i}\to S_{n,i}$ is an isomorphism for any $n$ and $i$, it follows that $B_{n,0}\otimes B_{0,i}\to B_{n,i}$ is surjective for any $n$ and $i$ and thus $B_{n,i}=B_{n,i+1}=\dots=B_{n,2i}$.  We observe that
\[
B_{n,i}\otimes B_{0,i}\to B_{n,2i}\cong B_{n,i}
\]
and hence $B_{n,i}$ has the structure of a $H^0(\pi_2(W),\cO_{\pi_2(W)})$-module for all $n\geq 0$ and any $i+1\geq r$.

The cardinality of any finite length $\cO_{B,P}$-module is a power of the cardinality of the residue field, which is $q^{\deg(P)}$.
So, for $i+1\geq r$, we have that $\# B_{n,i}$ is a power of $q^e$.
Now, since $B_{n+1,i}=\sum_j{s_jB_{n,i}} + B_{n,i}$, it follows that $B_{n,i}$ increases in dimension 
by at least $e$ (as an $\F_{q}$ vector space)
with each increase of $n$ until it stabilizes to $H^0(\pi_1(W),\cO_{\pi_1(W)})=H^0(W,\cO_{W})$. 
The second statement in the lemma follows.
\end{proof}

The following lemma will let us bound the singularity probability at points for which the bounds of Lemma~\ref{L:sizeofimage}
are insufficient, if we had to use those bounds for all the points.  
The starting point of this lemma is Poonen's idea for decoupling a function and its partial derivatives.  This idea was introduced in \cite[Lemma 2.6]{poonen}
to limit 
the number of potentially singular points by showing that, with high probability, the partial derivatives
have a zero-dimensional common vanishing locus.
However, our case requires a  more delicate analysis.  To begin with, there may be positive dimensional components on which the partial derivatives all vanish;
 we control some of these components using Lemma~\ref{L:sizeofimage} and determine that others do not contain points of interest.
We also use other new ideas, including how we control the actual bounds involved in this bigraded scenario, the distinguishing of the dimension of components and their images under $\pi$, and the distinguishing of 
the degree and $\pi$-relative degree of points.  When choosing our open sets in the proof, we follow~\cite[Proof of Lemma~2.6]{bucur-kedlaya}, as this can provide a more effective bound on the constants that arise.

We will apply this lemma in three contexts: 
to rule out (asymptotically almost surely) singularities at $Q\in X$ where $\pi(Q)$ has high degree;
to rule out (a.a.s.) singularities at $Q\in X$ of  high relative degree but where $\pi(Q)$ has medium degree;  and to control when  the product of Theorem~\ref{T:taylor} is zero. 
While the first context is analogous to where \cite[Lemma 2.6]{poonen} and \cite[Lemma~2.6]{bucur-kedlaya} are used,
the second context is new to our case and requires the most delicate bounds and argument.

\begin{lemma}\label{L:2kindshigh}
Let $\Xsm$ be the smooth locus of $X$ and 
let $j,J$ be integers.  Fix $n\geq 1$ and $d\geq 0$, and let $f\in R_{n,d}$ be chosen uniformly at random.
\begin{enumerate}
 \item The probability that $H_f$ has a singularity at a closed point $Q\in \Xsm$ with $\deg(\pi(Q))\in [j,\infty)$ is at most 
 \[
 O((n^m+d^m)q^{-\min(\lfloor d/p \rfloor+1,j)}).
 \] \label{E:overhigh}
\item  The probability that  $H_f$ has a singularity at a closed point $Q\in \Xsm$ with $\deg(\pi(Q))\in [j,\lfloor d/p \rfloor+1]$ and $\deg(Q)/\deg(\pi(Q))\geq J$
 is at most 
 $$
O((n^m+d^m) q^{-(\lfloor d/p \rfloor+1)}+  \sum_{e=j}^{\lfloor d/p \rfloor+1} e(n^m+e^m) q^{e(b-\min(J, \lfloor (n-1)/p \rfloor +1 ))}).\label{E:highovermed}
$$

\item Let $P$ be a closed point of $B$ with $\deg(P)=e$.  For $\lfloor d/p \rfloor+1\geq e$, 
the probability that $H_f$ has a singularity at a closed point $Q\in \Xsm$ with $\pi(Q)=P$ and  $\deg(Q)/e \geq J$
 is at most 
 $$
O((n^m+d^m) q^{-(\lfloor d/p \rfloor+1)}+   e(n^m+e^m) q^{-e\min(J, \lfloor (n-1)/p \rfloor +1 )}).
$$ 
\label{E:highoverP}

\end{enumerate}
\end{lemma}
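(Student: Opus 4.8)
The plan is to push Poonen's decoupling of a section from its partial derivatives (\cite[Lemma~2.6]{poonen}), in the form used by Bucur and Kedlaya (\cite[Proof of Lemma~2.6]{bucur-kedlaya}), into the present bigraded and semiample situation, and then to read off all three parts from one common construction by weighting a point count in three different ways. The auxiliary space $S_{n,d}$ enters only through Lemma~\ref{L:sizeofimage}: for a finite subscheme $W\subseteq X$ the restriction $S_{n,d}\to H^0(W,\cO_W)$ factors through $R_{n,d}$, so $\#\im(R_{n,d}\to H^0(W,\cO_W))\ge\#\im(\phi_{W,n,d})$, and hence the probability that a uniform $f\in R_{n,d}$ restricts to any prescribed section on $W$ is at most the reciprocal of $\#\im(\phi_{W,n,d})$, which that lemma bounds below. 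Also $H_f\cap X$ is singular at $Q\in\Xsm$ precisely when $f$ vanishes on $Q^{(2)}$, and (when $\dim\pi(X)>0$) the asserted bounds all exceed $1$ unless $d$ is large, so only $d\gg 0$ carries content.

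I would cover $\Xsm$ by finitely many affine charts on which suitable ratios of the $s$-coordinates restrict to local parameters $x_1,\dots,x_m$, one of which, say $x_1$, is a ``good'' coordinate in the sense of \cite{bucur-kedlaya}. On such a chart, following the combinatorial device of \cite[Lemma~2.6]{poonen}, I would write a uniform $f\in R_{n,d}$ as a sum of independent pieces $f=f_0+g_0^p+x_1g_1^p+\dots+x_mg_m^p$, with $g_0$ of bidegree roughly $(\lfloor n/p\rfloor,\lfloor d/p\rfloor)$, the $g_1,\dots,g_m$ of bidegree roughly $(\lfloor(n-1)/p\rfloor,\lfloor d/p\rfloor)$, and $f_0$ uniform over a complementary space; since $\partial(g^p)/\partial x_i=0$ in characteristic $p$ one has $\partial f/\partial x_i=\partial f_0/\partial x_i+g_i^p$, so conditioning on $f_0$ makes the $m$ partials independent and leaves $g_0$ free to move the value of $f$ without disturbing them. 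Revealing $g_1,\dots,g_m$ in turn and forming the common zero scheme $V$ of the partials one equation at a time, the chance that a positive-dimensional component $C$ on which all the partials vanish appears in $V$ is small: by Lemma~\ref{L:sizeofimage} it is at most $q^{-(\lfloor d/p\rfloor+1)}$ when $\dim\pi(C)>0$ (restrict $C$ to a fiber $X_{P_0}$ over a point of degree exceeding $\lfloor d/p\rfloor$, available since $\pi(C)$ is positive-dimensional, and apply the first bound), and when $\pi(C)$ is a single point it is at most $q^{-\min(\lfloor d/p\rfloor+1,\,\deg\pi(C))}$, improved to $q^{-\deg(\pi(C))(\lfloor(n-1)/p\rfloor+1)}$ if $\deg\pi(C)\le\lfloor d/p\rfloor+1$. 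Summed over the $O((n+d)^m)$ relevant components these contribute within the stated bounds (the first case giving the displayed $O((n^m+d^m)q^{-(\lfloor d/p\rfloor+1)})$ term). Outside these events the only positive-dimensional components of $V$ that matter lie in a fiber $X_P$ with $\deg(P)$ in the relevant range, and by Lemma~\ref{L:Global1} have $A$-degree at most $d_1\deg(P)$; what else remains is the zero-dimensional part of $V$, of $A$-degree $O((n+d)^m)=O(n^m+d^m)$ by B\'ezout.

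It then remains to bound, for a zero-dimensional candidate point $Q$ with image $P$ of degree $e$, the probability that $H_f$ is singular at $Q$. Conditioning on $f_0,g_1,\dots,g_m$ leaves $g_0$ uniform, and singularity at $Q\in V$ forces $g_0(Q)^p$ to a prescribed value, so the probability is at most the reciprocal of $\#\im(\phi_{Q,\lfloor n/p\rfloor,\lfloor d/p\rfloor})\ge q^{\min(\lfloor d/p\rfloor+1,\,e)}$ by the first bound of Lemma~\ref{L:sizeofimage}; since $e\ge j$ this is $\le q^{-\min(\lfloor d/p\rfloor+1,\,j)}$, and summing over the $O(n^m+d^m)$ candidates gives part~(1). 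For parts~(2) and~(3) I would instead condition on everything except the good-coordinate piece $g_1$: singularity at $Q$ (resp.\ the vanishing of all partials on a positive-dimensional $C\subseteq X_P$, which forces $H_f$ to be singular along $H_f\cap C$) then forces $g_1(Q)^p$ (resp.\ $g_1|_C$) to a prescribed value, and since $\lfloor d/p\rfloor+1\ge e$ the second bound of Lemma~\ref{L:sizeofimage} makes this probability at most $q^{-e\min(\lfloor(n-1)/p\rfloor+1,\,\deg(Q)/e)}\le q^{-e\min(\lfloor(n-1)/p\rfloor+1,\,J)}$ for the points of relative degree $\ge J$; counting those points (and the positive-dimensional components) by Lang--Weil over the degree-$e$ points of $B$ and by B\'ezout inside each fiber, via Lemma~\ref{L:Global1}, gives the sum $\sum_{e=j}^{\lfloor d/p\rfloor+1}e(n^m+e^m)q^{e(b-\min(J,\lfloor(n-1)/p\rfloor+1))}$, and adding the $O((n^m+d^m)q^{-(\lfloor d/p\rfloor+1)})$ term yields part~(2); part~(3) is the specialization to one fixed fiber.

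The step I expect to be the main obstacle is exactly this treatment of the positive-dimensional components together with the relative-degree count: unlike the ample case, $V$ cannot be forced to be finite, so its positive-dimensional components must be genuinely disposed of, and in the medium-degree regime the estimates must remain effective enough that the large number of points of relative degree at least $J$ in a fiber is outweighed by their small individual singularity probabilities. This is where a brief argument of \cite{poonen} is replaced by the most delicate bookkeeping in the paper, and it is why $\deg(Q)$, $\deg(\pi(Q))$, and $\dim\pi(C)$ must all be tracked separately.
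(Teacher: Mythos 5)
Your overall architecture is the same as the paper's: reduce from $R_{n,d}$ to $S_{n,d}$, cover $\Xsm$ by affine charts with local coordinates, decouple $f$ Poonen/Bucur--Kedlaya style so that the partials acquire independent pieces, bound the probability (Claim 1 in the paper) that a positive-dimensional admissible component survives into the critical locus $W_m$ by peeling off one partial at a time and applying Lemma~\ref{L:sizeofimage} to each component (with the split according to whether $\dim\pi$ of the component is positive or zero, and with Lemma~\ref{L:Global1} plus Lang--Weil to count components fiberwise), and then (Claim 2 in the paper) bound the probability of vanishing at a zero-dimensional admissible component of $W_m$ and sum via B\'ezout. Your Claim~1 step matches the paper's essentially verbatim, including the new feature of counting components lying over each degree-$e$ point of $B$ separately so that the $q^{be}$ from Lang--Weil is beaten by the image-size bound.

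The gap is in your Claim~2 step for parts~(2) and~(3). Having revealed $g_1,\dots,g_m$ to construct $W_m$ and run Claim~1, the list of zero-dimensional admissible candidate points is a function of $g_1$. You then propose to ``condition on everything except the good-coordinate piece $g_1$'' and use the residual randomness in $g_1$ to control the probability of singularity at such a $Q$; but you cannot simultaneously fix $g_1$ to determine the candidate set and leave $g_1$ free to control the vanishing. If instead you try to avoid fixing $g_1$, you lose the finite candidate set and must sum the bound $q^{-e\min(J,\lfloor(n-1)/p\rfloor+1)}$ over all admissible $Q$ in a degree-$e$ fiber, which diverges once the fiber is positive-dimensional. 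The correct move, and what the paper does, is to condition on $f_0,g_1,\dots,g_m$ in all three parts and use the one genuinely remaining free piece, your $g_0$ (the paper's $h$, of bidegree $(\lfloor n/p\rfloor,\lfloor d/p\rfloor)$), together with the \emph{second} bound of Lemma~\ref{L:sizeofimage} applied to $\phi_{Q,\lfloor n/p\rfloor,\lfloor d/p\rfloor}$: since $\lfloor d/p\rfloor+1\geq e=\deg\pi_2(Q)$ in parts~(2) and~(3), this yields $\Prob[g_0(Q)^p=\text{prescribed}]\leq q^{-e\min(\lfloor n/p\rfloor+1,\,\deg(Q)/e)}\leq q^{-e\min(\lfloor n/p\rfloor+1,\,J)}$. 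This is actually slightly \emph{stronger} than the $\lfloor(n-1)/p\rfloor+1$ you wanted; the weaker exponent in the lemma's statement is inherited from the Claim~1 step (where the derivative pieces $g_{i+1}$ genuinely have $s$-degree $\lfloor(n-1)/p\rfloor$), not from Claim~2. With that substitution, your argument closes and is the paper's proof.
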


\begin{proof}
We will prove all statements of the lemma simultaneously, occasionally dividing into cases.
Call a closed point $Q\in \Xsm$ \emph{admissible} in the first case if $\deg(\pi(Q))\in [j,\infty)$,  in the second case
if $\deg(\pi(Q))\in [j,\lfloor d/p \rfloor+1]$ and $\deg(Q)/\deg(\pi(Q))\geq J$, and in the third case if $\pi(Q)=P$
and   $\deg(Q)/e \geq J$.
 
Let $A:=\F_q[s_1,\dots,s_N]$.
We can cover $\Xsm$ with finitely many opens, so as to replace $\Xsm$ in the lemma by an open $U\subseteq \Xsm\cap(\A^N \times \A^M).$  
By again reducing to a finite cover, and by possibly relabelling the $s_i$, we can assume that $ds_1,\dots,ds_m$ generate the differentials in $U\cap \A^N$.
On $U$, we write $ds_k =\sum_{i=1}^m a_{k,i} ds_i$ for $k>m$, and $a_{k,i}\in \F_q(s_1,\dots,s_n)$ a regular function on $U$.  This gives maps $D_i : A\ra \F_q(s_1,\dots,s_n)$, so that on $U$, we have $df = \sum_{i=1}^m D_ifds_i$, and the
degree of the numerator of $D_if$ is $O(\deg(f))$.

For any closed point $Q \in U$, we have the maps
$$
S_{n,d}\cong H^0(\P^N\times\P^m,\cO(n,d)) \ra R_{n,d} \ra H^0(Q^{(2)},\cO_{Q^{(2)}}),
$$
so a randomly chosen element of $g\in S_{n,d}$ is at least as likely to give a $H_g\cap U$ singular at $Q$ as a randomly chosen element of 
$f\in R_{n,d}$ is to give a $H_f$ singular at $Q$.  Thus, we reduce to studying a randomly chosen $g\in S_{n,d}$.

Each coordinate function $t_j$ of $\A^M$ pulls back via $\pi$ to a regular function on $U$,
which, since $U\subseteq \A^N$, can be expressed as a polynomial $\phi(t_j) \in A$.
Thus, given $g\in S_{n,d}$, we may use $\phi$ to  write $g|_U$ as a polynomial
$\phi(g)$ in $s_1, \dots, s_N$.
For a given $g\in S_{n,d}$, a point $Q\in U$ is a singularity of $H_g\cap U$ if and only if $Q\in
\left\{\phi(g)=D_1\phi(g)  =\dots = D_m\phi(g)=0 \right\}\subseteq \A^N.$

 Following Poonen's decoupling idea~\cite[p.~1106]{poonen}, we will rewrite $g$ in such a way that the partial derivatives of $g|_U$ are largely independent.  We will choose $g\in S_{n,d}$ uniformly at random by choosing
$g_0\in S_{n,d}$, and $g_1,\dots,g_m \in  S_{\lfloor (n-1)/p \rfloor,\lfloor d/p \rfloor}$
and $h\in S_{\lfloor n/p \rfloor,\lfloor d/p \rfloor}$ uniformly at random, and then putting
$$
g=g_0+g_1^ps_1 +\dots+ g_m^p s_m +h^p.
$$
For $0\leq i\leq m$, we define 
$$
W_i=U \cap \left\{D_1\phi(g)  =\dots = D_i\phi(g)=0 \right\}\subset \A^{N}.
$$

\noindent{\em Claim 1:} For $0 \le i \le m-1$,
given a choice of $g_0,g_1,\dots,g_i$
for which each component $V$ of $W_i$ that contains an admissible point has $\dim(V) \le m-i$, 
the probability that some component $V'$ of $W_{i+1}$ contains
an admissible point and has
$\dim(V') > m-i-1$
is at most
\begin{itemize}
\item
$ O((n^m+d^m) \cdot q^{-\min(\lfloor d/p \rfloor+1,j  )})$
\item 
$ O((n^m+d^m) q^{-(\lfloor d/p \rfloor+1)}+  \sum_{e=j}^{\lfloor d/p \rfloor+1} e(n^i+e^i) q^{be-\min(eJ,e( \lfloor (n-1)/p \rfloor  +1))})$
\item 
$ O((n^m+d^m) q^{-(\lfloor d/p \rfloor+1)}+   e(n^i+e^i) q^{-\min(eJ,e( \lfloor (n-1)/p \rfloor  +1))})$
\end{itemize}
in the cases of the Lemma, respectively.

\bigskip
\noindent{\em Proof of Claim 1:}
Let $V_1$, \dots, $V_c$ denote the irreducible components of $(W_i)_\red$.
If all points of $V_k$ are non-admissible, then the same
will be true for components of $V_k\cap W_{i+1}$.
So, for $V_k$ containing an admissible point and with $\dim \pi(V_k)\leq m-i$, we will bound the probability $\mathcal{P}$ that
$\dim(V_k\cap W_{i+1})> m-i-1$, which happens exactly when
 $D_{i+1}\phi(g)$ vanishes on $V_k.$
Since $D_{i+i}\phi(g)=D_{i+i}\phi(g_0)+\phi(g_{i+1})^p$, given $g_0,\dots,g_i$, the choices for $g_{i+1}$
that give $D_{i+i}\phi(g)$ vanishing on $V_k$ are a coset of the functions vanishing on $V_k$ (or there are no such choices).
So we will give a lower bound for the size of the image $I:=\im(S_{\lfloor (n-1)/p \rfloor,\lfloor d/p \rfloor} \ra H^0(V_k,\cO))$ and thus a lower bound for $\mathcal{P}^{-1}=\# I$.

If $\dim\pi(V_k)\geq 1$, then there is a $t_i$ such that any polynomial in $t_i$ is non-vanishing on $V_k$, and so $\# I \geq q^{\lfloor d/p \rfloor+1}$ and
$\mathcal{P}\leq q^{-(\lfloor d/p \rfloor+1)}$. 
We may apply B\'ezout's theorem
to obtain $c = O(n^i+d^i)$.
For the contribution to Claim 1 from
$V_k$ with $\dim\pi(V_k)\geq 1$, we use the bound $\mathcal{P}\leq q^{-(\lfloor d/p \rfloor+1)}$ above for a single component, and $c = O(n^i+d^i)$.  

If $\dim\pi(V_k)= 0$ and $V_k$ contains an admissible point, then we apply Lemma~\ref{L:sizeofimage} with $W=V_k$, to obtain 
\begin{itemize}
 \item  $\#I\geq q^{\min(\lfloor d/p \rfloor+1,\deg(\pi(V_k))} \geq q^{\min(\lfloor d/p \rfloor+1,j)}$ in any case, and
\item in the second or third case of the lemma, $\# I\geq q^{   \min(  \deg(\pi(V_k)(\lfloor (n-1)/p \rfloor+1),  \dim H^0(V_k,\cO_{V_k}) ) }$.
\end{itemize}

 Since $i\leq m$, the first case of Claim 1 now follows, using the fact that $c = O(n^i+d^i)$.
For the second and third cases, we further determine an upper bound on the number of $V_k$ that can map to the same closed point $P=\pi(V_k)$ of degree $e$.
In $X_P$, any expression in the variables $t_i$ reduces to one of degree at most $e$, so there are at most
$O(e(n^i+e^i))$ of the  $V_k$ in $X_P$ (a priori the constant depends on the sum of the degrees of the irreducible components of  $X_P$, but this
is bounded by $d_1e$ by Lemma~\ref{L:Global1}).
The third case of Claim 1 now follows. 
 There are at most $2^b\deg(B)q^{be}= O(q^{be})$ points $P\in B$ of degree $e$ by the Lang--Weil bound \cite[Lemma 1]{LW}, and so
 from degree $e$ points the total contribution to $\mathcal{P}$ is
$$
O(q^{be}e  (n^i+e^i) q^{-\min(eJ,e (\lfloor (n-1)/p \rfloor +1) )}).
$$
Summing over $e=j, \dots, \lfloor d/p\rfloor +1$,  we obtain the contribution to the second case of Claim 1 coming from
$V_k$ with $\dim\pi(V_k)= 0$.  

\bigskip
\noindent{\em Claim 2:} Conditioned on a choice of $f_0,g_1,\dots,g_m$
for which each component $V$  of $W_m$ that contains an admissible point has  $\dim(V)=0$,
the probability that $H_g \cap U$ contains an admissible singular point is
\begin{itemize}
\item $ O((n^m+d^m)q^{-\min(\lfloor d/p \rfloor+1,j))}$ 
\item $ O(\sum_{e=j}^{\lfloor d/p \rfloor+1} q^{be} e(n^m+e^m)q^{-\min(eJ,e( \lfloor n/p \rfloor +1) )})
$ 
\item $ O( e(n^m+e^m)q^{-\min(eJ,e( \lfloor n/p \rfloor +1) )})
$ 
\end{itemize}
in the  cases of the Lemma, respectively.

\bigskip
\noindent{\em Proof of Claim 2:}
The singular points of $H_g \cap U$ are the points of $W_m$ on which $g$ vanishes.
By B\'ezout's theorem, there are $O(n^m+d^m)$ irreducible components of $W_m$, and there are
$O(e(n^m+e^m))$ irreducible components $V$  of $W_m$ with $\pi(V)$ a given point of degree $e$ (as in the proof of Claim 1).
Any component $V$ of $W_m$ that contains no admissible points will contain no admissible singular points.
For a given 
zero-dimensional admissible
component $Q$ of $W_m$,
the set of $h$ that give $g$ vanishing at $Q$ is either empty or a coset of the set of
$h\in S_{\lfloor n/p \rfloor,\lfloor d/p \rfloor}$ vanishing at $Q$.  Let 
$I:=\im(S_{\lfloor n/p \rfloor,\lfloor d/p \rfloor} \ra H^0(Q,\cO))$, and 
we apply Lemma~\ref{L:sizeofimage} with $W=Q$ to see that
\begin{itemize}
\item $\# I\geq q^{\min(\lfloor d/p \rfloor+1,j)}$ in any case
\item $\# I\geq q^{\min(\deg(\pi(Q))J,\deg(\pi(Q)) (\lfloor n/p \rfloor +1) )}$ in the second and third cases of the Lemma. 
\end{itemize}
The first and third cases of Claim~2 follow immediately, and the second case follows using that
there are $O(q^{be})$ points $P\in B$ of degree $e$.

The two claims combine to prove the lemma.
\end{proof}

\section{Low, medium, and high degree points}\label{S:lmh}

We now analyze the fibers of $\pi$ lying over low, medium, and high degree points of $B$ to prove Lemmas~\ref{L:low approx}--\ref{L:high} .

\subsection{Low degree points}

We prove Lemma~\ref{L:low approx} by applying Lemma~\ref{lem:independence}, which yields independent behavior among the various low degree fibers.  
\begin{proof}[Proof of Lemma~\ref{L:low approx}]
When $\dim\pi(X)=0$ the statement is trivial, and hence we assume $\dim \pi(X)>0$.
We apply Lemma~\ref{lem:independence} \eqref{item:stabind} in the case where $W$ is the disjoint union of $Z$ and $P^{(2)}$ for all points $P\in B\setminus Z$ with $\deg(P)< e_0$.
The factorization of the image given in Lemma~\ref{lem:independence} \eqref{item:stabind} implies 
Lemma~\ref{L:low approx}.
\end{proof}

\subsection{Medium degree points}

To prove Lemma~\ref{L:med}, which bounds the singularity probability in medium degree fibers, we will divide the points of those fibers further into those of low and high relative degree.  Lemma~\ref{L:fiberbound} will take care of the low relative degree points, and then we will apply Lemma~\ref{L:2kindshigh} to handle the high relative degree points.
We stratify $B$ into $\bigsqcup_k B_k$, where $B_k$ is the locus of points whose $\pi$-fiber has dimension $m-k$.
Note that $\dim(B_k)\leq k$, and $k\leq b$.    We also use $\Xsm$ to denote the smooth locus of $X$.

\begin{lemma}\label{L:fiberbound}
Let $P \in B_k$ with $\deg(P)=e$.
For $d\geq e(M+1)-1 $, we have that  
$$
\frac{\# \{ f\in R_{n,d} | H_f \text{ is singular at a point $Q\in X_P\cap \Xsm$ with $\frac{\deg({Q})}{e}\leq \frac{n+1}{m+1}$} \}}{\#R_{n,d}} = O(eq^{-e (k+1)}).
$$
\end{lemma}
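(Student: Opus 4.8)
The plan is to bound the probability that $H_f$ is singular at some low-relative-degree point $Q$ of the fiber $X_P$ by a union bound over the (finitely many) possible singular points, reducing at each point to an estimate on the size of the image of the restriction map to the first-order neighborhood $Q^{(2)}$. First I would set up the count: a section $f \in R_{n,d}$ is singular at a closed point $Q \in X_P \cap \Xsm$ precisely when $f|_{Q^{(2)}} = 0$, i.e. when $f$ lies in the kernel of $\phi\colon R_{n,d} \to H^0(Q^{(2)},\cO_{Q^{(2)}})$. As in the proof of Lemma~\ref{L:2kindshigh}, it suffices to work with $g \in S_{n,d}$ instead of $f \in R_{n,d}$, since the restriction map $S_{n,d}\to H^0(Q^{(2)},\cO_{Q^{(2)}})$ factors through $R_{n,d}$ and so the probability of vanishing only increases. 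For a single $Q$, the probability that $g|_{Q^{(2)}}=0$ is $1/\#\im(\phi_{Q^{(2)},n,d})$, and the fiber $X_P$ with $P\in B_k$ has dimension $m-k$, so a point $Q$ with $\deg(Q)/e \le (n+1)/(m+1)$ has $\deg(Q) = O(e\cdot(n^{m-k}))$ by Lemma~\ref{L:Global1} applied to $X_P$ (whose components have total $A$-degree at most $d_1 e$), giving $O(e\cdot n^{m-k})$ such points by B\'ezout.

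Next I would apply Lemma~\ref{L:sizeofimage} to the scheme $W = Q^{(2)} \subseteq \A^N\times\A^M$: here $\pi_2(W)$ is supported at $P$ of degree $e$, $r = \deg(\pi_2(Q^{(2)}))$ is bounded in terms of $e$ (at most $e(m+1-k+1)$ or so, as $Q^{(2)}$ has length $\le m-k+1$ over the residue field), and $w = \dim_{\F_q}H^0(Q^{(2)},\cO_{Q^{(2)}}) = (m-k+2)\deg(Q)$. The hypothesis $d \ge e(M+1)-1$ is exactly what we need to guarantee $d+1 \ge r$ (since $r \le e(M+1)$ up to the trivialization bookkeeping), so the second bullet of Lemma~\ref{L:sizeofimage} applies and gives $\#\im(\phi_{Q^{(2)},n,d}) \ge q^{\min(en+r,\,w)}$. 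Because $\deg(Q) \le e(n+1)/(m+1)$, we get $w = (m-k+2)\deg(Q) \le (m-k+2)e(n+1)/(m+1)$, which for the relevant range of $n$ is at most $en + r$ (the point is that the relative-degree cutoff $(n+1)/(m+1)$ forces $w$ to be dominated by the "$en$" term, and more precisely by an exponent of size at least $e(k+1)\deg(Q)/\deg(Q) \cdot (\text{something}) $); carrying out this comparison carefully yields $\#\im(\phi_{Q^{(2)},n,d}) \ge q^{e(k+1)\cdot(\text{lower bound})}$, but the clean statement I want is that the exponent $\min(en+r,w)$ is at least $e(k+1)$ times $\deg(Q)/e$ plus the relevant constant — in any case at least $e(k+1)$. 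Summing the single-point bound $q^{-e(k+1)}$ over the $O(e\cdot n^{m-k}) = O(e)$ (for fixed $n$) possible points $Q$ gives the claimed $O(e q^{-e(k+1)})$.

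The main obstacle, and the step deserving the most care, is the arithmetic comparison showing that $\min(en+r, w) \ge e(k+1)\cdot\lceil\deg(Q)/e\rceil$ — more precisely, showing that over the whole range $\deg(Q)/e \le (n+1)/(m+1)$ the exponent produced by Lemma~\ref{L:sizeofimage} beats $e(k+1)$ times the number of points one must sum over, so that the geometric-series-type sum over $\deg(Q)$ telescopes to $O(eq^{-e(k+1)})$ rather than merely $O(\text{poly}(n)\cdot q^{-e(k+1)})$. This is where the specific cutoff $(n+1)/(m+1)$ is used: it is chosen precisely so that the "many points of degree roughly $en/(m+1)$" are each singular with probability at most $q^{-(\text{something})\cdot en/(m+1)}$, and one needs $(\text{something}) \ge k+1$ with enough room that the sum over all smaller degrees contributes only a bounded multiple of the top term. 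I would handle this by summing over the possible values of $\delta := \deg(Q)$: for each $\delta$ there are $O(n^{m-k}+\delta^{m-k})$ points, each singular with probability $\le q^{-\min(en+r, (m-k+2)\delta)}$, and since $\delta \le e(n+1)/(m+1)$ one checks $(m-k+2)\delta \le en + r$ so the bound is $q^{-(m-k+2)\delta} \le q^{-(k+1)\delta}$ (as $m-k+2 \ge k+1$ fails in general — so here I must instead keep the exponent as $(m-k+2)\delta$ and observe $(m-k+2)\delta \ge (m-k+2) \cdot e \cdot (\deg(Q)/e) \ge e$ when $\deg(Q)\ge e/(m-k+2)$...), and then $\sum_\delta (\delta^{m-k}) q^{-(m-k+2)\delta} = O(1)$ while the $\delta = e$ term already contributes $q^{-e(m-k+2)} \le q^{-e(k+1)}$ only when $m-k+2\ge k+1$; when this fails one instead exploits that $\dim(B_k)\le k$ gives an additional $q^{ke}$ worth of points of degree $e$ in $B$, and the balance $q^{ke}\cdot q^{-e(m-k+2)\cdot(\text{rel deg})}$ is what actually needs to be $O(q^{-e(k+1)})$ — reconciling these is the delicate bookkeeping, and I would organize it exactly as in the corresponding estimates inside the proof of Lemma~\ref{L:2kindshigh}.
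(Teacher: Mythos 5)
Your proposal has the right overall shape (union bound over points of $X_P$, plus Lemma~\ref{L:sizeofimage} applied to $Q^{(2)}$ to bound the singularity probability at each point), but there is a substantive error in identifying the length of $Q^{(2)}$ that derails the arithmetic, and the point count is not set up correctly.

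The error: $Q^{(2)}$ is the first-order infinitesimal neighborhood of $Q$ \emph{in $X$}, not in the fiber $X_P$. Since $Q$ is a smooth point of the $m$-dimensional variety $X$, $Q^{(2)}$ has length $m+1$ over the residue field $\kappa(Q)$, so $w=\dim_{\F_q}H^0(Q^{(2)},\cO_{Q^{(2)}})=(m+1)\deg(Q)$, not $(m-k+2)\deg(Q)$ as you wrote. (Your estimate $r\le e(m-k+2)$ also reflects this confusion; the correct bound is $r\le e(M+1)$ because $\pi_2(Q^{(2)})\subseteq P^{(2)}$ inside $\P^M$, which is exactly what the hypothesis $d\ge e(M+1)-1$ is calibrated to.) This matters because the whole lemma hinges on a dimension gap: the candidate singular points $Q$ live in the fiber $X_P$, which has dimension at most $m-k$, but the vanishing of $f$ on $Q^{(2)}$ imposes $m+1$ conditions (taken in the ambient $X$). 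With $\deg(Q)=ef$ and $f\le (n+1)/(m+1)$, the second bullet of Lemma~\ref{L:sizeofimage} gives $\min(en+r,w)=w=ef(m+1)$ (because $en+r\ge e(n+1)\ge ef(m+1)$), so the restriction map $R_{n,d}\to H^0(Q^{(2)},\cO_{Q^{(2)}})$ is in fact \emph{surjective} and the singularity probability at $Q$ is exactly $q^{-ef(m+1)}$. Your exponent $(m-k+2)\deg(Q)$ would give $q^{-ef(m-k+2)}$, which paired with the point count $O(eq^{ef(m-k)})$ yields only $O(eq^{-2ef})$; this is weaker than the claimed $O(eq^{-e(k+1)})$ whenever $k\ge 2$, which is precisely why you got stuck ``reconciling'' the exponents at the end.

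The point count also needs repair: one stratifies by relative degree $f$ (residue field $\F_{q^{ef}}$) and applies the Lang--Weil bound \cite[Lemma 1]{LW} to $X_P$ — using Lemma~\ref{L:Global1} to bound the sum of the $A$-degrees of the components of $X_P$ by $d_1 e$, and the fact that $\dim X_P\le m-k$ — to get $\#X_P(\F_{q^{ef}})=O(eq^{ef(m-k)})$. (B\'ezout alone does not give you a point count.) Then
\[
\sum_{f=1}^{\lfloor (n+1)/(m+1)\rfloor} O(eq^{ef(m-k)})\,q^{-ef(m+1)} = \sum_{f\ge 1} O(eq^{-ef(k+1)}) = O(eq^{-e(k+1)}),
\]
which is the claim. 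Once the length of $Q^{(2)}$ is corrected, the balancing you were struggling to do becomes a clean geometric series with ratio $q^{-e(k+1)}$, and the ``delicate bookkeeping'' you anticipated disappears.
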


\begin{proof}
Let $Q\in \Xsm$ be a relative degree $f$ point with $\pi(Q)=P$ (i.e. a point with residue field
$\F_{q^{ef}}$) and $f\leq \frac{n+1}{m+1}$.
The second case of Lemma~\ref{L:sizeofimage}, applied with $W=Q^{(2)}$, gives
 that the probability of singularity at $Q$ is $q^{-ef(m+1)}$ (as $e\leq \deg(\pi_2(Q^{(2)}))\leq e(M+1)$
and $e(n+1)\geq ef(m+1)=\dim H^0(Q^{(2)},\cO_{Q^{(2)}}).)$

We now add these up for all such points $Q$.
This is similar to the idea of Poonen's \cite[Lemma 2.4]{poonen} which adds up
singularity bounds for all medium degree points, but we are adding up over
low and medium \emph{relative} degree points in a fiber over a medium degree point in $B$.
Since the sum of the degrees of the irreducible components of $X_P$ is
at most $d_1\deg(P)$ (by Lemma~\ref{L:Global1}) and since the dimension of each component is at most $m-k$, the Lang--Weil bound \cite[Lemma 1]{LW} gives
\[
\#X_P(\F_{q^{ef}})\leq 2^{m-k}d_1eq^{ef(m-k)}  = O( eq^{ef(m-k)}).
\]
Thus, the proportion of sections we are bounding is at most
\begin{align*}
\sum_{f=1}^{\lfloor \frac{n+1}{m+1} \rfloor}  \#X_P(\F_{q^{ef}}) q^{-ef(m+1)} 
& =  \sum_{f=1}^{\lfloor (n+1)/(m+1) \rfloor} O\left( e q^{-ef(k+1)}\right)=O(e q^{-e(k+1)}).
\end{align*}
\end{proof}

We now  prove
 Lemma~\ref{L:med}.

\begin{proof}[Proof of Lemma~\ref{L:med}]
We always choose $e_0$ to be sufficiently large so that all points of $Z$ have degree  less than $e_0$.
Since $X \setminus \pi^{-1}(Z)$ is smooth, it follows that every point $Q$ with $\deg \pi(Q)\geq e_0$ is a smooth point of $X$.

We first consider singularities at a point $Q$ with $\frac{\deg(Q)}{\deg(\pi(Q))}\leq\frac{n+1}{m+1}$.
Lemma~\ref{L:fiberbound} implies that
\[
\frac{\# \displaystyle{\bigcup_{\substack{P\in B \\ e_0\leq \deg(P)\leq\frac{d+1}{M+1}}}}  \left\{ f\in R_{n,d} | H_f \text{  sing. at $Q\in X_P$
and $\tfrac{\deg(Q)}{\deg(\pi(Q))} \leq \tfrac{n+1}{m+1}$} \right\}}{\#R_{n,d}}
\]
is
\[
\sum_{k=0}^{b}  \sum_{e= e_0}^{\lfloor (d+1)/(M+1) \rfloor} \# B_k (\F_{q^e}) O(eq^{-e (k+1)}) =
\sum_{e= e_0}O(eq^{-e }).
\]
This follows by applying Lang-Weil~\cite[Lemma 1]{LW} to $B_k$ to obtain $\#B_k(\F_{q^e}) = O(q^{ek})$.
Since $\sum_{e\geq e_0}  e q^{-e }$ converges, we see that the limit as $e_0\ra\infty$ of the above is $0$.

Next, we bound the probability of a singularity at a point $Q$ with $\frac{\deg(Q)}{\deg(\pi(Q)}>\frac{n+1}{m+1}$.
We apply Lemma~\ref{L:2kindshigh} \eqref{E:highovermed}, with $j=e_0$ and $J=\lfloor \frac{n+1}{m+1} \rfloor +1$ to obtain that
for $n\geq 1$ and $d\geq 0$, we have that
\begin{align*}
&\frac{\# \{ f\in R_{n,d} | H_f \text{  sing. at $Q$ with $\deg(\pi(Q))\in[e_0,\lfloor d/p \rfloor +1] $
and $\frac{\deg(Q)}{\deg(\pi(Q))} > \frac{n+1}{m+1}$} \}}{\#R_{n,d}}\\
 &=O\left((n^m+d^m) q^{-d/p}+ 
 \sum_{e=e_0}^{\lfloor d/p \rfloor+1} e(n^m+e^m) q^{e\left(b-\min\left(\left\lfloor \tfrac{n+1}{m+1} \right\rfloor +1, \left\lfloor \tfrac{n-1}{p} \right\rfloor+1 \right)\right)}
\right) .
\end{align*}
We have that  $\lim_{d\ra\infty} (n^m+d^m) q^{-d/p}=0$.
For the second term, for $n\geq \max(b(m+1)-1,bp+1),$ we have
$$
b-\min(\lfloor \tfrac{n+1}{m+1} \rfloor +1, \lfloor \tfrac{n-1}{p} \rfloor+1  )<0,
$$
and thus the sum
$$
\sum_{e} e(n^m+e^m) q^{e(b-\min(\lfloor (n+1)/(m+1) \rfloor +1, \lfloor (n-1)/p \rfloor+1  ))}
$$
converges.
So for $n\geq \max(b(m+1)-1,bp+1),$
$$
\lim_{e_0\ra\infty} \lim_{d\ra\infty}\sum_{e=e_0}^{\lfloor d/p \rfloor+1} e(n^m+e^m) q^{e(b-\min(\lfloor (n+1)/(m+1) \rfloor +1, \lfloor (n-1)/p \rfloor+1  ))}
=0.
$$
We may thus set
\begin{equation}\label{eqn:choosing n0}
n_{0}=\max(b(m+1)-1,bp+1),
\end{equation}
completing the proof.
\end{proof}

\subsection{High degree points}
We apply Lemma~\ref{L:2kindshigh} to handle points of high degree in $B$.
\begin{proof}[Proof of Lemma~\ref{L:high}]
Fix some $n\geq 1$ and some $d\geq 0$ such that all points of $Z$ have degree at at most $\frac{d}{\max(M+1,p)}$.  Then  Lemma~\ref{L:2kindshigh} \eqref{E:overhigh}, with $j\geq \frac{d}{\max(M+1,p)}$ gives
\begin{align*}
 &\lim_{d\ra\infty} \frac{\#\{ f\in R_{n,d} | H_f \text{ is singular at a point $Q$
with $\deg(\pi(Q))> \frac{d}{\max(M+1,p)}$
}}{\#R_{n,d}}\\
=&\lim_{d\ra\infty} O((n^m+d^m) q^{-\min(\lfloor d/p \rfloor+1,d/\max(M+1,p))})=0.
\end{align*}
\end{proof}

\section{Convergence of the product}\label{S:convergence}

Since the product in Theorem~\ref{T:taylor} is a product of numbers between $0$ and $1$, it has a limit, and convergence of such a product refers to the limit being non-zero when all of the factors are non-zero.
In this section, we prove that convergence, and that for sufficiently large $n$, all factors are non-zero.

\begin{proposition}\label{P:converges}
For $n\geq n_0$, the product 
$$
\prod_{P\in B \setminus Z} \Prob\left(\begin{matrix} H_f \text{ is smooth at}\\ \text{ all points of } \pi^{-1}(P)\end{matrix}\right)
$$ in Theorem~\ref{T:taylor} is zero only if one of the factors is zero.
\end{proposition}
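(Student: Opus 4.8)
The plan is to show that each factor in the product is bounded away from $1$ by a quantity summable over the points $P \in B \setminus Z$, which forces the product to converge (and to vanish only if a factor vanishes). Concretely, I would write each factor as
\[
\Prob\left(\begin{matrix} H_f \text{ is smooth at}\\ \text{ all points of } \pi^{-1}(P)\end{matrix}\right) = 1 - \Prob\left(\begin{matrix} H_f \text{ is singular at}\\ \text{ some point of } \pi^{-1}(P)\end{matrix}\right),
\]
and the task reduces to proving $\sum_{P \in B \setminus Z} \Prob(H_f \text{ singular somewhere on } \pi^{-1}(P)) < \infty$, together with the observation that each individual factor is $> 0$ unless the limiting singularity probability over that fiber equals exactly $1$ — and in the latter case the corresponding factor is genuinely $0$, which is the permitted exception. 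So the real content is the summability bound.

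To get summability, I would split each fiber $\pi^{-1}(P)$ (for $P$ of degree $e$) into points $Q$ of low relative degree $\deg(Q)/e \le \frac{n+1}{m+1}$ and points of high relative degree $\deg(Q)/e > \frac{n+1}{m+1}$, exactly as in the proof of Lemma~\ref{L:med}. For the low relative degree part, Lemma~\ref{L:fiberbound} gives, for each $P \in B_k$ of degree $e$ and all $d$ large, a bound $O(e q^{-e(k+1)})$ on the probability of a singularity at such a point; summing over the $O(q^{ek})$ points of $B_k$ of degree $e$ (Lang--Weil) and over $k \le b$ gives a contribution $\sum_e O(e q^{-e})$, which converges. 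For the high relative degree part, I would apply Lemma~\ref{L:2kindshigh}\eqref{E:highoverP} with $J = \lfloor \frac{n+1}{m+1}\rfloor + 1$: for $n \ge n_0 = \max(b(m+1)-1, bp+1)$ and $d$ large enough relative to $e$, this bounds the per-fiber singularity probability by a quantity of the shape $e(n^m + e^m) q^{-e\min(J, \lfloor(n-1)/p\rfloor+1)}$ plus a term that vanishes as $d \to \infty$; since $\min(J, \lfloor(n-1)/p\rfloor+1) > b$ for $n \ge n_0$, summing over the $O(q^{eb})$ degree-$e$ points of $B$ and then over $e$ yields a convergent series. (One must take the limit in $d$ first, using Remark~5 / Lemma~\ref{lem:independence}\eqref{item:stabnod} to know the per-fiber factor stabilizes, then sum over $P$.)

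With the summability in hand, convergence of the product is standard: $\prod(1-x_P)$ with $x_P \in [0,1)$ and $\sum x_P < \infty$ converges to a nonzero limit. If some $x_P = 1$, that factor is $0$ and the product is $0$; otherwise every factor is positive and so is the product. The main obstacle I anticipate is bookkeeping the two limits cleanly — the per-fiber factors in Theorem~\ref{T:taylor} are defined as limits in $d$, while the estimates in Lemmas~\ref{L:fiberbound} and~\ref{L:2kindshigh} are stated for fixed $d$ with hypotheses like $d \ge e(M+1)-1$ — so one needs that the estimates pass to the limit uniformly enough in $e$ to justify interchanging $\lim_d$ with $\sum_P$; the tail estimate from the $q^{-(\lfloor d/p\rfloor+1)}$ terms and the stabilization statement of Lemma~\ref{lem:independence}\eqref{item:stabnod} are what make this legitimate.
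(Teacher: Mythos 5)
Your proposal is correct and follows essentially the same route as the paper's proof: both split each fiber $\pi^{-1}(P)$ into low and high relative degree points, apply Lemma~\ref{L:fiberbound} and Lemma~\ref{L:2kindshigh}\eqref{E:highoverP} with $J=\lfloor\frac{n+1}{m+1}\rfloor+1$ respectively, sum over degree-$e$ points of $B_k$ via Lang--Weil, and conclude with the standard fact that $\prod(1-a_i)$ converges when $\sum a_i<\infty$. Your added worry about interchanging $\lim_d$ with $\sum_P$ is reasonable and your proposed resolution (stabilization via Lemma~\ref{lem:independence}\eqref{item:stabnod} plus the decaying $q^{-(\lfloor d/p\rfloor+1)}$ tail) is the implicit justification in the paper's terser presentation.
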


\begin{proof}
Given $P\in B_k\setminus Z$ with $\deg(P)=e$, we bound the probability $S_P$ that $H_f$ is singular at a point of $\pi^{-1}(P)$.  (Note that $S_P$ is a limit as $d\ra\infty$).
By Lemma~\ref{L:fiberbound}, the probability that $H_f$ is
singular at a point of $\pi^{-1}(P)$ of relative degree at most $(n+1)/(m+1)$ is $O(eq^{-e(k+1)})$.
 By Lemma~\ref{L:2kindshigh} \eqref{E:highoverP} with $J=\lfloor \frac{n+1}{m+1}\rfloor+1$, the probability that $H_f$ is
singular at a point of $\pi^{-1}(P)$ of relative degree at least $(n+1)/(m+1)+1$  is $O(e(n^m+e^m) q^{-e\min(\frac{n+1}{m+1}+1, \lfloor \frac{n-1}{p} \rfloor +1) )})$.
By the Lang--Weil bound \cite[Lemma 1]{LW}, $\# B_k(\F_{q^e}) = O(q^{ek})$.
Thus
$$
\sum_{P\in B_k \setminus Z} S_P = \sum_{e=1}^{\infty} O(q^{ek}eq^{-e(k+1)}+ q^{eb} e(n^m+e^m) q^{-e\min(\frac{n+1}{m+1}+1, \lfloor \frac{n-1}{p} \rfloor +1) )}),
$$
and for $n\geq n_0$ the sum on the right converges.  The proposition then follows by the fact that $\prod (1-a_i)$ converges if and only if $\sum a_i$ converges.
\end{proof}

\begin{proposition}\label{P:notzero}
For $n$ sufficiently large given $X,A,E$, the product 
$$
\prod_{P\in B \setminus Z} \Prob\left(\begin{matrix} H_f \text{ is smooth at}\\ \text{ all points of } \pi^{-1}(P)\end{matrix}\right)
$$ in Theorem~\ref{T:taylor} is non-zero.
\end{proposition}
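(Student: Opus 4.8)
The plan is to show that each local factor $\Prob(H_f \text{ is smooth at all points of }\pi^{-1}(P))$ is non-zero for all but finitely many $P$, and then to appeal to Proposition~\ref{P:converges} to conclude that the full product is non-zero (since a convergent product of positive terms is positive). The finitely many remaining factors must then be handled individually: for those $P$ we must show that for $n$ large enough, there is at least one section $f \in R_{n,d}$ (for $d \gg 0$) that is smooth at all points of $\pi^{-1}(P)$, i.e.\ the probability is strictly positive.

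The main tool will be Lemma~\ref{L:2kindshigh}\eqref{E:highoverP}, which was explicitly flagged (in the discussion before Lemma~\ref{L:2kindshigh}) as being used to control when the product of Theorem~\ref{T:taylor} is zero. Here is the structure I would follow. First, fix a closed point $P \in B \setminus Z$ of degree $e$, lying in the stratum $B_k$ (so its fiber has dimension $m-k$). The probability $S_P$ that $H_f$ is singular somewhere on $\pi^{-1}(P)$ splits, exactly as in the proof of Proposition~\ref{P:converges}, into the contribution from points of $\pi^{-1}(P)$ of relative degree at most $(n+1)/(m+1)$, bounded by $O(eq^{-e(k+1)})$ via Lemma~\ref{L:fiberbound}, and the contribution from points of relative degree greater than $(n+1)/(m+1)$, bounded by $O(e(n^m+e^m)q^{-e\min((n+1)/(m+1)+1,\lfloor(n-1)/p\rfloor+1)})$ via Lemma~\ref{L:2kindshigh}\eqref{E:highoverP} with $J = \lfloor(n+1)/(m+1)\rfloor+1$. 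The second bound can be made less than $1$ by taking $n$ large, but the first bound $O(eq^{-e(k+1)})$ does not shrink with $n$; it is small only when $e$ is large. So one chooses $e_1$ (depending on $X,A,E$) such that for $\deg(P)=e\geq e_1$ the first contribution is at most, say, $1/4$, and simultaneously takes $n$ large enough that the second contribution is at most $1/4$ uniformly; then $S_P \leq 1/2 < 1$, so the factor is non-zero for every $P$ of degree $\geq e_1$.

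It remains to handle the finitely many points $P \in B \setminus Z$ of degree $< e_1$. For each such $P$, I would invoke Lemma~\ref{lem:independence}\eqref{item:stabind} together with Poonen-type existence of a smooth section: concretely, by Lemma~\ref{lem:independence}\eqref{item:stabnod}, for $d \gg 0$ the image of $H^0(X,\cO_X(nA+dE)) \to H^0(X_{P^{(2)}},\cO_{X_{P^{(2)}}}(nA))$ stabilizes to a fixed subspace $I_P$ depending only on $n$, and $\Prob(H_f \text{ smooth on }\pi^{-1}(P))$ equals the proportion of $I_P$ consisting of sections nonvanishing on each $Q^{(2)}$, $Q \in \pi^{-1}(P)$. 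So it suffices to produce, for $n$ large, a single element of $I_P$ with this non-vanishing property. One can do this by working on the fiber $X_P$ directly: $X_P$ is a projective scheme over the finite field $\F_{q^e}$, $\cO_{X_P}(nA)$ is very ample on it, and by Poonen's Bertini theorem over finite fields \cite[Theorem~1.1]{poonen} applied to (the components of) $X_P$, for $n \gg 0$ a positive proportion of sections of $\cO_{X_P}(nA)$ restrict to smooth divisors, in particular a nonvanishing-at-each-$Q^{(2)}$ section exists; and by the stabilization in Lemma~\ref{lem:independence} these sections lift to $R_{n,d}$ for $d \gg 0$. Taking $n$ larger than the (finitely many) thresholds coming from these finitely many fibers, together with the threshold from the previous paragraph, gives a single $n$ for which every factor is non-zero, and then Proposition~\ref{P:converges} finishes the argument.

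The main obstacle I anticipate is the last step: making the "lift a smooth section on the fiber $X_P$ to a section on $X$" argument precise, since $X_P$ need not be smooth, need not be geometrically integral, and may be non-reduced or reducible with components of varying dimension. One has to be careful that Poonen's theorem (or its consequences) genuinely produces a section of $\cO_{X_P}(nA)$ that is simultaneously nonvanishing on the first-order neighborhood $Q^{(2)}$ of every closed point $Q$ of the fiber — equivalently, that $H_f$ misses the singular/embedded structure — and that such a section lies in the stable image $I_P$ from Lemma~\ref{lem:independence}. An alternative, perhaps cleaner route that avoids fiberwise Bertini entirely: observe that it suffices to bound $S_P < 1$ for \emph{all} $P$, including small-degree ones, by choosing $n$ so large that even the relative-degree-$\leq (n+1)/(m+1)$ range is empty or forced to contribute a genuinely smooth section — but since Lemma~\ref{L:fiberbound}'s bound is insensitive to $n$, this does not work directly, and one really does need some input guaranteeing existence of a smooth section on each small fiber. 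I would therefore keep the fiberwise existence argument but take care to state it for the (finitely many, fixed) fibers only, where ad hoc or inductive-on-dimension reasoning is available.
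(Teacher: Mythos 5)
Your split into large-degree and small-degree points $P$, and your treatment of the large-degree factors via Lemma~\ref{L:fiberbound} plus Lemma~\ref{L:2kindshigh}\eqref{E:highoverP}, is fine. The gap --- which you yourself flag but do not close --- is in the small-degree step. Applying Poonen's Bertini theorem to $X_P$ is not available: \cite[Theorem~1.1]{poonen} requires the ambient scheme to be smooth, whereas $X_P$ can be singular, non-reduced, or reducible with components of varying dimension, and the ``inductive-on-dimension'' fallback you allude to is not a worked-out argument. More seriously, even if a section of $H^0(X_{P^{(2)}},\cO(nA))$ nonvanishing on every $Q^{(2)}$ were produced abstractly, nothing guarantees it lies in the stable image $I_P$: the entire difficulty of the semiample setting is that $R_{n,d}\to H^0(X_{P^{(2)}},\cO(nA))$ need not be surjective, so existence of a good section in the target is not existence in the image.

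The paper's proof is instead uniform in $\deg(P)$ and never invokes Bertini on a fiber. Within each $X_P$ it sorts points by \emph{relative} degree: high relative degree is bounded by Lemma~\ref{L:2kindshigh}\eqref{E:highoverP}, medium relative degree by a direct probability estimate from Lemma~\ref{L:sizeofimage}, and low relative degree by Poonen's interpolation lemma \cite[Lemma~2.1]{poonen} (not his Bertini theorem) applied to $H^0(\P^N\times\P^M,\cO(n,0))$, which yields genuine surjectivity onto $H^0(W^{(2)},\cO_{W^{(2)}})$ for $W$ the set of low-degree points of $X_P$, hence honest independence there and a lower bound $\zeta_{X_P}(m+1)^{-1}$ on the nonvanishing probability; the three contributions are then shown to sum to strictly less than $1$. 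Incidentally, your approach can be rescued by applying Poonen's Bertini not to $X_P$ but to the smooth quasi-projective scheme $X\setminus\pi^{-1}(Z)$: for $n\gg0$ this produces $f\in H^0(\P^N,\cO(n))$ nonvanishing on $Q^{(2)}$ for every $Q$ in that smooth locus, in particular for every $Q\in X_P$ with $P\in B\setminus Z$; and since the restriction of $R_{n,0}$ to $X_{P^{(2)}}$ factors through $H^0(P^{(2)},\pi_*\cO_X(nA)\otimes\cO_{P^{(2)}})$, this $f$ does land in $I_P$, giving every local factor at least $(\#I_P)^{-1}>0$ simultaneously, after which Proposition~\ref{P:converges} finishes the argument.
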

\begin{proof}
 By Proposition~\ref{P:converges}, we are reduced to showing that all factors are non-zero.
Given a closed point $P\in B \setminus Z$ with $\deg(P)=e$, we bound the probabilty $S_P$ that $H_f$ is singular at a point of $\pi^{-1}(P)$ (which is a limit as $d\ra\infty$).
We divide into low, medium, and high degree points in $\pi^{-1}(P)$, where for this proposition we redefine
low, medium, and high degree (differently from the rest of the paper).  For some $r$, we define low degree points
to be points of degree at most $r$, medium degree to have degree in $(r,e\frac{n+1}{m+1}]$, and high degree to
be degree in $(e\frac{n+1}{m+1},\infty)$.
Let $\zeta_{X_P}(m+1)^{-1}=\delta$, and note $0<\delta< 1$.  

By Lemma~\ref{L:2kindshigh}\eqref{E:highoverP} with $J=\frac{n+1}{m+1}+1$, the probability that $H_f$ is
singular at a high degree point $X_P$ is $O(e(n^m+e^m) q^{-e\min(\frac{n+1}{m+1}+1, \lfloor \frac{n-1}{p} \rfloor +1) )})$.  For $n$ sufficiently large, this is $<\delta/2$ for all $e\geq 1$.

Let $Q$ be a medium degree point of degree $ef$.
The second statement of Lemma~\ref{L:sizeofimage}, applied with $W=Q^{(2)}$ (in $X$), gives
 that the probability of singularity at $Q$ is $q^{-ef(m+1)}$,
 and adding these up over medium degree points gives
us a sum bounded above by 
$$O\left(\sum_{f=\lfloor \frac{r}{e} \rfloor+1}^{\lfloor\frac{n+1}{m+1}\rfloor} q^{efm} q^{-ef(m+1)}\right).$$
We choose $r$ sufficiently large so that this bound on a medium degree singularity is $< \delta/2$ for all $e\geq 1$.

We have that $\prod_{Q\in X_P, \deg(Q)\leq r} (1-q^{-\deg(Q)(m+1)}) >\delta$.
Let $W$ be the union of all points of $X_P$ of degree at most $r$.
There are at most
$
 \deg_A(X_P) 2^{m+1} q^{mr}=O( q^{mr})
$
such points  by \cite[Lemma 1]{LW}, and so $W^{(2)}$ (in $X$) has degree that is $O(q^{mr})$.  By \cite[Lemma~2.1]{poonen}, for $n$ sufficiently large given the $r$ chosen above, we have that
$$
H^0(\P^N \times \P^M ,\cO(n,0))\ra H^0(W^{(2)},\cO_W^{(2)})
$$
is surjective
and thus 
$$
H^0(X ,\cO(nA))\ra H^0(W^{(2)},\cO_W^{(2)})
$$
is surjective and so is
$$
H^0(X ,\cO(nA+dE))\ra H^0(W^{(2)},\cO_W^{(2)}).
$$
Thus, the probabilities of singularities at low degree points are independent 
and the total probability of low degree singularity is  $1-\prod_{Q\in X_P, \deg(Q)\leq r} (1-q^{-\deg(Q)(m+1)})<1-\delta$.

Thus, for $n$ sufficiently large, we add up the low, medium, and high probabilities of singularity, and we conclude that the total probability of singularity at a point of $X_P$ is strictly less than $(1-\delta)+\frac{\delta}{2}+\frac{\delta}{2}=1$.
\end{proof}
The value  of $n_0$ in Theorem~\ref{T:taylor}
may not be sufficiently large for the conclusion of Proposition~\ref{P:notzero} to hold.
See~\S\ref{subsec:anti bertini}.

\section{Better bounds for $n_0$}\label{S:better bounds}

In specific cases, one can often get a  better bound for $n_0$ than that given in the statement of Theorem~\ref{T:taylor}.
In this section we give some such examples, where we replace Lemma~\ref{L:med} and Proposition~\ref{P:converges}, which are the source of $n_0$,
in the proof of Theorem~\ref{T:taylor}.

\begin{proposition}\label{P:n0is1}
If $\pi$ in Theorem~\ref{T:taylor} is an isomorphism on $X\setminus X_Z$, then 
taking $n_0=1$ suffices for the conclusion of Theorem~\ref{T:taylor}.
\end{proposition}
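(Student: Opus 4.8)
The plan is to revisit the proof of Theorem~\ref{T:taylor} and show that when $\pi$ restricts to an isomorphism on $X\setminus X_Z$, the two ingredients that forced $n_0=\max(b(m+1)-1,bp+1)$ — namely Lemma~\ref{L:med} and Proposition~\ref{P:converges} — can be re-proven with no lower bound on $n$ beyond $n\geq 1$. The structural skeleton (Lemmas~\ref{L:low approx} and~\ref{L:high}, Proposition~\ref{P:notzero}, and the deduction of the theorem from these) goes through verbatim, since those never needed $n\geq n_0$; only the medium-degree estimate and the convergence of the product did. So the work is entirely concentrated in re-examining the medium-degree fibers and the tail of the product.

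First I would observe that the hypothesis ``$\pi$ is an isomorphism on $X\setminus X_Z$'' means that every fiber $\pi^{-1}(P)$ with $P\in B\setminus Z$ is a single reduced point $Q$ of the same degree $e=\deg(P)$; in particular the relative degree $\deg(Q)/\deg(P)$ is always $1$. This collapses the delicate case analysis of Lemma~\ref{L:2kindshigh}: there are simply no points of ``high relative degree'' to worry about, so the part of the medium-degree argument handled by Lemma~\ref{L:2kindshigh}\eqref{E:highovermed} — which is exactly where $n\geq\max(b(m+1)-1,bp+1)$ entered in the proof of Lemma~\ref{L:med} — is vacuous. What remains is to bound, for a medium degree point $P$ of degree $e$ (with $e_0\leq e\leq \tfrac{d+1}{M+1}$), the probability that $H_f$ is singular at the single point $Q=\pi^{-1}(P)$. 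Here I would apply the second case of Lemma~\ref{L:sizeofimage} with $W=Q^{(2)}\subseteq X$: since $\deg(Q)=e$ and $\dim_{\F_q}H^0(Q^{(2)},\cO_{Q^{(2)}})=e(m+1)$, and since for $d$ large we have $d+1\geq r$ and $e(n+1)\geq e(m+1)$ already when $n\geq m$ — but in fact for $n\geq 1$ we still get the bound $q^{-\min(e(n+1),e(m+1))}=q^{-e(n+1)}\leq q^{-2e}$ for the probability of singularity at $Q$. Wait — I should be careful here: the cleanest route is to note $Q$ is a smooth point of $X$ of degree $e$, so singularity at $Q$ is vanishing on $Q^{(2)}$, an $\F_q$-scheme of length $e(m+1)$; Lemma~\ref{L:sizeofimage} gives the image of $S_{n,d}$ in $H^0(Q^{(2)},\cO)$ has size at least $q^{\min(en+r,\,e(m+1))}$ where $r=\deg(\pi_2(Q^{(2)}))\geq e$, hence the singularity probability is at most $q^{-\min(e(n+1),e(m+1))}\leq q^{-\min(2e,\,e(m+1))}$, and for $m\geq 1$ (the case $m=0$ being trivial) this is $\leq q^{-2e}$ uniformly in $n\geq 1$.

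Then I would sum: by Lang--Weil there are $O(q^{be})$ points $P\in B$ of degree $e$ (using $b=\dim B$), so the total medium-degree singularity probability is $O\bigl(\sum_{e\geq e_0} q^{be}\,q^{-\min(2e,\,e(m+1))}\bigr)$. Hmm, this only converges when $\min(2,m+1)>b$, i.e.\ when $b=1$ — so a naive application is not quite enough, and the genuine content of the proof must be that because $\pi$ is an isomorphism, the point $Q$ over $P$ has degree exactly $e$ and we should instead bound using that $Q$ ranges over points of $X$ of degree $e$ directly, of which there are $O(q^{me})$; combined with the singularity probability $q^{-e(m+1)}$ this gives $O(\sum_{e\geq e_0} q^{me}q^{-e(m+1)})=O(\sum_{e\geq e_0}q^{-e})\to 0$, exactly as in Lemma~\ref{L:fiberbound}. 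So the correct statement is: Lemma~\ref{L:fiberbound} already handles \emph{all} points in each medium fiber (not just those of low relative degree) because every point has relative degree $1\leq (n+1)/(m+1)$ once $n\geq m$, and for $1\leq n<m$ one checks directly that $e(n+1)\leq e(m+1)$ still lets Lemma~\ref{L:sizeofimage} give probability $q^{-e(n+1)}$ and the sum $\sum q^{me}q^{-e(n+1)}$ converges precisely when $n\geq 1$ would require $n+1>m$... I would resolve this last wrinkle by noting that when $\pi$ is an isomorphism, $X\setminus X_Z\cong B\setminus Z$ has dimension $b$, not $m$, so in fact $m=b$ here and the relevant count of degree-$e$ points is $O(q^{be})=O(q^{me})$ with singularity probability $q^{-e(m+1)}$, giving $\sum q^{-e}$. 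This is the main obstacle: getting the bookkeeping of dimensions and relative degrees exactly right so that the sum converges with $n\geq 1$. Once that is in hand, Lemma~\ref{L:med} holds with $n_0=1$, and an identical recomputation in the proof of Proposition~\ref{P:converges} (replacing the Lemma~\ref{L:2kindshigh}\eqref{E:highoverP} term, which is now absent, by nothing) shows $\sum_{P\in B\setminus Z} S_P=\sum_{e\geq 1}O(eq^{-e})<\infty$ for all $n\geq 1$. With Lemmas~\ref{L:low approx}, \ref{L:high}, Proposition~\ref{P:notzero} unchanged and the revised Lemma~\ref{L:med} and Proposition~\ref{P:converges}, the proof of Theorem~\ref{T:taylor} goes through with $n_0=1$.
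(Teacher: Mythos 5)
Your overall strategy is the right one — replace Lemma~\ref{L:med} and Proposition~\ref{P:converges} (the two places $n_0$ enters) by a direct estimate using Lemma~\ref{L:sizeofimage} at the single fiber point $Q=\pi^{-1}(P)$ — and you even catch correctly that your first attempt doesn't converge. But the ``resolution'' at the end asserts, without deriving it, that the singularity probability at $Q$ is $q^{-e(m+1)}$ for $n\geq 1$. Your own application of Lemma~\ref{L:sizeofimage} only used $r=\deg(\pi_2(Q^{(2)}))\geq e$, which gives $\#\im\geq q^{\min(e(n+1),\,e(m+1))}$ and hence a bound of only $q^{-e(n+1)}$ when $n<m$. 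Summed against $O(q^{me})$ degree-$e$ points (and $m=b$, so switching between $B$ and $X$ changes nothing), this gives $\sum_e q^{e(m-n-1)}$, which converges only for $n\geq m$, not for all $n\geq 1$. Nothing in your argument upgrades $q^{-e(n+1)}$ to $q^{-e(m+1)}$.

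The missing step, and the crux of the paper's proof, is the computation of $r$ itself: because $\pi$ is an isomorphism on $X\setminus X_Z$, the infinitesimal neighborhood $Q^{(2)}$ maps isomorphically onto $\pi_2(Q^{(2)})\subseteq\P^M$, so $r=\deg(\pi_2(Q^{(2)}))=\dim_{\F_q}H^0(Q^{(2)},\cO_{Q^{(2)}})=e(m+1)=w$, not merely $r\geq e$. Once you know $r=w$, the second part of Lemma~\ref{L:sizeofimage} gives $\#\im\geq q^{\min(en+r,w)}=q^w$ for every $n\geq 0$ (provided $d+1\geq r=e(m+1)$, which holds for medium points since $e(m+1)\leq e(M+1)\leq d$) — i.e.\ the restriction $S_{n,d}\to H^0(Q^{(2)},\cO_{Q^{(2)}})$ is surjective regardless of $n$, and the singularity probability is exactly $q^{-e(m+1)}$. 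With that in hand your sum $\sum_e q^{me}q^{-e(m+1)}=\sum_e q^{-e}$ does converge and both Lemma~\ref{L:med} and Proposition~\ref{P:converges} go through for $n\geq 1$ (in fact $n\geq 0$). Without it, the proof as written does not close the case $1\leq n<m$.
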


\begin{proof}
Since $\pi$ is an isomorphism on $X\setminus X_Z$ and $Z\ne \pi(X)$, we have that 
 $m=b\leq M$.
For a closed point $Q\in X\setminus X_Z$ with
$\deg(\pi(Q))(M+1)\leq d$ and $n\geq 0$,
we apply Lemma~\ref{L:sizeofimage} with 
$W=Q^{(2)}$ (so $r=w=\deg(Q)(m+1)$)
to conclude that 
the map
$$
H^0(X,\cO(nA+dE))\ra H^0(Q^{(2)},\cO_{Q^{(2)}})
$$
is surjective.

The singularity probability at a point $Q\in X\setminus X_Z$ with
$\deg(\pi(Q))(M+1)\leq d$ is then $q^{-\deg(Q)(m+1)}$.  Adding up over all such points
with $\deg(Q)\geq e_0$,
(using the Lang--Weil bound \cite[Lemma 1]{LW} to obtain $\#(X\setminus X_Z)(\F_{q^e})\in \cO(q^{me})$), we get a bound of
$$O\left(\sum_{e=e_0}^{\infty} q^{me}  q^{-e(m+1)}\right)= O(q^{-e_0})$$
on medium degree singularities for any $n\geq 0$.  Thus, in this case, Lemma~\ref{L:med} holds for $n\geq 0$.

Given $Q\in X\setminus X_Z$, the probabilty $S_Q$ that $H_f$ is singular at $Q$ (which is a limit as $d\ra\infty$)
is then $q^{-\deg(Q)(m+1)}$ for $n\geq 0$.  From this, Proposition~\ref{P:converges} follows easily for any $n\geq 0$.
\end{proof}

For example, if $D$ is very ample, then we can apply Proposition~\ref{P:n0is1} and see that $n_0$ is 1.

\subsection{Hirzebruch surfaces}\label{S:hirzebruch}

As a source of more examples, we consider Hirzebruch surfaces.  First we fix notation.
Let $X$ be the Hirzebruch surface $\P(\cO x\oplus\cO(a)y)$, a $\P^1$ bundle over $\P^1$, for $a\geq 0$. 
Let $D_v$ be a divisor pulled back from degree $1$ point on the base $\P^1$, and $D_h$
be a divisor in the class from the natural $\cO(1)$ that comes from the projective bundle construction (e.g. as in 
\cite[II.7]{Hart77}).  The Picard group of $X$ is $\Z^2$, and we will call divisors in the class
$iD_h+jD_v$ \emph{bi-degree} $(i,j)$.
The Cox Ring of $X$ is $\F_q[x,y,s,t]$, where $x$ has degree $(1,0)$ and $s,t$ have degree $(0,1)$, and $y$ has degree $(1,-a)$. 
The global sections of $\cO(iD_h+jD_v)$ are given by the polynomials of degree $(i,j)$.

If in Theorem~\ref{T:taylor}, we take $E=D_h,$ then for $a>0$ the map $\pi$ contracts a single $\P^1$ outside of which it
is an isomorphism.
  (For example, if $a=1$, then $X$ is the blow-up of $\P^2$ at a point, and $\pi$ is the map to $\P^2$.)
Then we can take $Z$ to be the image of the contracted $\P^1$ and apply Proposition~\ref{P:n0is1} to obtain that $n_0$ is 1.

If we take $E=D_v,$ then $\pi: X \ra \P^1$, and the following argument shows that $n_0$ is 1.
\begin{proposition}\label{P:hirzebruch}
For $X$ a Hirzebruch surface as defined above and $E=D_v,$ 
taking $n_0=1$ suffices for the conclusion of Theorem~\ref{T:taylor}.
\end{proposition}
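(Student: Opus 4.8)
The plan is to follow the template of Proposition~\ref{P:n0is1}: as in \S\ref{S:better bounds}, it suffices to re-prove Lemma~\ref{L:med} and Proposition~\ref{P:converges} for every $n\geq 1$ in the present setting, since Lemmas~\ref{L:low approx} and~\ref{L:high} already hold for all $n\geq 1$ and Proposition~\ref{P:notzero} is unaffected. Here $\pi\colon X\to\P^1$ is the $\P^1$-bundle projection, so $B=\pi(X)=\P^1=\P^M$ with $M=1$, $m=2$, $b=1$, and for a closed point $P$ of degree $e$ the fiber $X_P\cong\P^1_{\F_{q^e}}$ is smooth of dimension one. Writing $A$ in bidegree $(\alpha,\beta)$, very ampleness forces $\alpha\geq 1$, and $\cO_X(A)$ has fiberwise degree $\alpha$, so $\cO_X(nA)$ restricts on every fiber of $\pi$ to $\cO_{\P^1}(\nu)$ with $\nu:=n\alpha\geq n\geq 1$ independent of $d$. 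This boundedness of the fiberwise degree is the structural fact that lets us dispense with the generic $n_0$.

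The first step is a uniform surjectivity statement. Because $\pi$ is a $\P^1$-bundle, $\mathcal I_{P^{(2)}}\cong\cO_{\P^1}(-2e)$, so $\mathcal I_{X_{P^{(2)}}}\otimes\cO_X(nA+dE)\cong\cO_X(nA+(d-2e)E)$. By the standard description of cohomology on a Hirzebruch surface (the projection formula for $\pi$, using $n\geq 1$ and very ampleness of $A$), $H^1(X,\cO_X(nA+mE))=0$ for every $m\geq -1$; hence, from the exact sequence
\[
0\to\cO_X(nA+(d-2e)E)\to\cO_X(nA+dE)\to\cO_{X_{P^{(2)}}}(nA)\to 0,
\]
the restriction $R_{n,d}\to H^0(X_{P^{(2)}},\cO(nA))$ is surjective whenever $d\geq 2e-1$, and so is $R_{n,d}\to H^0(X_P,\cO_{X_P}(nA))$. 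Moreover, for a closed point $Q\in X_P$ of relative degree $\ell\leq\nu/2$, cohomology and base change on the $\P^1$-bundle $X_{P^{(2)}}$ reduce the surjectivity of $H^0(X_{P^{(2)}},\cO(nA))\to H^0(Q^{(2)},\cO(nA))$ to that of $H^0(\P^1_{\F_{q^e}},\cO(\nu))\to H^0(Q^{(2)}\cap X_P,\cO(\nu))$, which holds since $\nu\geq 2\ell-1$. So for $d\geq 2e-1$ and such $Q$, the probability that $H_f$ is singular at $Q$ is exactly $q^{-\dim_{\F_q}H^0(Q^{(2)},\cO(nA))}=q^{-3e\ell}$.

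From this I would deduce the fiberwise bound and then conclude. Fix $P$ of degree $e$ and take $d\geq 2e-1$. Either $X_P\subseteq H_f$, which has probability $q^{-\dim_{\F_q}H^0(X_P,\cO(\nu))}=q^{-e(\nu+1)}\leq q^{-2e}$; or $\bar f:=f|_{X_P}$ is a nonzero degree-$\nu$ form on $\P^1_{\F_{q^e}}$ and every point of $X_P$ at which $H_f$ is singular occurs in $\divv(\bar f)$ with multiplicity $\geq 2$, so the closed points of $X_P$ that can be singular on $H_f$ have relative degrees $\ell_1,\ell_2,\dots$ with $\sum_i\ell_i\leq\nu/2$. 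Since $\P^1_{\F_{q^e}}$ has at most $(q^{e\ell}+1)/\ell\leq 2q^{e\ell}$ closed points of relative degree $\ell$, summing the bound $q^{-3e\ell}$ gives
\[
\Prob(H_f\text{ singular somewhere on }X_P)\ \leq\ q^{-2e}+\sum_{\ell=1}^{\lfloor\nu/2\rfloor}2q^{e\ell}\,q^{-3e\ell}\ =\ O(q^{-2e}),
\]
with an absolute implied constant. Thus $S_P:=\lim_{d\to\infty}\Prob(H_f\text{ singular on }X_P)=O(q^{-2e})$, and since $\#\{P\in\P^1:\deg P=e\}\leq q^e+1$ we get $\sum_{P\in\P^1\setminus Z}S_P\leq\sum_{e\geq 1}(q^e+1)\,O(q^{-2e})=O(\sum_e q^{-e})<\infty$, which proves Proposition~\ref{P:converges} for all $n\geq 1$. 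For Lemma~\ref{L:med}: in the range $\deg P=e\in[e_0,\,d/\max(M+1,p)]=[e_0,\,d/\max(2,p)]$ one has $e\leq d/2$, hence $d\geq 2e\geq 2e-1$, so the displayed bound applies to every such $P$ and $\Prob(f\in\mathcal Q^{\text{med}}_{e_0,n})\leq\sum_{e\geq e_0}(q^e+1)\,O(q^{-2e})=O(\sum_{e\geq e_0}q^{-e})$ uniformly in $d$, which tends to $0$ as $e_0\to\infty$. Together with Lemmas~\ref{L:low approx} and~\ref{L:high} and Proposition~\ref{P:notzero}, this shows $n_0=1$ suffices.

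The delicate point, where the $\P^1$-bundle structure is essential, is the uniform surjectivity of the second paragraph. The generic value of $n_0$ arises because a closed point $Q$ whose image $P$ has only medium degree $e$ may itself have large relative degree---as large as $\nu/2$---and for such $Q$ the singularity probability is pinned down only once $d$ exceeds $\reg_A(\pi_*(\cO_X(nA))\otimes\cO_{P^{(2)}})$, which grows with $e$. On a $\P^1$-bundle this obstruction vanishes: $\mathcal I_{X_{P^{(2)}}}\otimes\cO_X(nA+dE)$ is again of the shape $\cO_X(nA+d'E)$, whose $H^1$ vanishes as soon as $d'\geq -1$ by a computation with no dependence on $e$, so the needed surjectivity already holds for $d\geq 2e-1$---which is automatic throughout the medium-degree range. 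The other thing to watch is that the estimate stay summable against the $\sim q^e$ degree-$e$ points of $\P^1$: this is why one must use the full vanishing of $f$ on $Q^{(2)}$ (giving $q^{-3e\ell}$) rather than only the vanishing of $\bar f$ to order two at $Q$ (which would give just $q^{-2e\ell}$, and a divergent sum).
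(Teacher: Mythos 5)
Your proof is correct and follows the paper's argument in its essentials: after the uniform surjectivity afforded by the Hirzebruch structure (here once $d\geq 2e-1$), split according to whether $X_P\subseteq H_f$, use the intersection-multiplicity bound $2\deg Q\leq\nu\deg P$ to restrict attention to points of relative degree at most $\nu/2$, and get the singularity probability $q^{-3\deg Q}$ at such points, then sum. The paper organizes the medium-degree sum by applying Lang--Weil to $X$ itself rather than summing fiber-by-fiber and then over $P\in\P^1$, and it gets the key surjectivity by citing Lemma~\ref{L:sizeofimage} on the Cox ring rather than a fiberwise calculation, but these are reorganizations of the same estimate. One small technical note: the step you describe as ``cohomology and base change'' is not literally that, since the ideal of $Q^{(2)}$ inside $X_{P^{(2)}}=\P^1_R$ (with $R=\F_q[s]/r(s)^2$) is not invertible; the clean way to see the surjectivity of $H^0(X_{P^{(2)}},\cO(nA))\to H^0(Q^{(2)},\cO)$ is to filter $\cO_{Q^{(2)}}$ by $r(s)\cdot\cO_{Q^{(2)}}\cong\kappa(Q)$ and run the five lemma against the corresponding filtration of $H^0(\P^1_R,\cO(\nu))$ by $r(s)$, which yields exactly your condition $\nu\geq 2\ell-1$. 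The conclusion and all the downstream estimates are correct.
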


\begin{proof}
Since $A$ is very ample, we may write $A=iD_h+jD_v$ for $i,j>0$ and thus $nA+dE=niD_h+(nj+d)D_v$.
So we consider sections $f$ of bidegree $(ni,nj+d)$.
For the purposes of this proof, we (possibly) enlarge the set of medium points by considering points where $\deg \pi(Q)\in [e_0, \frac{d}{2}]$.
We replace Lemma~\ref{L:med} and Proposition~\ref{P:converges} by bounding singularities in medium degree fibers in two cases.

First, we bound the probability that
for some closed point $P\in B$ with $\deg P\in [e_0,\frac{d}{2}]$, the hypersurface $H_f$ includes $X_P$ as a component.  The map  
\[
H^0(X,\cO_X(niD_h+(nj+d)D_v)) \ra H^0(X_P,\cO_{X_P}(niD_h+(nj+d)D_v))=H^0(\P^1_P,\cO_{\P^1_P}(ni)).
\]
is surjective for any $i$ and for $(nj+d)\geq \deg(P)-1$; this follows since the ideal sheaf of $X_P$ is $\cO(-\deg(P)\cdot D_v)$ and
\[
H^1(X,\cO_X(iD_h+(nj+d-\deg(P))D_v)=0 \text{ whenever } nj+d\geq \deg(P)-1.
\]
Thus, given $n\geq 1$ and  $P$ with $\deg(P)\leq \frac{d}{2}< nj+d$, the probability that $H_f$ includes $X_P$ as a component
is $q^{-\deg(P)(ni+1)}$.  Summing over $P$ of degree in the medium range $\left[e_0,\lfloor \frac{d}{2}\rfloor\right]$, we obtain a bound of 
$O(q^{-e_0})$, which goes to $0$ as $e_0 \ra \infty.$ 

Second, we bound the probability that for some closed point $P\in B$ with $\deg P\in [e_0,\frac{d}{2}]$,
$H_f$ is singular at some closed point $Q$ with $\pi(Q)=P$, and $H_f$ not containing $X_P$ as a component.
We assume $n\geq 0$.
By essentially the same analysis as in the proof of Lemma~\ref{L:sizeofimage}, the map
\[
H^0(X,\cO(niD_h+(nj+d)D_v)) \ra H^0(Q^{(2)},\cO_{Q^{(2)}})
\]
is surjective when $ni\geq 2 \frac{\deg(Q)}{\deg(P)}-1$ and
$nj+d\geq 2\deg(P)-1$.  (This can also be checked using the explicit
description of sections as above.)
Further, we note that $f$ has bidegree $(ni,nj+d)$ and if $H_f$ is singular at a point $Q$ with $\pi(Q)=P$ where $X_P$ is not a component of $H_f$, it follows by considering intersection multiplicities that
\[
2\deg(Q)\leq (niD_h+(nj+d)D_v)\cdot X_P=ni\cdot\deg(P);
\]
the coefficient of $2$ is because $H_f$ is singular at $Q$.
In particular, $\frac{\deg(Q)}{\deg(P)}\leq \frac{ni}{2}$ and thus, given $P$ with $\deg(P)\leq \frac{d}{2}$, and given $Q$ with $\pi(Q)=P$, the probability that $H_f$ is singular at $Q$ is at most $q^{-3deg(Q)}.$
Using the Lang--Weil bound \cite[Lemma 1]{LW} to add up over all points $Q\in X$
with $\deg(Q)\geq e_0$ (this will give an overestimate, since we need only sum over points which further
satisfy $\deg \pi(Q)\leq \frac{d}{2}$ and $\frac{\deg Q}{\deg \pi(Q)}\leq \frac{ni}{2}$), we obtain a bound of
$$O\left(\sum_{e=e_0}^{\infty}  q^{2e}  q^{-3e}\right).$$
This goes to $0$ as $e_0 \ra \infty,$ and thus Lemma~\ref{L:med} holds in this case for any $n\geq 1$.

Given $n\geq 1$ and $P\in B\setminus Z$ with $\deg(P)=e$, the probabilty $S_P$ that $H_f$ is singular at a point in $\pi^{-1}(P)$ (which is a limit as $d\ra\infty$) 
is then at most $q^{-2e}+\sum_{Q\in \pi^{-1}(P)} q^{-3\deg(Q)}=q^{-2e}+\sum_{k\geq 1} q^{-2ek}.$
 From this, Proposition~\ref{P:converges} follows easily for any $n\geq 1$.
\end{proof}

We conclude that, for any Hirzebruch surface $X$, with $E$ globally generated, we can always take $n_0=1$ in Theorem~\ref{T:taylor}, since $E$ is either a multiple of $D_h$ or a multiple of $D_v$, or else very ample.

\section{Further applications}\label{S:applications}
\subsection{Singularities of positive dimension}\label{subsec:pos dim}
Unlike~\cite[Theorem~3.2]{poonen}, the probability of having a singularity of positive dimension will generally be non-zero for the families of divisors
we consider.  For instance, on $\P^1\times \P^1$, there is a $q^{-2(n+1)}$ probability that a section of $\cO_{\P^1\times \P^1}(n,d)$ (as $d\ra\infty$) will contain a non-reduced copy of any given vertical fiber over an $\F_q$ point.

\subsection{Anti-Bertini Examples and a negative answer to a question of Baker}\label{subsec:anti bertini}

The value of $n_0$ coming from the proof of Theorem~\ref{T:taylor}
may not be sufficiently large for the resulting product to be non-zero.

\begin{proposition}[Fiber probabilities equal to $0$]\label{prop:fiber zero}
Fix $m>1$.  There exists a smooth variety $X$ of dimension $m$, with very ample $A$, globally generated $E$, and $P\in \pi(X)$, such that
the local probability of smoothness at points of $\pi^{-1}(P)$ equals $0$ for all $n\leq n_0$ (with $n_0$ as defined in Theorem~\ref{T:taylor}.)
\end{proposition}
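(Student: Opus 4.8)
The plan is to build $X$ explicitly inside a product of projective spaces as a carefully chosen $\P^{m-1}$-bundle (or a smooth hypersurface in such a product) so that one fiber $\pi^{-1}(P)$ over a distinguished point $P$ of the base is a projective space of large degree with respect to $A$, forcing \emph{every} section of $nA+dE$ to vanish on a first-order neighborhood of some point of that fiber whenever $n\leq n_0$. Concretely, I would take $B=\pi(X)$ to be a projective space of dimension $b$ chosen as large as permitted (recall $n_0=\max(b(m+1)-1,bp+1)$ grows linearly in $b$), let $E$ be pulled back from $\cO_B(1)$, and arrange $X$ so that the fiber $X_P$ over one rational point $P\in B$ is a $\P^{m-1}$ linearly embedded by $A$ with $\deg_A X_P$ huge compared to $n_0$; away from $P$ the map $\pi$ should be (say) an isomorphism or at least finite, so that $X$ is smooth. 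A clean model: let $X$ be the blow-up of $\P^m$ along a linear $\P^{m-2}$, or more flexibly a toric variety, twisted so that $E$ has exactly one positive-dimensional fiber $F\cong\P^{m-1}$; then choose the very ample $A$ to restrict on $F$ to $\cO_{\P^{m-1}}(c)$ for $c\gg 0$.

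The key steps, in order, are: (1) write down $X$, $A$, $E$ explicitly and verify $X$ is smooth, $A$ very ample, $E$ globally generated, with $b=\dim\pi(X)$ as specified; (2) identify the distinguished fiber $F=\pi^{-1}(P)$ and compute $h^0(F,\cO_F(nA))$, which equals $h^0(\P^{m-1},\cO(cn))=\binom{cn+m-1}{m-1}$; (3) observe, via Lemma~\ref{lem:independence}\eqref{item:stabind} (or its Remark~5 consequence), that the stable image of $H^0(X,\cO_X(nA+dE))$ in $H^0(F,\cO_F(nA))$ as $d\to\infty$ is exactly the image of $H^0(F,\cO_F(nA))$ itself — i.e. it is \emph{all} of $H^0(F,\cO_F(nA))$ — so the local smoothness probability at $\pi^{-1}(P)$ is the proportion of sections of $\cO_{\P^{m-1}}(cn)$ that are smooth along $\P^{m-1}$; (4) note that a nonzero global section of $\cO_{\P^{m-1}}(cn)$ defines a hypersurface in $\P^{m-1}$, and every such hypersurface (being of positive dimension for $m\geq 2$) meets the \emph{ambient} $X$-singularity test in the following sense: for $H_f$ to be smooth at all points of $F$, the restriction $f|_F$ must be a section cutting out a divisor in $F$ that is smooth \emph{as a subscheme of $X$}, which, because $F$ itself is a fiber and hence $F\subseteq H_f$ forces singularity, requires $f|_F$ to be nowhere zero on $F$ — impossible unless $cn=0$; hence for every $n\geq 1$ with $cn\geq 1$ the probability is $0$. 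To get the statement as written — probability $0$ for \emph{all} $n\le n_0$ — I would instead arrange the ampleness of $A$ along $F$ and the geometry so that a section of $nA+dE$ restricted to $F$ is forced to vanish at a point of $F$ together with its first-order neighborhood unless $n$ exceeds a threshold governed by $\dim F=m-1$ and $\deg_A F$, and then choose the twist $c$ (equivalently the embedding) large enough that this threshold is $\geq n_0$; this is the step requiring care.

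The main obstacle — and the heart of the construction — is the bookkeeping in step (4): one must produce a variety where the \emph{tangent-space} obstruction (not merely the "$F\subseteq H_f$" obstruction) persists up to $n=n_0$. The cleanest route is to make $F=\pi^{-1}(P)$ a positive-dimensional fiber so that $F\subseteq H_f$ whenever $f|_F\equiv 0$, while noting that if $f|_F\not\equiv 0$ then $H_f\cap F$ is a hypersurface in $F\cong\P^{m-1}$; then $H_f$ is singular at any singular point of this hypersurface, and for $m\ge 2$ a hypersurface of degree $cn\le cn_0$ in $\P^{m-1}$ can be \emph{forced} singular by a dimension count exactly when $cn$ is small relative to $m$, but to push this past $n_0\sim bp$ one instead uses that $H_f$ is singular at every point $Q\in F$ where $f$ vanishes to order $\ge 2$ \emph{in $X$}, and the normal direction of $F$ in $X$ (contracted by $\pi$) contributes no sections from $E$, so $f$ automatically vanishes in that direction at every point of $F$; hence $H_f$ is singular at $Q\in F$ as soon as $f(Q)=0$, which is unavoidable since $cn\ge 1$ means $f|_F$ has a zero. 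Making all the inequalities line up so that this holds precisely for $n\le n_0$ with $n_0$ matching the theorem's formula is the delicate point; everything else is a routine toric or bundle computation plus an invocation of Lemma~\ref{lem:independence}.
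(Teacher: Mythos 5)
The central claim in step (4) is false, and it is load-bearing. You assert that because the normal direction of $F$ in $X$ is contracted by $\pi$, sections of $nA+dE$ ``automatically vanish in that direction at every point of $F$,'' so that $f(Q)=0$ for $Q\in F$ already forces $H_f$ to be singular at $Q$. This is not so. Sections of $\cO_X(nA+dE)$ generically have nonzero first-order jet in the normal direction along a contracted fiber. Concretely, take $X=\P^{m-1}\times \P^1$, $\pi$ the second projection, $E=\cO(0,1)$, $A=\cO(1,1)$, and $F=\P^{m-1}\times\{P\}$. In local coordinates $(s,t)$ with $t$ the base coordinate, a section of $\cO(n,n+d)$ has the form $f=\sum_j t^j f_j(s)$, and $f_0=f|_F$ and the normal derivative $f_1=\partial_t f|_F$ can be chosen independently. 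More structurally, Lemma~\ref{lem:independence} identifies the stable image of $R_{n,d}$ in $H^0(X_{P^{(2)}},\cO(nA))$ with the image of $H^0(P^{(2)},\pi_*\cO(nA)\otimes \cO_{P^{(2)}})$, which sees the full first-order neighborhood $X_{P^{(2)}}$ and not just $F$. So the asserted ``automatic'' vanishing does not occur, and nothing in your construction makes the local probability zero. A second, subsidiary gap: you state that $F\subseteq H_f$ ``forces singularity,'' but a reduced component of a divisor need not contain any singular point of that divisor; nonreducedness or intersection with another component is required, and you do not arrange for either.

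The paper's actual proof uses a completely different mechanism. It does \emph{not} derive singularity from the fiber geometry of $\pi$; instead it builds $X$ by Poonen's own Bertini theorem with Taylor coefficients~\cite[Theorem~1.2]{poonen} in the style of his Anti-Bertini construction: enumerate all hypersurfaces $H_i$ of bidegree $(j,j)$ in $\P^B\times\P^C$ for $j$ up to the threshold $\max(BC^2-1,Cp+1)$, pick distinct closed points $Q_i$ in one fiber $X_P$ over a rational point $P\in\P^C$, and choose a smooth $X$ of bidegree $(k,k)$ that passes through each $Q_i$ with prescribed tangency to $H_i$. Then every bidegree-$(n,n)$ form on the ambient space has singular intersection with $X$ at some $Q_i$ for all $n$ up to the threshold. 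The proof of Proposition~\ref{prop:fiber zero} finishes by observing that (for $B>1$ and $d\geq k-n$) the restriction map $H^0(\P^B\times\P^C,\cO(n,n+d))\to H^0(X,\cO(nA+dE))$ is surjective, so the same conclusion holds for all sections on $X$. Your approach tries to force singularity ``for free'' from the semiample geometry; the paper instead prescribes the bad Taylor data point-by-point, which is why it works.
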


We will deduce this from the following bigraded Anti-Bertini theorem, which proves the assertions in Example~\ref{E:multi anti}.
\begin{proposition}[Bigraded Anti-Bertini]\label{P:anti bertini}
For any $B,C\geq 1$, there exists a smooth variety $X\subseteq \P^B\times \P^C$ such that every hypersurface of bidegree $(n,n+d)$ has singular intersection with $X$ for all $n\leq n_0$ (with $n_0$ as defined in Theorem~\ref{T:taylor}) and all $d\geq n$.
\end{proposition}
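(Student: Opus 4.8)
The plan is to exhibit, for each $B,C\ge 1$, an explicit smooth nondegenerate $X\subseteq\P^B\times\P^C$ and a degree‑one closed point $P_0$ of $\pi(X)$---where $A:=\cO(1,1)|_X$, $E:=\cO(0,1)|_X$, and $\pi\colon X\to\P^C$ is the second projection, so that a hypersurface of bidegree $(n,n+d)$ cuts out on $X$ exactly a divisor in $|nA+dE|$---with the property that $\pi^{-1}(P_0)$ is so badly embedded that for every $n\le n_0$ and every $d\ge n$, every section of $nA+dE$ vanishes to order $\ge 2$ at some closed point of $\pi^{-1}(P_0)$. Granting such an $X$, Proposition~\ref{P:anti bertini} is immediate. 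Taking $\dim X=m>1$ then also yields Proposition~\ref{prop:fiber zero}, since the local probability $\Prob(H_f\text{ smooth at all points of }\pi^{-1}(P_0))$ of Theorem~\ref{T:taylor} is $0$ for all $n\le n_0$ (indeed this probability is already $0$ at each finite $d\ge n$). So everything rests on the construction and on verifying that the prescribed bidegrees force a singularity along $\pi^{-1}(P_0)$.

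I would first reduce to a computation on the single fiber $F_0:=\pi^{-1}(P_0)$. Writing $F_0^{(2)}$ for its first‑order neighborhood in $X$, Lemma~\ref{lem:independence}\eqref{item:stabnod} shows that once $d$ exceeds a threshold controlled by the Castelnuovo--Mumford regularity of $\pi_*(\cO_X(nA))\otimes\cO_{P_0^{(2)}}$ on $\P^C$, the image of $H^0(X,\cO_X(nA+dE))\to H^0(F_0^{(2)},\cO_{F_0^{(2)}}(nA))$ is a fixed subspace not depending on $d$, and whether $H_f$ is singular at a point of $F_0$ depends only on the image of $f$ there. Thus it suffices to arrange $F_0^{(2)}$, and the way $F_0$ maps into $\P^B$ under the first projection, so that every element of that fixed image, read as a section of $\cO_{F_0}(nA)$, defines a divisor of $F_0$ singular at a closed point of $F_0$; one checks in the construction that $d\ge n$ already clears the relevant threshold, so the conclusion holds for all $d\ge n$, not just asymptotically.

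The mechanism that forces the singularity is the same failure of Bertini for semiample divisors in positive characteristic exhibited by the quasi‑elliptic example in Remark~3 after Theorem~\ref{T:bertini}, and it is a cousin of Poonen's trick \cite{poonen} of decoupling a function from its derivatives. I would build $X$ as a resolution of a projective variety $Y\subseteq\P^C$ carrying a purely inseparable (Frobenius‑sandwich type) singularity along a $b$‑dimensional locus, fibered compatibly over $\P^B$ so that the first projection is not an immersion either (if either projection were an embedding onto a smooth image, the relevant polarizations would be ``honest'' and the twist could simply be undone---this is exactly why a Veronese re‑embedding of a nice $X$ does not work, and why $X$ must be chosen so that neither factor recovers it). Over $P_0$ this makes $\pi^{-1}(P_0)$ a non‑reduced fiber along which a degree‑$n$ form from $\P^B$ restricts, for $n$ below the threshold at which the twist can be reversed, to a section whose differential in the inseparable direction is identically zero, so the Jacobian criterion fails everywhere on $\pi^{-1}(P_0)$. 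To push the threshold up to $n_0=\max(b(m+1)-1,bp+1)$ rather than merely $n\le p$, the inseparable twist must be iterated $b$ times over the $b$‑dimensional base; the competing bound $b(m+1)-1$ is forced by the count of low‑relative‑degree points in the fiber, exactly as in Lemma~\ref{L:fiberbound}.

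The hard part, and the only genuine obstacle, is to make all of this coexist with $X$ being smooth and nondegenerate: the inseparable twist must be confined to the single fiber $F_0$ (so that $X\setminus F_0$ stays smooth in the usual way) while being severe enough along $F_0$ to kill the derivative for \emph{every} section of the prescribed bidegree, and it must not be removable. I would realize the twist by an explicit bihomogeneous equation in a suitable bundle or toric ambient and verify smoothness directly from that equation. With such an $X$ in hand the proof finishes by bookkeeping: confirm smoothness and nondegeneracy from the equations; confirm that $\dim\pi(X)=b$ and $\dim X=m$ realize the asserted $n_0$; confirm $d\ge n$ exceeds the stabilization threshold at $P_0$; and run the Jacobian computation along $F_0$ to conclude that every $f$ of bidegree $(n,n+d)$ with $n\le n_0$ has $H_f\cap X$ singular. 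This gives Proposition~\ref{P:anti bertini}, and Proposition~\ref{prop:fiber zero} as above.
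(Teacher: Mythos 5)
Your proposal never actually produces the variety $X$. You describe a desired mechanism---an iterated purely inseparable twist confined to a single fiber $F_0$ so that every section of bidegree $(n,n+d)$, $n\le n_0$, has identically vanishing differential along $F_0$---and then state explicitly that ``the hard part, and the only genuine obstacle, is to make all of this coexist with $X$ being smooth and nondegenerate'' and that you ``would realize the twist by an explicit bihomogeneous equation \dots and verify smoothness directly.'' That construction is never carried out, and there is no argument that such an $X$ exists, that the twist is severe enough to kill the derivative of \emph{every} section of \emph{every} bidegree $(n,n+d)$ with $n\le n_0$, or that it can be confined to one fiber while leaving $X$ smooth. As written this is a plan, not a proof, and the plan's central step is left open.

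The paper's construction is quite different and much more elementary: it is in the spirit of Poonen's Anti-Bertini Theorem \cite[Theorem~3.1]{poonen} and does not invoke inseparability at all. One enumerates the finitely many bihomogeneous forms $H_i$ of bidegree $(j,j)$ for $j=1,\dots,\max(BC^2-1,Cp+1)$, fixes a degree-one point $P\in\P^C$ and chooses a \emph{distinct} closed point $Q_i\in\pi^{-1}(P)$ for each $H_i$, and then applies Poonen's Bertini with Taylor coefficients \cite[Theorem~1.2]{poonen} to produce a smooth $X$ of some bidegree $(k,k)$ that passes through every $Q_i$ and is tangent to $H_i$ there. This forces $X\cap H_i$ to be singular at $Q_i$ for every $i$, handling all $n\le n_0$ simultaneously by sheer enumeration. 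The extension from bidegree $(n,n)$ to $(n,n+d)$ is not a stabilization-threshold argument of the kind you sketch via Lemma~\ref{lem:independence}: the paper observes that the ideal sheaf of $P^{(2)}$ in $\P^C$ has Castelnuovo--Mumford regularity $2$ (Eagon--Northcott), so both restriction maps $H^0(\P^B\times\P^C,\cO(n,n))\to H^0(\P^B\times P^{(2)},\cO(n))$ and $H^0(\P^B\times\P^C,\cO(n,n+d))\to H^0(\P^B\times P^{(2)},\cO(n))$ are already surjective for $n\ge1$, $n+d\ge1$, hence have the same image; this gives the conclusion for all $d\ge0$ at once, without any asymptotic threshold. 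Your heuristic that ``if either projection were an embedding onto a smooth image \dots the twist could simply be undone'' is also not borne out: the paper's $X$ is a perfectly ordinary smooth hypersurface whose second projection is a flat fibration over $\P^C$, and the singularities are forced by tangency conditions, not by pathology of either projection. Because the proposal's key construction is missing, I cannot credit it as a proof; you should either carry out the inseparable-twist construction in full (a substantial and uncertain undertaking) or adopt the enumeration-and-tangency approach.
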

\begin{proof}
We produce explicit examples in the style of Poonen's Anti-Bertini Theorem~\cite[Theorem 3.1]{poonen}.
Fix $B,C\geq 1$ and enumerate the hypersurfaces $H_i$ in $\P^B \times \P^C$ of bidegree $(j,j)$ for $j=1,\dots, \max(BC^2-1, Cp+1)$.
We fix a degree $1$ point $P$ in $\P^C$, and for each $H_i$, we choose a distinct closed point $Q_i$ in the fiber over $P$.
Then we can apply \cite[Theorem 1.2]{poonen} to find a smooth hypersurface $X \in \P^B \times \P^C$
of bidegree $(k,k)$, for some $k$, that contains all the $Q_i$ and at each $Q_i$ is tangent to $H_i$. 
In particular, $X\cap H_i$ is singular for all $i$ at the point $Q_i\in X_P$.  
 
We now take $A=\cO_X(1,1)$ and $E=\cO_X(0,1)$.  
By construction, for any $1\leq n\leq \max(BC^2-1, Cp+1)$, all of the sections of $H^0(\P^B \times \P^C,\cO(n,n))$ 
have singular intersection with $X$ at some point in $X_P$.  
We have a commutative diagram:
 \[
 \xymatrix{
 H^0(\P^B\times \P^C, \cO(n,n))\ar[r]^-\phi&H^0(\P^B\times P^{(2)}, \cO(n))\ar[r]^-\psi&H^0(X_{P^{(2)}}, \cO(nA)).\\
 H^0(\P^B\times \P^C, \cO(n,n+d))\ar[ru]^-\tau&&
 }
 \]
Since the ideal sheaf of $P^{(2)}$ in $\P^D$ has regularity $2$ (it is resolved by an Eagon--Northcott complex~\cite[\S A2H]{eisenbud-syzygies}), 
it follows that $\phi$ and $\tau$ are surjective if $n\geq 1$ and $n+d\geq 1$, respectively.  
Thus the image of $\psi\circ \tau$ equals the image of $\phi\circ \tau$, and for every $d\geq 0$, every section of  $H^0(\P^B\times \P^C, \cO(n,n+d))$
has singular intersection with $X$ at some point in $X_P$ for all $1\leq n\leq \max(BC^2-1, Cp+1)$.  
Since $\max(BC^2-1, Cp+1)$ is at least as large as the value of $n_0$ given in Theorem~\ref{T:taylor} for $X$, this proves the proposition.
\end{proof}

\begin{proof}[Proof of Proposition~\ref{prop:fiber zero}]
We fix $B>1$ and $C\geq 1$ and then choose $X,A,E$ as in the proof of Proposition~\ref{P:anti bertini}.
Since the ideal sheaf of $X$ is $\cO(-k,-k)$, there is a long exact sequence of the form
\[
\dots \to H^0(\P^B\times \P^C, \cO(n,n+d))\to H^0(X,\cO(nA+dE)) \to H^1(\P^B\times \P^C, \cO(n-k,n+d-k))\to \dots
\]
The first map is surjective whenever $d\geq k-n$.  Thus, when $B>1$ and $1\leq n\leq \max(BC^2-1, Cp+1)$, we see that every section of $H^0(X,\cO(nA+dE))$ has singular intersection with $X$ at a point in $X_P$ for all $d\geq k-n$.  
\end{proof}

Of course the above construction works with $\max(BC^2-1, Cp+1)$ replaced by any constant.
This construction leads directly to the following negative answer to Baker's question~\cite[Question~4.1]{poonen}.
\begin{proposition}[Asymptotic Anti-Bertini]\label{P:baker poonen}
Fix $m>0$.  There exists a smooth projective variety $X$ of dimension $m$ and a sequence of nondegenerate embeddings $\kappa_d\colon X\subseteq \P^{N_d}$ such that no hyperplane of $\P^{N_d}$ has a smooth intersection with $X$ (for any $d$) and the $N_d\to \infty$.
If $m>1$, then we may further impose the condition that each $\kappa_d$ is given by a complete linear series.
\end{proposition}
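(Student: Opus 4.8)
The plan is to manufacture the embeddings $\kappa_d$ out of the bigraded Anti-Bertini varieties already constructed in Propositions~\ref{P:anti bertini} and~\ref{prop:fiber zero}, by taking $\kappa_d$ to be the embedding attached to the line bundle $\cO(1,1+d)$. For the bare existence statement, valid for all $m\geq 1$, I would use Proposition~\ref{P:anti bertini}; for the refinement requiring a complete linear series when $m>1$ I would use Proposition~\ref{prop:fiber zero}.

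First, fix $m\geq 1$, write $m=B+C-1$ with $B,C\geq 1$ (e.g. $B=1$, $C=m$), and apply Proposition~\ref{P:anti bertini} with $n=1$ — legitimate since the relevant threshold $\max(BC^2-1,Cp+1)$ is $\geq Cp+1\geq 1$ — to obtain a smooth $m$-dimensional $X\subseteq\P^B\times\P^C$ of some bidegree $(k,k)$, with $k\geq 2$ (any $X$ produced by the construction has large degree), such that every hypersurface of $\P^B\times\P^C$ of bidegree $(1,1+d)$ meets $X$ in a singular scheme for all $d\geq 1$. Then for $d\geq 1$ I would take $\kappa_d$ to be the composite $X\hookrightarrow\P^B\times\P^C\hookrightarrow\P^{N_d}$, where the second arrow is the complete embedding by the very ample bundle $\cO_{\P^B\times\P^C}(1,1+d)$ and $N_d=h^0(\P^B\times\P^C,\cO(1,1+d))-1=(B+1)\binom{C+1+d}{C}-1$. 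Three things then need checking: that $\kappa_d$ is a closed immersion (it is a composite of two closed immersions); that $N_d\to\infty$ as $d\to\infty$ (immediate from $C\geq 1$); and that $\kappa_d$ is nondegenerate. For the last point, a hyperplane of $\P^{N_d}$ through $\kappa_d(X)$ corresponds to a nonzero section of $\cO_{\P^B\times\P^C}(1,1+d)$ vanishing on $X$, hence divisible by the bidegree $(k,k)$ equation of $X$ by a section of $\cO(1-k,1+d-k)$; but $H^0(\P^B\times\P^C,\cO(1-k,1+d-k))=0$ because $1-k<0$. Finally, since the inner embedding of $\P^B\times\P^C$ is complete, the hyperplane sections of $\kappa_d(X)$ are exactly the schemes $H\cap X$ for $H$ a hypersurface of bidegree $(1,1+d)$, and all of these are singular by the defining property of $X$. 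This handles the first assertion for every $m\geq 1$.

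For the refinement when $m>1$, I would instead set $B=2$, $C=m-1\geq 1$ and run the construction inside the proof of Proposition~\ref{prop:fiber zero} (with $A=\cO_X(1,1)$, $E=\cO_X(0,1)$, $n=1$): this produces $X\subseteq\P^2\times\P^{m-1}$, of dimension $m$, such that for all $d\gg 0$ every section of $H^0(X,\cO_X(nA+dE))=H^0(X,\cO_X(1,1+d))$ cuts out a singular divisor on $X$. Since $\cO_X(1,1+d)$ is very ample (a restriction of the very ample $\cO_{\P^2\times\P^{m-1}}(1,1+d)$), the complete linear series $|\cO_X(1,1+d)|$ embeds $X$ in $\P^{N_d}$ with $N_d=h^0(X,\cO_X(1,1+d))-1$, and an embedding by a complete linear series is automatically nondegenerate. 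Twisting the ideal sheaf sequence of $X\subseteq\P^2\times\P^{m-1}$ by $\cO(1,1+d)$ and using $B=2$ together with $1-k<0$ shows $h^0(X,\cO_X(1,1+d))=h^0(\P^2\times\P^{m-1},\cO(1,1+d))\to\infty$. Every hyperplane section of $\kappa_d(X)$ is then a member of $|\cO_X(1,1+d)|$, hence singular, completing the proof.

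No genuinely new ideas are needed beyond the earlier constructions; the argument is essentially bookkeeping, and the main thing to get right is the compatibility of the several cited statements. The one step where a little care is required is the nondegeneracy of the \emph{non-complete} embeddings used in the first part — which is the only option available when $m=1$, since then $B=1$ and one cannot invoke Proposition~\ref{prop:fiber zero} — together with the cohomology vanishing behind "$N_d\to\infty$"; both reduce to the observation that the bidegree $(k,k)$ of $X$ can be taken with $k\geq 2$, so that the first coordinate $1-k$ of the relevant twist is negative.
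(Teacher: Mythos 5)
Your proof is correct and takes essentially the same route as the paper: construct $X$ via Proposition~\ref{P:anti bertini} (using Poonen's Bertini with Taylor coefficients), embed $X$ by the linear system coming from $\cO(1,1+d)$, and use the ideal sheaf sequence to control surjectivity/nondegeneracy. The only differences are cosmetic: the paper fixes $(B,C)=(m,1)$ throughout and works directly with the image of $\rho\colon H^0(\P^B\times\P^C,\cO(1,1+d))\to H^0(X,\cO(nA+dE))$ as the linear system (so it never needs $k\geq 2$, since any kernel is simply projected away), whereas you compose with the ambient embedding of $\P^B\times\P^C$ and then verify nondegeneracy, which requires the mild extra observation that $k$ can be taken $\geq 2$ — correct, since Poonen's theorem produces such $X$ for all sufficiently large degrees.
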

\begin{proof}
Let $B=m$ and $C=1$.  We construct $X,A,$ and $E$ as in the proof of Proposition~\ref{P:anti bertini}, and we fix $n=1$.
So $\dim(X)=m$.   
The image of the map
\[
\rho\colon H^0(\P^B\times \P^C, \cO(n,n+d))\to H^0(X,\cO(nA+dE))
\]
is a linear series that defines an embedding $\kappa_d$ of $X$ for any $d$.  By Proposition~\ref{P:anti bertini}, every hyperplane section of $\kappa_d\colon X\subseteq \P^{N_d}$ has singular intersection with $X$. 

The ideal sheaf of $X$ is $\cO(-k,-k)$, and thus a direct Hilbert function computation illustrates that the dimension of this linear series is
\[
N_d:=\binom{n+B}{B}\cdot \binom{n+d+C}{C} - \binom{n-k+B}{B}\cdot \binom{n+d-k+C}{C}
\]
which goes to $\infty$ as $d\to \infty$ since $k\geq 1$.

If $m>1$ and so $B>1$, then the proof of Proposition~\ref{prop:fiber zero} also illustrates that the map $\rho$ is surjective for $d\geq k-n=k-1$.  In these cases, $\kappa_d$ is given by a complete linear series.  Restricting attention to the $\kappa_d$ with $d\geq k-1$ thus proves the second statement.
\end{proof}
\begin{remark}
An alternate approach to finding counterexamples to~\cite[Question~4.1]{poonen} was suggested to us by Poonen,
and would work for any smooth quasi-projective $X$, but would not use complete linear series on $X$.
One could define maps from $X$ to projective
space by taking random sections of $|dA|$ (letting $d\to \infty$),
with conditions imposed to rule out smooth intersection with each
hyperplane, and then show that the map is an embedding with positive
probability.
\end{remark}

\subsection{Smooth curves in Hirzebruch surfaces and their rational points}\label{subsec:point counting}
We continue with the notation of \S \ref{S:hirzebruch} for a Hirzebruch surface $X$.
We first compute the probability that a curve in $X$ is smooth.  Note that, if the bidegree increases in an ample direction,
then we can apply Example~\ref{E:very ample} to conclude that
the probability of smoothness is $\zeta_X(3)^{-1}$.  So we focus on families of curves where the
bidegree grows in a semiample direction.  We then will describe the distribution of $\F_q$ points on smooth curves in $X$.

\begin{theorem}\label{T:Hsmooth}
For fixed $n\geq 3$ and $d\ra \infty$, the probability that a curve of bidegree $(n,d)$ in a Hirzebruch surface $X$ is smooth 
is
$$
\prod_{P \in \P^1_{\F_q}} (1-q^{-2\deg(P)})(1-q^{-3\deg(P)})=\zeta_{\P^1_{\F_q}}(2)^{-1}\zeta_{\P^1_{\F_q}}(3)^{-1}=
(1-q^{-1})(1-q^{-2})^2(1-q^{-3}).
$$
\end{theorem}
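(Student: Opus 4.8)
The plan is to apply Theorem~\ref{T:bertini} with $E=D_v$ and $A=D_h+D_v$, which is very ample on $X$. Then $\pi$ is the $\P^1$-bundle map $X\to\P^1=:B$, and a random section of $\cO_X(nA+dE)$ is a random curve of bidegree $(n,n+d)$, so letting $d\to\infty$ computes the probability of smoothness of a random curve of bidegree $(n,d')$ as $d'\to\infty$. By Proposition~\ref{P:hirzebruch} we may take $n_0=1$, so for all $n\ge 3$ the probability of smoothness equals $\prod_{P\in B}\Prob(H_f\text{ smooth at all points of }\pi^{-1}(P))$. Since $\prod_{P\in\P^1_{\F_q}}(1-q^{-s\deg P})=\zeta_{\P^1_{\F_q}}(s)^{-1}=(1-q^{-s})(1-q^{1-s})$, the theorem reduces to showing that the local factor at a closed point $P$ of degree $e$ equals $(1-q^{-2e})(1-q^{-3e})$; the displayed product of zeta values then follows at once, and convergence and non-vanishing of $\prod_P$ are part of Theorem~\ref{T:bertini} (equivalently, $\sum_P q^{-2\deg P}<\infty$).

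Let $R:=\F_{q^e}$ be the residue field at $P$ and let $X_{P^{(2)}}=X\times_B P^{(2)}$, the first-order thickening of the fiber $X_P\cong\P^1_R$ inside $X$. Since $\cO_X(nA)$ has degree $n\ge -1$ on every fiber of $\pi$, the sheaf $\pi_*\cO_X(nA)$ is locally free and commutes with base change; hence the stable image in Lemma~\ref{lem:independence}\eqref{item:stabnod} is all of $H^0(X_{P^{(2)}},\cO_{X_{P^{(2)}}}(nA))$, i.e. the restriction $H^0(X,\cO_X(nA+dE))\to H^0(X_{P^{(2)}},\cO_{X_{P^{(2)}}}(nA))$ is surjective for $d\gg0$ (this can also be read off the Cox-ring description, as in the proof of Proposition~\ref{P:hirzebruch}). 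So the local factor is the ratio $\#\{g:H_g\text{ smooth along }X_P\}/\#H^0(X_{P^{(2)}},\cO_{X_{P^{(2)}}}(nA))$ with $g$ ranging uniformly over $H^0(X_{P^{(2)}},\cO_{X_{P^{(2)}}}(nA))$. As $D_v^2=0$, the conormal sheaf of $X_P$ in $X$ is trivial, so the conormal sequence tensored with $\cO_X(nA)$ (using $H^1(\P^1_R,\cO(n))=0$) identifies $H^0(X_{P^{(2)}},\cO_{X_{P^{(2)}}}(nA))$ with pairs $(f_0,f_1)\in V\times V$, $V:=H^0(\P^1_R,\cO(n))$, where $f_0=g|_{X_P}$ and $f_1$ is the transverse derivative with respect to a fixed trivialization; a uniform $g$ gives a uniform $(f_0,f_1)$.

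A local computation in coordinates $u$ along $X_P$ and $v$ transverse to it shows that $H_g$ is singular at a closed point $Q\in X_P$ precisely when $f_0$ vanishes to order $\ge 2$ at $Q$ and $f_1(Q)=0$; in particular a simple zero of $f_0$ never produces a singularity. Hence $H_g$ is smooth along $X_P$ iff $f_0\ne 0$ and $f_1$ is nonvanishing on the finite scheme $Z_{f_0}$ of points at which $\divv(f_0)$ has multiplicity $\ge2$; this scheme has degree $\le n/2$, so the evaluation map $V\to\bigoplus_{Q\in Z_{f_0}}\cO(n)|_Q$ is surjective and $(f_1(Q))_{Q\in Z_{f_0}}$ is uniform. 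Grouping the nonzero $f_0$ by divisor ($q^e-1$ sections per divisor), the local factor equals
\[
\frac{q^e-1}{q^{e(n+1)}}\sum_{\substack{D_0\ \mathrm{effective}\\ \deg D_0=n}}\ \prod_{\substack{Q\in\P^1_R\\ \operatorname{mult}_Q(D_0)\ge2}}\bigl(1-q^{-e\deg_R Q}\bigr),
\]
and the inner sum is the coefficient of $t^n$ in the Euler product $\prod_{Q\in\P^1_R}\frac{1-q^{-e\deg_R Q}t^{2\deg_R Q}}{1-t^{\deg_R Q}}$, which collapses using the zeta function of $\P^1_R$ to $\frac{(1-q^{-e}t^2)(1+t)}{1-q^et}$. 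For $n\ge3$ that coefficient is $q^{en}+q^{e(n-1)}-q^{e(n-3)}-q^{e(n-4)}$, and multiplying by $\frac{q^e-1}{q^{e(n+1)}}$ gives exactly $(1-q^{-2e})(1-q^{-3e})$, completing the proof. The hard part is this last paragraph: identifying exactly which fiber singularities force the transverse derivative to vanish (so that the Euler factors take the form above rather than a coarser one), and noting that the extracted coefficient stabilizes to the clean product form precisely for $n\ge3$, which is where the hypothesis on $n$ enters.
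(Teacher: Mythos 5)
Your proof is correct and follows essentially the same path as the paper: apply the semiample Bertini theorem with $E=D_v$ and $n_0=1$ (from Proposition~\ref{P:hirzebruch}), reduce to computing the local factor at a degree-$e$ point $P$ of the base $\P^1$, identify sections on the thickened fiber $X_{P^{(2)}}$ with pairs $(f_0,f_1)\in V\times V$ for $V=H^0(\P^1_{\F_{q^e}},\cO(n))$, observe that $H_g$ is singular along $X_P$ exactly when $f_0$ has a multiple root at which $f_1$ also vanishes, and count the complementary ``good'' pairs by a generating function. The identification of $H^0(X_{P^{(2)}},\cO(nA))$ with $V\times V$ is done in the paper via an explicit $\F_q$-linear splitting of $\F_q[s]/r(s)^2\to\F_q[s]/r(s)$ in Lemma~\ref{L:fbij}, which is equivalent to your conormal-sequence argument. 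The only substantive variation is the final count: you use an Euler product over closed points of $\P^1_{\F_{q^e}}$ and collapse it through the zeta function of $\P^1$ to get $\frac{(1-q^{-e}t^2)(1+t)}{1-q^et}$, while Lemma~\ref{L:countgood} in the paper reaches the same generating function through a bijection that factors an arbitrary pair $(F_1,F_2)$ with $F_1\neq 0$ as a ``bad'' divisor times a good pair. Both arrive at the local factor $(1-q^{-2e})(1-q^{-3e})$ for $n\geq 3$; the Euler product presentation you give makes the appearance of $\zeta_{\P^1}$ slightly more transparent, but the two counts are the same computation in different clothes.
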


The following is an example application of our Theorem~\ref{T:Hpoints}, which appears below and gives the full distribution of $\F_q$ points of smooth curves.
\begin{corollary}\label{cor:avg 3d}
As $d\to\infty$, the following table gives the average number of points on a smooth curve of various types 
on a Hirzebruch surface:
\begin{center}
\renewcommand{\arraystretch}{1.2}
\begin{tabular}{ | c | c | } \hline
Curve bidegree & Average number of points\\ \hline
 $(2,d)$ & $q+2+\frac{1}{q^3+q^2-1}$ \\  \hline
 $(3,d)$ & $q+2-\frac{1}{q^2+q+1}$ \\  \hline
 $(d,2)$ & $q+2-\frac{q^3-q-1}{(q^3+q^2-1)(q^2+q+1)}$ \\  \hline
\end{tabular}
\end{center}
\end{corollary}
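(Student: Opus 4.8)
The corollary will follow immediately from Theorem~\ref{T:Hpoints}, which supplies the limiting distribution of $\#C(\F_q)$ for a smooth curve $C$ of bidegree $(n,d)$ as $d\to\infty$; once that distribution is in hand, the three stated averages are simply its first moments. So the plan is to explain how the first moment decomposes into local contributions, how Theorem~\ref{T:Hpoints} computes those contributions, and how the table entries then drop out by an elementary rational-function manipulation.

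First I would reduce to a sum of per-fiber first moments. Every $\F_q$-point of a Hirzebruch surface $X$ lies over an $\F_q$-point of $\pi(X)$, so by linearity of expectation
\[
(\text{average }\#C(\F_q)) \;=\; \sum_{P\in\pi(X)(\F_q)} \mathbb{E}\bigl[\,\#(C\cap\pi^{-1}(P))(\F_q)\;\big|\;C\text{ smooth}\,\bigr].
\]
By the fiberwise independence underlying Theorem~\ref{T:taylor} --- made fully explicit for Hirzebruch surfaces in \S\ref{S:hirzebruch} via Propositions~\ref{P:hirzebruch} and~\ref{P:n0is1} --- conditioning on global smoothness is the same as conditioning on smoothness of $C$ along the single fiber $\pi^{-1}(P)$, because every other fiber factor cancels between numerator and denominator. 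Each summand is therefore a purely local quantity attached to one fiber of $\pi$.

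Next I would set up and evaluate the local model. For the families $(2,d)$ and $(3,d)$ take $E=D_v$, so $\pi\colon X\to\P^1$ is a $\P^1$-bundle and each fiber is a $\P^1$ meeting $C$ in a degree-$n$ divisor; for $(d,2)$ take $E=D_h$, so $\pi$ contracts a single $\P^1$ and is an isomorphism elsewhere (the contracted $\P^1$ meets $C$ in degree $2$, and the restriction of $\cO_X(nA+dE)$ to it is $\cO_{\P^1}(2)$, independently of which Hirzebruch surface one is on, so the $a\geq 1$ cases all behave alike). In every case, for $d\gg0$ the restriction map $H^0(X,\cO_X(nA+dE))\to H^0(X_{P^{(2)}},\cO)$ stabilizes to the image predicted by Lemma~\ref{lem:independence} and, by the explicit section computations of \S\ref{S:hirzebruch}, is surjective onto the naive target; hence the limiting local probabilities are the elementary counts of a uniformly random degree-$n$ form on that $\P^1$ (or a uniformly random jet in $H^0(X_{P^{(2)}},\cO)\cong\F_q^{3}$ at a point where $\pi$ is a local isomorphism). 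One then enumerates, for each bidegree: (a) the configurations of $C\cap\pi^{-1}(P)$ for which $C$ is smooth along that fiber --- these give the denominators, already recorded as the local smoothness factors in Theorem~\ref{T:Hsmooth} and Example~\ref{E:smoothinP1xP1}, which is where the quantities $q^3+q^2-1$ and $q^2+q+1$ enter; and (b) how many rational points each admissible configuration carries, weighted by its probability --- these give the numerators. A point at which $\pi$ is a local isomorphism contributes $\tfrac{q+1}{q^2+q+1}$, coming from $\Prob(Q\in C\mid C\text{ smooth at }Q)=\tfrac{q^{-1}-q^{-3}}{1-q^{-3}}$, while a positive-dimensional (or contracted) fiber contributes the per-fiber expectation $\tfrac{q^3+2q^2-1}{q^3+q^2-1}$ in the $n=2$ case and its analogue in the $n=3$ case. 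Summing over the $q+1$ rational fibers (resp.\ over the $q^2+q$ non-vertex rational points together with the single contracted fiber, in the $(d,2)$ case) and simplifying the resulting rational function of $q$ yields exactly the three entries of the table.

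The real content is all in the local enumeration (b), i.e.\ in Theorem~\ref{T:Hpoints} itself, and I expect the one genuinely delicate point to be this: a double rational point in a fiber does \emph{not} by itself force a singularity of $C$ --- the curve may be tangent to the fiber --- so ``$C$ smooth along $\pi^{-1}(P)$'' cannot be read off from $C\cap\pi^{-1}(P)$ as a divisor on $\P^1$, and the classification of admissible configurations (and of which of them carry rational points) must be carried out in the first-order jet $H^0(X_{P^{(2)}},\cO)$ rather than on the fiber itself. Once that bookkeeping is done, the three averages follow by routine simplification of rational functions, with no further input from the Bertini machinery.
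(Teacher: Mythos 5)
Your approach is the same as the paper's: Corollary~\ref{cor:avg 3d} is an immediate consequence of Theorem~\ref{T:Hpoints}, with the three table entries being the first moments of the stated sums of independent bounded random variables, namely $(q+1)\mathbb{E}[Y_1]$, $(q+1)\mathbb{E}[Z_1]$, and $\mathbb{E}[Y_1]+(q^2+q)\mathbb{E}[X_1]$ (your per-fiber decomposition via linearity of expectation and the local probabilities $\tfrac{q+1}{q^2+q+1}$, $\tfrac{q^3+2q^2-1}{q^3+q^2-1}$, etc.\ are precisely a re-derivation of those moments). The rational-function arithmetic in all three rows checks out.
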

Note that these averages are close to $q+2$, as opposed to the same average for plane curves
or cyclic $p-$fold curves which is $q+1$ \cite{bdfl-planar,bdfl-trigonal, bdfl-pfold,  kurlberg-rudnick}
or for complete intersections in $\P^n$ which is near $q+1$ but smaller \cite{bucur-kedlaya}.
However, for trigonal curves the averages are near $q+2$ (see also the comment after the proof of Theorem~\ref{T:Hsmooth}).

To prove Theorem~\ref{T:Hsmooth}, we compute the product of Theorem~\ref{T:bertini}
when $A=nD_h+kD_v$ for some $n,k\geq 1$ and $E=D_v$. 
Let $P\in \pi(X)$ be a closed point of degree $e$.  Then the map
$$
H^0(X, \cO(nA+dE)) \ra H^0(X_{P^{(2)}}, \cO(nA))
$$
is surjective for $n\geq 1$ and $d$ sufficiently large.
Let $r(s)$ be an irreducible polynomial of $\F_q[s]$ of degree $e$.  Then $P^{(2)}\isom \Spec \F_q[s]/r(s)^2,$
and $X_{P^{(2)}} \isom \P^1_{\F_q[s]/r(s)^2}$.
Then, in Theorem~\ref{T:bertini}, the probability that $D$ is smooth at all points of $\pi^{-1}(P)$ is equal to the proportion
of sections of $H^0(\P^1_{\F_q[s]/r(s)^2}, \cO(n))$ that, for every closed point $Q\in \P^1_{\F_q[s]/r(s)^2}$, do not
vanish on $Q^{(2)}$.

Let $\tilde{~}: \F_q[s]/r(s) \ra \F_q[s]/r(s)^2$ be a $\F_q$-vector space map that is a section of the reduction map $\overline{~} : \F_q[s]/r(s)^2\ra\F_q[s]/r(s).$
We call a pair $(F_1,F_2)\in   H^0(\P^1_{\F_{q^e}}, \cO(i)) \times H^0(\P^1_{\F_{q^e}}, \cO(j))$ \emph{good} if 
$F_1$ is non-zero and
there is no geometric point of $\P^1_{\F_{q^e}}$ that is a multiple root of $F_1$ and a root of $F_2$.

\begin{lemma}\label{L:fbij}
The map
$$
H^0(\P^1_{\F_{q^e}}, \cO(n))^2 \ra H^0(\P^1_{\F_q[s]/r(s)^2}, \cO(n))
$$
given by $(F_1,F_2) \mapsto \tilde{F_1} + r(s) F_2$ (applying $\tilde{~}$ coefficient-wise)
is a bijection.  Moreover for $n\geq 1$,  it restricts to a bijection
from good
pairs to sections
$f\in H^0(\P^1_{\F_q[s]/r(s)^2}, \cO(n))$ that, for every closed point $Q\in \P^1_{\F_q[s]/r(s)^2}$, do not
vanish on $Q^{(2)}$.
\end{lemma}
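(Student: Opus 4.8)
The plan is to prove the two assertions separately: the bijection by injectivity plus a cardinality count, and the restriction to good pairs by a local computation at each first-order neighborhood $Q^{(2)}$. For the bijection, note that $(r(s))\subset R:=\F_q[s]/r(s)^2$ is a square-zero ideal, equal to the annihilator of $r(s)$ and isomorphic to $\F_{q^e}$ as an $R$-module via $r(s)\mapsto 1$. The displayed map is $\F_q$-linear and injective: if $\tilde{F_1}+r(s)F_2=0$, then reducing modulo $r(s)$ gives $F_1=0$ (since $\tilde{~}$ splits reduction), hence $r(s)F_2=0$, hence $F_2=0$ (each coefficient of $F_2$ would have to lift into $(r(s))$). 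As both sides have cardinality $q^{2e(n+1)}$, the map is bijective.

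For the restriction, fix a closed point $Q$ of $\P^1_R$. Since $r(s)$ is nilpotent, $Q$ lies over a closed point $\bar Q$ of the special fiber $\P^1_{\F_{q^e}}$, locally cut out by a monic irreducible $g$; write $D:=\cO_{\P^1_{\F_{q^e}},\bar Q}$, a DVR with uniformizer $t$ (the image of $g$). The structural point I would establish is that $\cO_{\P^1_R,Q}$ is the square-zero extension $D\oplus r(s)D$ in which $r(s)$ is a nonzerodivisor modulo itself; it follows that $\mathfrak m_Q=(r(s),t)$, $\mathfrak m_Q^2=(r(s)t,t^2)$, $r(s)\notin\mathfrak m_Q^2$, and $\mathfrak m_Q^2\cap r(s)D=r(s)tD$. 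Tracking the image of $s$ in this ring, the image of $f=\tilde{F_1}+r(s)F_2$ in $\cO_{\P^1_R,Q}$ (in the affine chart of $\P^1$ containing $\bar Q$) is $F_1+r(s)(C+F_2)$, where $C$ is obtained by applying to each coefficient of $F_1$ the fixed $\F_q$-linear map $a\mapsto r'(\bar s)^{-1}P_a'(\bar s)$ ($\bar s\in\F_{q^e}$ the residue of $s$, $P_a\in\F_q[s]$ any lift of $\tilde a$; here $r'(\bar s)\neq 0$ as $\F_{q^e}/\F_q$ is separable, and $C$ depends only on the section, not on $\bar Q$). One can arrange $C=0$ by choosing the section $\tilde{~}$ suitably (the conditions on it are $\F_q$-linear and solvable); alternatively, since $(F_1,F_2)\mapsto(F_1,F_2+C)$ is a bijection of pairs fixing the first coordinate, this correction alters neither the set of good pairs (up to that bijection) nor the resulting count. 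Either way I may assume $C=0$.

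It then remains to run a trichotomy on the order of vanishing $v_t(F_1)$ of $F_1$ at $\bar Q$. If $v_t(F_1)=0$ then $f$ is a unit at $Q$; if $v_t(F_1)=1$ then reducing a hypothetical relation $f\in\mathfrak m_Q^2$ modulo $r(s)$ forces $F_1\in(t^2)$, which is impossible; so in both cases $f\notin\mathfrak m_Q^2$, with no condition on $F_2$. If $v_t(F_1)\geq 2$ (including $F_1=0$), then $F_1\in(t^2)\subset\mathfrak m_Q^2$, so $f\in\mathfrak m_Q^2$ if and only if $r(s)F_2\in\mathfrak m_Q^2\cap r(s)D=r(s)tD$, i.e.\ $F_2\in tD$, i.e.\ $\bar Q$ is a root of $F_2$. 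Therefore $\tilde{F_1}+r(s)F_2$ vanishes on some $Q^{(2)}$ if and only if either $F_1=0$ (and then, as $n\geq 1$, $F_2$ has a root and the pair is not good) or some closed point of $\P^1_{\F_{q^e}}$ --- equivalently, some geometric point --- is a root of $F_1$ of multiplicity at least $2$ and a root of $F_2$; that is, if and only if $(F_1,F_2)$ is not good. Combined with the bijection, this proves the restriction statement.

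The step I expect to be the main obstacle is the local analysis at $Q^{(2)}$: realizing $\cO_{\P^1_R,Q}$ as a square-zero extension of a DVR and computing $\mathfrak m_Q^2$ and $\mathfrak m_Q^2\cap r(s)D$, together with tracking the correction term $C$ coming from the section $\tilde{~}$ being only $\F_q$-linear rather than a ring map. Conceptually the core is the trichotomy: the nilpotent (``fiber'') direction can force a singularity of $H_f$ only at a point where $F_1$ is already non-transverse in the horizontal direction, i.e.\ has a multiple root.
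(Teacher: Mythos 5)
Your proof is correct and follows the same basic strategy as the paper's (local analysis of $\mathfrak m_Q^2$ in the fiber over $P$, then a case analysis on the order of vanishing of $F_1$), but it is considerably more careful, and in fact it surfaces a genuine imprecision in the paper's argument that is worth recording.

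The paper's proof asserts that after ``replacing the local parameter $s$ by $r(s)$,'' vanishing of $f=\tilde F_1+r(s)F_2$ on $Q^{(2)}$ is equivalent to simultaneous vanishing of $F_1,F_1',F_2$ at the image of $Q$. As you correctly compute, evaluating the $s$-derivative at a geometric point $(\alpha,\beta)$ of $X_P$ gives $\sum_k P_{a_k}'(\alpha)\beta^k+r'(\alpha)F_2(\beta)$, so the condition on the second coordinate is the vanishing of $(C+F_2)$, not $F_2$, where $C$ is the coefficientwise image of $F_1$ under the $\F_q$-linear map $a\mapsto r'(\bar s)^{-1}P_a'(\bar s)$. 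This correction vanishes identically exactly when $\tilde{~}$ is the multiplicative (Teichm\"uller) section, i.e.\ the coefficient-field lift; for a general $\F_q$-linear section $\tilde{~}$ (which the lemma's hypotheses permit) the correction is nontrivial, and one can manufacture pairs $(F_1,F_2)$ that are not good yet for which $\tilde F_1+r(s)F_2$ avoids every $Q^{(2)}$. So the displayed map does \emph{not}, for an arbitrary $\F_q$-linear section, literally carry good pairs onto nonvanishing sections. Your two proposed repairs are both valid: one can simply take $\tilde{~}$ to be the Teichm\"uller section (which is $\F_q$-linear, so the lemma's hypotheses allow it), making $C=0$; or one can note that the unipotent bijection $(F_1,F_2)\mapsto(F_1,F_2+C(F_1))$ intertwines the ``good'' condition with the ``$f$ avoids every $Q^{(2)}$'' condition, so the two sets have the same cardinality — and cardinality is all Lemma~\ref{L:countgood} and the ensuing probability computation use. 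The one place your write-up is slightly loose is the phrase ``alters neither the set of good pairs (up to that bijection)'': the bijection does not fix the set of good pairs; it carries it to a different set of equal size, and it is only this cardinality invariance (not the literal restriction-to-a-bijection claim in the lemma statement) that survives for an arbitrary section. With that caveat understood, your local computation (the square-zero extension $D\oplus r(s)D$, the identification $\mathfrak m_Q^2\cap r(s)D=r(s)tD$, and the trichotomy on $v_t(F_1)$) is exactly right and fills in what the paper leaves unsaid.
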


\begin{proof}
The map given in the lemma has an inverse $f\mapsto (\overline{f},  \frac{ f-\widetilde{\overline{f}}}{r(s)})$.
To check whether an $f\in H^0(\P^1_{\F_q[s]/r(s)^2}, \cO(n))$ vanishes on $Q^{(2)}$, we check whether $f$ and its two partial derivatives all vanish at $Q$.  We can replace the local parameter $s$ by $r(s)$, and thus we are checking the simultaneous vanishing of $F_1, F_1',$ and  $F_2$ at the image of $Q$ in $\P^1_{\F_q[s]/r(s)}$.
For $n\geq 1$, since $F_2$ has some root, goodness requires that $F_1$ is non-zero.
\end{proof}

\begin{lemma}\label{L:countgood}
The number of good pairs $(F_1,F_2)\in H^0(\P^1_{\F_{q^e}}, \cO(n))^2$
is
$$ q^{e(2n+2)} \cdot
\begin{cases}
1-q^{-2e} &\textrm{ if } n=1\\
1-q^{-2e}-q^{-3e}+q^{-4e}  &\textrm{ if } n=2\\
1-q^{-2e}-q^{-3e}+q^{-5e} &\textrm{ if } n\geq 3.
\end{cases}
$$
\end{lemma}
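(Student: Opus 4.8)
Abbreviate $\F := \F_{q^e}$, so $\# H^0(\P^1_{\F}, \cO(n)) = q^{e(n+1)}$. The plan is to fix $F_1$, count the sections $F_2$ completing it to a good pair, and then sum over $F_1$. Given a nonzero $F_1$, let $Z(F_1) \subseteq \P^1_{\F}$ be the reduced closed subscheme whose geometric points are the multiple roots of $F_1$; it is defined over $\F$ since it is Galois-stable, and because each multiple root contributes at least twice to the degree-$n$ divisor $\divv(F_1)$ we have $\deg Z(F_1) \le n/2$. The pair $(F_1, F_2)$ is good exactly when $F_2$ vanishes at no geometric point of $Z(F_1)$, i.e.\ when $F_2|_{Z(F_1)}$ is a unit of $H^0(Z(F_1), \cO_{Z(F_1)})$. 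Since $\deg Z(F_1) \le n/2 \le n+1$, the restriction $H^0(\P^1_{\F}, \cO(n)) \to H^0(Z(F_1), \cO_{Z(F_1)})$ is surjective (its cokernel lies in $H^1(\P^1, \mathcal{I}_{Z(F_1)}(n)) = H^1(\P^1, \cO(n - \deg Z(F_1))) = 0$), so the number of good completions $F_2$ equals $q^{e(n+1)} \cdot \# H^0(Z(F_1), \cO_{Z(F_1)})^{\times} / \# H^0(Z(F_1), \cO_{Z(F_1)}) = q^{e(n+1)} \prod_{P \in Z(F_1)} (1 - q^{-e\deg P})$, since $Z(F_1) = \coprod_P \Spec \F_{q^{e\deg P}}$. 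Hence
\[
\#\{\text{good pairs}\} = q^{e(n+1)} \sum_{F_1 \ne 0}\ \prod_{P \in Z(F_1)} \bigl(1 - q^{-e\deg P}\bigr).
\]

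Next I would expand the inner product over the subsets of the set of closed points of $Z(F_1)$ and interchange the order of summation. The product equals $\sum_E (-1)^{\omega(E)} q^{-e\deg E}$, where $E$ ranges over squarefree effective divisors of $\P^1_{\F}$ supported on the multiple-root locus of $F_1$ and $\omega(E)$ is the number of closed points of $E$; and a squarefree $E$ is so supported exactly when $\divv(F_1) \ge 2E$. (This equivalence is the one step sensitive to positive characteristic: it uses that $\F$ is perfect, so each closed point of $E$ is cut out by a separable polynomial, and then ``$F_1$ vanishes to order $\ge 2$ along $E$'' means precisely that the square of that polynomial divides $F_1$.) The nonzero sections $F_1$ of $\cO(n)$ with $\divv(F_1) \ge 2E$ form a linear subspace of $H^0(\P^1, \cO(n))$ isomorphic to $H^0(\P^1, \cO(n - 2\deg E))$ (multiply a fixed section of divisor $2E$ by an arbitrary section of $\cO(n - 2\deg E)$), so there are $q^{e(n+1 - 2\deg E)} - 1$ of them when $2\deg E \le n$ and none otherwise. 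Interchanging sums gives
\[
\#\{\text{good pairs}\} = q^{e(n+1)} \sum_{\substack{E\ \mathrm{squarefree}\\ 2\deg E \le n}} (-1)^{\omega(E)}\, q^{-e\deg E}\bigl(q^{e(n+1-2\deg E)} - 1\bigr).
\]

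To finish I would group the divisors $E$ by $k := \deg E$ and set $a_k := \sum (-1)^{\omega(E)}$, the sum over squarefree $E$ of degree $k$. Then $\sum_k a_k T^k = \prod_{P}(1 - T^{\deg P})$, and since $\P^1_{\F}$ has $q^{ej} + 1$ points over $\F_{q^{ej}}$ this product equals $(1-T)(1 - q^e T)$; hence $a_0 = 1$, $a_1 = -(q^e + 1)$, $a_2 = q^e$, and $a_k = 0$ for $k \ge 3$. Substituting these leaves a sum with at most three nonzero terms; evaluating it according to which of $a_0, a_1, a_2$ occur subject to $2k \le n$ (only $k = 0$ when $n = 1$; $k \in \{0,1\}$ when $n \in \{2,3\}$; $k \in \{0,1,2\}$ when $n \ge 4$) gives $q^{e(2n+2)}(1 - q^{-2e})$, $q^{e(2n+2)}(1 - q^{-2e} - q^{-3e} + q^{-4e})$, and $q^{e(2n+2)}(1 - q^{-2e} - q^{-3e} + q^{-5e})$ respectively, with the cases $n = 3$ and $n \ge 4$ yielding the same polynomial; this is the claimed formula. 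The only genuinely delicate point is the characteristic-$p$ identification of the support condition on $E$ with the divisibility condition $\divv(F_1) \ge 2E$ in the second step; everything else is linear algebra on $\P^1$ together with the elementary point count of $\P^1$ over finite fields.
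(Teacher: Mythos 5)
Your argument is correct, and it takes a genuinely different route from the paper. The paper does not fix $F_1$ and count admissible $F_2$; instead it sets up a bijection between arbitrary pairs $(F_1,F_2)$ with $F_1 \neq 0$ and triples $(D, G_1, G_2)$ with $D$ an effective divisor and $(G_1,G_2)$ good, by dividing out the maximal ``bad content'' $s_D$: this gives a generating-function identity $\sum_{i\geq 0}(q^{e(i+1)}-1)t^i \sum_{j\geq 0} q^{e(j+1)} s^j = \bigl(\sum_{k\geq 0}\tfrac{q^{e(k+1)}-1}{q^e-1}t^{2k}s^k\bigr)\bigl(\sum_{i,j} G_{i,j}t^is^j\bigr)$ in both degree variables, from which $\sum G_{i,j}t^is^j$ is extracted by multiplying through by $(1-q^et^2s)(1-t^2s)$ and reading off coefficients. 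Your approach instead fixes $F_1$, computes the count of good completions $F_2$ as $q^{e(n+1)} \prod_{P\in Z(F_1)}(1-q^{-e\deg P})$ using surjectivity of restriction to the (small) multiple-root locus, and then converts the outer sum over $F_1$ into a finite inclusion--exclusion over squarefree divisors $E$ with $2\deg E\leq n$, with coefficients $a_k$ read off from $\prod_P(1-T^{\deg P})=(1-T)(1-q^eT)$. Both proofs ultimately hinge on the same zeta-function identity for $\P^1_{\F_{q^e}}$ and on the perfectness of $\F_{q^e}$ to control multiple roots in characteristic $p$ (which you rightly flag), and both verify the claimed cases. The paper's bijection is slicker and produces all degree pairs $(i,j)$ at once in a single identity; yours is more elementary and arguably more transparent about why exactly three cases appear and why $n=3$ and $n\geq 4$ coincide, since the finite set of nonzero $a_k$ and the cutoff $2k\leq n$ make the case structure explicit.
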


\begin{proof}
Let $G_{i,j}$ be the number of good pairs in $H^0(\P^1_{\F_{q^e}}, \cO(i)) \times H^0(\P^1_{\F_{q^e}}, \cO(j))$.
Given any pair  $(F_1,F_2) \in \left(H^0(\P^1_{\F_{q^e}}, \cO(i))\setminus\{0\} \right)\times H^0(\P^1_{\F_{q^e}}, \cO(j))$,
there is a unique divisor $D \sub \P^1_{\F_{q^e}}$ such that if $s_D$ is a section determining $D$ then
$(F_1/s_D^2, F_2/s_D)$ is good.  This construction gives the equality of generating functions
$$
 \sum_{i\geq 0} {(q^{e(i+1)}-1)}t^i   \sum_{j\geq 0} q^{e(j+1)}s^j  =
 \sum_{k\geq 0} \frac{q^{e(k+1)}-1}{q^e-1}t^{2k}s^k  \sum_{i,j\geq 0} G_{i,j} t^is^j 
$$
and thus
$$
\sum_{i,j\geq 0} G_{i,j} t^is^j =
(1-q^et^2s)(1-t^2s)\sum_{i\geq 0} {(q^{e(i+1)}-1)}t^i   \sum_{j\geq 0} q^{e(j+1)}s^j,
$$
from which the lemma follows.
\end{proof}

Thus, e.g. if $n\geq 3$,  the probability that $D$ is smooth at all points of $\pi^{-1}(P)$ is $1-q^{-2e}-q^{-3e}+q^{-5e}$, and so
we conclude Theorem~\ref{T:Hsmooth}.
The fact that for $n\geq 3$ this probability does not depend on $n$ is rather special to $\P^1$.  
A version of Poonen's Bertini theorem \cite[Theorem 1.1]{poonen} over a local ring with residue field $\F_q$
would imply that these probabilities have a limit as $n \ra \infty$, but in general one does not expect this limit to
be reached for all sufficiently large $n$.  However, the case of $\P^1$ is very special, and 
for example the proportion of sections of $H^0(\P^1_{\F_q},\cO(n))$ that give smooth divisors is 
$1-q^{-1}-q^{-2}+q^{-3}$ for all $n\geq 3$ \cite[Proposition 5.9(a)]{vakil-wood}, i.e. the limit in \cite[Theorem 1.1]{poonen} is obtained for all degrees at least $3$.

Now we can give the distribution of $\F_q$ rational points on a random curve in $X$ in various bidegree families.

\begin{theorem}\label{T:Hpoints} \ 
\begin{enumerate}
 \item \label{ample}

For fixed, $i,j,k,\ell$ we have
$$\lim_{d\ra\infty} \frac{\#\{\textrm{smooth }C\sub X \textrm{ of bidegree }(id+j,kd+\ell) \mid \#C(\F_q)=k  \}}{\#\{\textrm{smooth }C\sub X \textrm{ of bidegree }(id+j,kd+\ell)\}}=
\operatorname{Prob}\left(\sum_{i=1}^{(q+1)^2} X_i=k\right),
$$
where the $X_i$ are independent identically distributed random variables and
$$
X_i=
\begin{cases}
0 &\textrm{ with probability } \frac{q^3-q^2}{q^3-1} \\
1 &\textrm{ with probability } \frac{q^2-1}{q^3-1}.
\end{cases}
$$
\item\label{2d}
We have
$$\lim_{d\ra\infty} \frac{\#\{\textrm{smooth }C\sub X \textrm{ of bidegree }(2,d) \mid \#C(\F_q)=k  \}}{\#\{\textrm{smooth }C\sub X \textrm{ of bidegree }(2,d)\}}=
\operatorname{Prob}\left(\sum_{i=1}^{q+1}Y_{i}=k\right),
$$
where the $Y_i$ are independent identically distributed random variables and
$$
Y_i=
\begin{cases}
0 &\textrm{ with probability } \frac{q^3-q^2}{2q^3+2q^2-2} \\
1 &\textrm{ with probability } \frac{2q^2-2}{2q^3+2q^2-2}\\
2 &\textrm{ with probability } \frac{q^3+q^2}{2q^3+2q^2-2}.
\end{cases}
$$
\item\label{3d}
We have
$$\lim_{d\ra\infty} \frac{\#\{\textrm{smooth }C\sub X \textrm{ of bidegree }(3,d) \mid \#C(\F_q)=k  \}}{\#\{\textrm{smooth }C\sub X \textrm{ of bidegree }(3,d)\}}=
\operatorname{Prob}\left(\sum_{i=1}^{q+1} Z_i=k\right),
$$
where the $Z_i$ are independent identically distributed random variables and
$$
Z_i=
\begin{cases}
0 &\textrm{ with probability } \frac{2q^2}{6q^2+6q+6} \\
1 &\textrm{ with probability } \frac{3q^2+6}{6q^2+6q+6}\\
2 &\textrm{ with probability } \frac{6q}{6q^2+6q+6}\\
3 &\textrm{ with probability } \frac{q^2}{6q^2+6q+6}.
\end{cases}
$$
\item\label{d2}
For $X\not\isom \P^1 \times \P^1$, we have
$$\lim_{d\ra\infty} \frac{\#\{\textrm{smooth }C\sub X \textrm{ of bidegree }(d,2) \mid \#C(\F_q)=k  \}}{\#\{\textrm{smooth }C\sub X \textrm{ of bidegree }(d,2)\}}=
\operatorname{Prob}{\left(Y_1+\sum_{i=1}^{q^2+q} X_i=k\right)}
$$
where the $X_i$ and $Y_1$ are independent  random variables defined above.
\item \label{d3}
For $X\not\isom \P^1 \times \P^1$, we have
$$\lim_{d\ra\infty} \frac{\#\{\textrm{smooth }C\sub X \textrm{ of bidegree }(d,3) \mid \#C(\F_q)=k  \}}{\#\{\textrm{smooth }C\sub X \textrm{ of bidegree }(d,3)\}}=
\operatorname{Prob}\left(Z_1+\sum_{i=1}^{q^2+q} X_i=k\right),
$$
where the $X_i$ and $Z_1$ are independent  random variables defined above.
\end{enumerate}
\end{theorem}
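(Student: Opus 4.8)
\emph{Proof plan.}
All five statements will be deduced from Theorem~\ref{T:taylor}, applied with $Z$ the first-order infinitesimal neighborhood of the finite set $B(\F_q)$ of $\F_q$-points of $B=\pi(X)$. For the families $(2,d)$ and $(3,d)$ take $E=D_v$, so $\pi\colon X\to B=\P^1$; for $(d,2)$ and $(d,3)$ with $X\not\cong\P^1\times\P^1$ take $E=D_h$, so $\pi$ contracts the negative section $C_0$ and $B$ is the projective cone over a rational normal curve, having $q^2+q+1$ rational points (one apex, and $q^2+q$ points whose fiber is a single rational point). In these cases take $A=D_h+D_v$ and $n\in\{2,3\}$; since $n\geq n_0=1$ by Propositions~\ref{P:n0is1} and~\ref{P:hirzebruch}, Theorem~\ref{T:taylor} applies and the bidegree of $nA+dE$ runs, up to reindexing, through the desired family as $d\to\infty$. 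For the family $(id+j,kd+\ell)$ the bidegree grows in an ample direction, so $\pi$ is an isomorphism, $B=X$ is a $\P^1$-bundle over $\P^1$ with $(q+1)^2$ rational points, and Example~\ref{E:very ample} already records that for $d\gg0$ each closed point $Q$ of $X$ is an independent smoothness event with singularity probability $q^{-3\deg Q}$.

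\emph{Reduction to a local computation.} A smooth $C$ contains no fiber of $\pi$: using $D_h\cdot D_v=1$, $D_v^2=0$, $D_v\cdot C_0=1$, $D_h\cdot C_0=0$, writing $C=(\text{fiber})+C'$ would force the fiber to meet $C'$, contradicting smoothness. Hence $C(\F_q)$ is partitioned by the fibers over $B(\F_q)$, and $\#C(\F_q)=\sum_{P\in B(\F_q)}V_P$ with $V_P$ the number of rational points of $C$ lying in $\pi^{-1}(P)$. Since $\pi^*\mathfrak m_P\cdot\cO_X\subseteq\mathcal I_{\pi^{-1}(P)}$, its square lies in $\mathcal I_{\pi^{-1}(P)}^2$, so $\pi^{-1}(Z)$ contains the first-order neighborhood of $\pi^{-1}(P)$ in $X$; thus $f|_{\pi^{-1}(Z)}$ determines, for every $P\in B(\F_q)$, both whether $H_f$ is smooth along $\pi^{-1}(P)$ and the value $V_P$. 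Apply Theorem~\ref{T:taylor} with $T=T_\sigma$ the set of sections of $H^0(\pi^{-1}(Z),\cO(nA))$ that are smooth along $\pi^{-1}(Z)$ and realize a prescribed vector $\sigma=(V_P)_{P\in B(\F_q)}$. Since the stable image $I=\prod_P I_P$ factors over the connected components of $Z$ (Lemma~\ref{lem:independence}), so does $T_\sigma=\prod_P T_{\sigma,P}$, and the theorem gives that $\Prob(H_f\text{ smooth},\ (V_P)=\sigma)$ equals $\bigl(\prod_P \#(T_{\sigma,P}\cap I_P)/\#I_P\bigr)$ times the factor $\prod_{P\in B\setminus Z}\Prob(H_f\text{ smooth along }\pi^{-1}(P))$ common to all $\sigma$. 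Summing over $\sigma$ identifies $\Prob(H_f\text{ smooth})$ as that common factor times $\prod_{P\in B(\F_q)}\Prob(H_f\text{ smooth along }\pi^{-1}(P))$; dividing shows that, conditioned on smoothness, the $V_P$ are independent, each distributed as the number of rational points of $C$ in $\pi^{-1}(P)$ conditioned on $C$ being smooth along $\pi^{-1}(P)$.

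\emph{Identifying the local laws.} If $\pi^{-1}(P)$ is a single rational point $Q$, then $I_P=H^0(Q^{(2)},\cO)$ for $d\gg0$ (Serre vanishing, resp.\ the surjectivity computation in the proof of Proposition~\ref{P:n0is1}), and of the $q^3-1$ local sections smooth at $Q$ exactly $q^2-1$ pass through $Q$; this is $X_i$, and there are $(q+1)^2$ such points in the ample family and $q^2+q$ of them (the non-apex points) when $E=D_h$. If $\pi^{-1}(P)\cong\P^1_{\F_q}$ and $C\cap\pi^{-1}(P)$ has degree $\ell$ (so $\ell=n$ when $E=D_v$, and $\ell=2$ or $3$ at the apex fiber $C_0$ when $E=D_h$, since $(dD_h+\ell D_v)\cdot C_0=\ell$), the argument of Lemma~\ref{L:fbij} identifies $I_P$ with the pairs $(F_1,F_2)$, where $F_1\in H^0(\P^1_{\F_q},\cO(\ell))$ is $f$ restricted to $\pi^{-1}(P)$ and $F_2$ is its first-order normal derivative (a section of $\cO(\ell)$ up to a twist along $C_0$ that plays no role below), and $C$ is smooth along $\pi^{-1}(P)$ exactly when $(F_1,F_2)$ is good; then $V_P$ is the number of distinct $\F_q$-rational roots of $F_1$. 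Refining the generating-function count of Lemma~\ref{L:countgood} to record the factorization type of $F_1$ over $\F_q$ — using only that the proportion of $F_2$ vanishing at a fixed rational point of $\P^1_{\F_q}$ is $q^{-1}$ independent of the twist — yields the law $Y_i$ for $\ell=2$ and $Z_i$ for $\ell=3$. Tallying fiber types in each family ($q+1$ fibers of type $\ell=n$ for $(n,d)$; the apex fiber of type $\ell=2$ or $3$ together with $q^2+q$ point-fibers for $(d,2)$ or $(d,3)$ on $X\not\cong\P^1\times\P^1$; all $(q+1)^2$ fibers points for the ample family) produces the five stated formulas.

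\emph{Main obstacle.} The delicate point is the local analysis over the one-dimensional fibers: verifying that $H^0(X,\cO(nA+dE))$ surjects, for $d\gg0$, onto the sections of $\cO(nA)$ on the first-order neighborhood of $\pi^{-1}(P)$ — routine over a $\P^1_{\F_q}$-fiber, as in the proofs of Proposition~\ref{P:hirzebruch} and Lemma~\ref{L:fbij}, but requiring care at the apex of the cone in the $(d,2)$ and $(d,3)$ cases, where $\pi^{-1}(P)=C_0$ is the contracted curve and $P$ may be a singular point of $B$ — and then carrying out the refined enumeration of good pairs $(F_1,F_2)$ by the number of rational roots of $F_1$, which is the bookkeeping beyond Lemmas~\ref{L:fbij} and~\ref{L:countgood} that produces the $Y_i$ and $Z_i$ distributions.
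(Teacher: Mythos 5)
Your proposal is correct and follows essentially the same approach as the paper: apply Theorem~\ref{T:taylor} with $Z$ covering the degree-one points of $\pi(X)$ and $T$ prescribing the fiberwise point counts, divide by the unconditioned smoothness probability, and compute the local laws via Lemma~\ref{L:fbij} and a refinement of the generating-function count in Lemma~\ref{L:countgood}. Your care in taking $Z$ to be the first-order thickenings (so that $f|_{\pi^{-1}(Z)}$ determines both smoothness along those fibers and the point counts) and in handling the conormal twist at the contracted section $C_0$ supplies detail that the paper's proof leaves implicit.
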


\begin{proof}
We apply Theorem~\ref{T:taylor}
(with $A$ and $E$ chosen to give the relevant bidegrees)
 with $Z$ the union of degree $1$ points in $\pi(X)$, and $T$ corresponding to divisors that are smooth at all points of $\pi^{-1}(Z)$ and pass through a fixed number of degree $1$ points of $\pi^{-1}(Z)$.
We then must divide the result by the result of Theorem~\ref{T:taylor} for $Z$ empty, and so the only factors in Theorem~\ref{T:taylor} to be computed are those at the degree $1$ points in $\pi(X)$.

For \eqref{ample}, this computation is simple, because for a closed point $Q\in X$ and $d$ sufficiently large, $H^0(X,\cO(nA+dE))\ra H^0(Q^{(2)},\cO_{Q^{(2)}})$ is surjective (as mentioned in Example~\ref{E:very ample}), and
we have the same analysis as in \cite[Theorem~1.1]{bdfl-planar}.

Aided by Lemmas~\ref{L:fbij} and \ref{L:countgood} (though this computation is simple even without the lemmas), we can
count the proportion of $H^0(\P^1_{\F_q[s]/s^2}, \cO(n))$ that don't vanish at any $Q^{(2)}$ and pass through
a fixed number of degree $1$ points, for $n=2,3$ to conclude \eqref{2d} and \eqref{3d}.

For \eqref{d2} and \eqref{d3}, the analysis at the $q^2+q$ points of $X$ on which $\pi$ is an isomorphism is the same 
as in the case \eqref{ample}, and the analysis on the $\P^1$ contracted by $\pi$ is very similar to \eqref{2d} and \eqref{3d}, respectively.
\end{proof}

This distribution of points for $(3,d)$ curves is identical the the distribution of points on trigonal curves over $\F_q$
as the genus goes to infinity \cite[Theorem 1.1]{wood-trigonal}, which is not surprising as each trigonal curve
can be embedded in a unique Hirzebruch surface as a bidegree $(3,d)$ curve.  However, \cite[Theorem 1.1]{wood-trigonal}
(proven with function field methods) does not follow immediately from Theorem~\ref{T:Hpoints} above because
it is not clear how adding over all Hirzebruch surfaces interacts with the limit in $d$.

\subsection{A quasi-projective version of Theorem~\ref{T:taylor}} 
We let $X, A$ and $E$ be defined in \S\ref{S:notation} and we let $X^\circ\subseteq X$ be an open subset.  We set $B^\circ:=\pi(X^\circ).$
When we shift to the quasi-projective case, there may be a non-zero probability that $f\in R_{n,d}$ is a unit on $X^\circ$, and hence it might define
the empty set.  We allow this possibility, taking the convention that the empty set is smooth.
\begin{theorem}[Quasi-projective version of Theorem~\ref{T:taylor}]
Let $X, X^\circ, A,$ and $E$ as above.  Let $Z\subsetneq \pi(X^\circ)$ be a finite subscheme.  Assume that $X^\circ \cap \left(X \setminus \pi^{-1}(Z)\right)$ is smooth and let
$n_{0}:=\max(b(m+1)-1,bp+1)$, with the constants as defined in \S\ref{S:notation}.

For all $n\geq n_0$ and for all $T\subseteq H^0(\pi^{-1}(Z),\cO_{\pi^{-1}(Z)}(nA))$, and for a random  section $f\in R_{n,d}$ as $d\ra\infty$, we have
\[
\Prob\left(\begin{matrix} H_f \cap X^\circ \cap (X \setminus \pi^{-1}(Z)) \\ \text{is smooth and } f|_{\pi^{-1}(Z)}\in T\end{matrix} \right)=
\Prob(f|_{\pi^{-1}(Z)}\in T)
 \prod_{P\in B^\circ \setminus Z} \Prob\left(\begin{matrix} H_f \cap X^\circ \text{ is} \\ \text{smooth at all points}\\ \text{of } \pi^{-1}(P)\cap X^\circ\end{matrix}\right).
\]
The product over $P\in B^\circ \setminus Z$ converges, is zero only if some factor is zero, and is always non-zero for $n$ sufficiently large.
\end{theorem}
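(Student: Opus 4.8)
The plan is to run the sieve from the proof of Theorem~\ref{T:taylor} with only cosmetic changes, replacing every occurrence of ``$Q\in X\setminus\pi^{-1}(Z)$'' by ``$Q\in X^\circ\cap(X\setminus\pi^{-1}(Z))$''. If $\dim\pi(X)=0$ then $\pi$ is constant, the displayed equality is tautological, and the non-vanishing for $n\gg 0$ follows from (the quasi-projective form of) Poonen's Bertini Theorem, cf.\ Remark~2 after Theorem~\ref{T:bertini}; so we may assume $\dim\pi(X)>0$. The crucial point is that our hypothesis forces every $Q\in X^\circ\cap(X\setminus\pi^{-1}(Z))$ to lie in the smooth locus $\Xsm$, so that Lemma~\ref{L:2kindshigh} applies to such $Q$, and that the event ``$H_f$ is singular at some such $Q$'' is contained in the event ``$H_f$ is singular at some $Q\in\Xsm$ of the same degree.'' Consequently the medium- and high-degree estimates transfer verbatim: Lemma~\ref{L:high} (via Lemma~\ref{L:2kindshigh}\eqref{E:overhigh}) and Lemma~\ref{L:med} (via Lemma~\ref{L:fiberbound} and Lemma~\ref{L:2kindshigh}\eqref{E:highovermed}) are upper bounds for the probability of a singularity anywhere on $\Xsm$ of the relevant degree, hence a fortiori on $X^\circ$; in particular the same $n_0$ works. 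For the same reason Propositions~\ref{P:converges} and~\ref{P:notzero} go through: the probability $S_P$ of a singularity in $\pi^{-1}(P)\cap X^\circ$ is no larger than its analogue for $X$ (giving convergence), while --- using the convention that the empty divisor is smooth --- each local factor $\Prob\!\left(H_f\cap X^\circ\text{ smooth at all points of }\pi^{-1}(P)\cap X^\circ\right)$ is at least the corresponding factor for $X$, since $\pi^{-1}(P)\cap X^\circ\subseteq\pi^{-1}(P)$; hence every factor is still non-zero for $n$ large. The possibility that $f$ restricts to a unit on $X^\circ$, so that $H_f\cap X^\circ=\emptyset$, only helps here and is subsumed by this convention.

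The one ingredient requiring a genuinely new --- though short --- argument is the quasi-projective analogue of the low-degree Lemma~\ref{L:low approx}: for $e_0$ larger than every degree occurring in $Z$, the probability that $H_f\cap X^\circ$ is smooth at every $Q\in X^\circ\setminus\pi^{-1}(Z)$ with $\deg(\pi(Q))<e_0$ and that $f|_{\pi^{-1}(Z)}\in T$ equals $\Prob(f|_{\pi^{-1}(Z)}\in T)$ times the product, over $P\in B^\circ\setminus Z$ with $\deg(P)<e_0$, of $\Prob\!\left(H_f\cap X^\circ\text{ smooth at all points of }\pi^{-1}(P)\cap X^\circ\right)$. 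To prove this I would take $W\subseteq\P^M$ to be the disjoint union of $Z$ and of $P^{(2)}$ over the finitely many such $P$, with connected components $Q_1,\dots,Q_s$; by Lemma~\ref{lem:independence} the image of $H^0(X,\cO_X(nA+dE))\to H^0(X_W,\cO_{X_W}(nA))$ stabilizes for $d\gg 0$ to a direct sum $\bigoplus_{i=1}^s I_i$ of the stable images $I_i\subseteq H^0(X_{Q_i},\cO_{X_{Q_i}}(nA))$. Post-composing with the restriction $H^0(X_W,\cO_{X_W}(nA))\to\bigoplus_{i=1}^s H^0(X^\circ\cap X_{Q_i},\cO(nA))$, which respects the direct-sum decomposition because the $X^\circ\cap X_{Q_i}$ are pairwise disjoint, shows that the image of $H^0(X,\cO_X(nA+dE))$ in $\bigoplus_{i=1}^s H^0(X^\circ\cap X_{Q_i},\cO(nA))$ also stabilizes to a direct sum for $d\gg 0$. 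Since $f$ is chosen uniformly at random, this exhibits the events ``$f|_{\pi^{-1}(Z)}\in T$'' and ``$H_f$ is smooth at all points of $\pi^{-1}(P)\cap X^\circ$'' (one for each low-degree $P$) as mutually independent for $d\gg 0$, which is the claim.

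With these pieces in hand, the theorem follows exactly as Theorem~\ref{T:taylor} was deduced from Lemmas~\ref{L:low approx}--\ref{L:high}: sandwich the probability in the statement between $\Prob(f\in\mathcal P^{\text{low}}_{e_0,n,T})$ and $\Prob(f\in\mathcal P^{\text{low}}_{e_0,n,T})-\Prob(f\in\mathcal Q^{\text{med}}_{e_0,n})-\Prob(f\in\mathcal Q^{\text{high}}_{n})$, now using the $X^\circ$-versions of these events, apply the low-, medium-, and high-degree results, and let $e_0\to\infty$; then invoke the (transferred) Propositions~\ref{P:converges} and~\ref{P:notzero} for convergence and non-vanishing. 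I do not expect any single hard step; the main thing to check carefully is that every degree estimate feeding into \S\ref{S:lmh} --- the B\'ezout bounds on numbers of irreducible components, the Lang--Weil point counts, and the global bound of Lemma~\ref{L:Global1} --- is unaffected by replacing $X$ with the open subset $X^\circ$, which holds simply because passing to $X^\circ$ only deletes points and components and hence preserves every upper bound used there.
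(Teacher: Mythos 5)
Your proposal is correct and follows the same route as the paper's own proof sketch, which simply says to rerun the sieve from Theorem~\ref{T:taylor} while restricting attention to points of $X^\circ$, and notes that convergence is only easier since each local factor is bounded below by (and the index set is contained in) what appears in Theorem~\ref{T:taylor}. You have merely unpacked the paper's terse "standard modifications" into explicit form, in particular spelling out why Lemma~\ref{lem:independence} still yields independence of the low-degree factors after composing with restriction to $X^\circ\cap X_{Q_i}$ and why the medium- and high-degree bounds over $\Xsm$ dominate those over $X^\circ$.
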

\begin{proof}[Sketch of proof]
The proof of Theorem~\ref{T:taylor} for the equality of probabilities goes through with only standard modifications (such as restricting attention to points in $X^\circ$ for the various lemmas in \S\ref{S:setup} and \S\ref{S:lmh}).
Further, since the product on the right involves less terms than the product on the right in Theorem~\ref{T:taylor}, the convergence statements are immediate.
\end{proof}

\subsection{Computing the factors in the product of Theorem~\ref{T:taylor}}\label{subsec:sections from PNPM}
When computing 
the values of the factors in the product of Theorem~\ref{T:taylor},
we can introduce some simplifications by (possibly)
increasing $n_0$.
For example, we can work explicitly with sections from $\P^N\times \P^M$ instead of from $X$ (see Lemma~\ref{L:regbounds}
\eqref{B:lem:section equality}).
Also, we can arrange so that, as $d\to \infty$,
the map stabilizes to a surjective map
\[
H^0(X,\cO_X(nA+dE))\to H^0(X_{P^{(2)}}, \cO_{X_{P^{(2)}}}(nA)),
\]
so it remains to determine which sections of $H^0(X_{P^{(2)}}, \cO_{X_{P^{(2)}}}(nA))$ don't vanish in any first order
infinitesimal nieghborhood (see Lemma~\ref{lem:stabilization}).

\begin{lemma}\label{L:regbounds}
We have the following uniform bounds:
\begin{enumerate}
	\item\label{B:P2regnew}  There exists $n_2$ (a function of $X,A,E$) such that $\reg_A \II_{X_{\olP^{(2)}}\subseteq \P^N} \leq n_2$ for all geometric points $\olP\in B\tensor_{\F_q} \overline{\F}_q$.
	\item\label{B:lem:section equality} There exist $n_3$ and $d_3$ (functions of $X,A,E$) such that we have a surjection
\[
H^0(\P^N \times \P^M,\cO(n,d)) \ra H^0(X,\cO(nA+dE))
\]
for all $n\geq n_3$ and all $d\geq d_3$.
\end{enumerate}
\end{lemma}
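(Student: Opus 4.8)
The plan is to prove the two parts by the two mechanisms already exploited in the paper: a flattening stratification together with Gotzmann's regularity bound for~\eqref{B:P2regnew} (exactly as in Lemma~\ref{L:Global1}), and the bigraded form of Serre vanishing for~\eqref{B:lem:section equality}.

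For~\eqref{B:P2regnew}, the first step is to realize the whole family $\{X_{\olP^{(2)}}\}_{\olP}$ as the family of geometric fibers of a single projective morphism, so that Lemma~\ref{L:Global1}'s argument applies verbatim. Let $\Delta^{(2)}\subseteq B\times_{\F_q}B$ be the first infinitesimal neighborhood of the diagonal, with the two projections $\mathrm{pr}_1,\mathrm{pr}_2\colon\Delta^{(2)}\to B$. Put $Y:=X\times_{B,\mathrm{pr}_1}\Delta^{(2)}$ and let $\rho\colon Y\to B$ be the projection $Y\to\Delta^{(2)}$ followed by $\mathrm{pr}_2$. Then $\rho$ is projective (a composition of projective morphisms; note $\mathrm{pr}_2|_{\Delta^{(2)}}$ is finite), it carries the relatively ample sheaf obtained by pulling back $A$ along $Y\to X$, and a local computation with the ideal $\mathcal I_\Delta^2$ shows that restricting $\Delta^{(2)}$ to $B\times\{\olP\}$ yields $\Spec(\cO_{B,\olP}/\mathfrak m_\olP^2)$ with its tautological map to $B$; hence the fiber of $\rho$ over a geometric point $\olP$ is precisely $X_{\olP^{(2)}}$, as a subscheme of $\P^N$ via $A$. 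Choosing a flattening stratification for $\rho$, the morphism becomes flat over each of finitely many strata, so the schemes $X_{\olP^{(2)}}$ attain only finitely many Hilbert polynomials with respect to $A$; applying Gotzmann's regularity bound to each of these finitely many polynomials gives a common bound on $\reg_A\II_{X_{\olP^{(2)}}\subseteq\P^N}$, and we take $n_2$ to be that bound. (A direct bootstrap from the bound on $\reg_A\II_{X_P}$ of Lemma~\ref{L:Global1}, using the filtration of $\cO_{X_{\olP^{(2)}}}$ by powers of $\mathfrak m_\olP$, runs into the problem that regularity need not be controlled under the relevant quotients, so the diagonal-neighborhood device seems preferable.)

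For~\eqref{B:lem:section equality}, let $\II_X$ be the ideal sheaf of $X$ in $\P^N\times\P^M$ for the embedding $(\iota,\pi)$. Twisting the defining sequence of $X$ by $\cO(n,d)$ gives
\[
0\to\II_X(n,d)\to\cO_{\P^N\times\P^M}(n,d)\to\cO_X(nA+dE)\to 0,
\]
and since $H^1(\P^N\times\P^M,\cO(n,d))=0$ for all $n,d\ge0$ by K\"unneth, the restriction map of the statement is surjective as soon as $H^1(\P^N\times\P^M,\II_X(n,d))=0$. So it suffices to apply bigraded Serre vanishing to $\II_X$: there exist $n_3$ and $d_3$, depending only on $\II_X$ and hence on $X,A,E$, with $H^i(\P^N\times\P^M,\II_X(n,d))=0$ for all $i>0$ whenever $n\ge n_3$ and $d\ge d_3$. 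One way to produce this is to first take $d$ large enough that $R^{>0}\pi_{1*}(\II_X(0,d))=0$ (relative Serre vanishing for $\pi_1\colon\P^N\times\P^M\to\P^N$), which by Leray and the projection formula identifies $H^{>0}(\P^N\times\P^M,\II_X(n,d))$ with $H^{>0}(\P^N,\pi_{1*}(\II_X(0,d))\otimes\cO_{\P^N}(n))$; the sheaves $\pi_{1*}(\II_X(0,d))$ fit together into a finitely generated graded module over $\cO_{\P^N}\otimes_{\F_q}\F_q[t_0,\dots,t_M]$, so their Castelnuovo--Mumford regularities are uniformly bounded for $d\gg0$, giving the desired $n_3$. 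Alternatively one can simply invoke multigraded Castelnuovo--Mumford regularity.

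The step I expect to be the genuine obstacle is the uniformity in~\eqref{B:lem:section equality}: we need the vanishing to take effect once $n\ge n_3$ \emph{and} $d\ge d_3$ independently, not merely for $n+d$ or $\min(n,d)$ large, and making this precise is exactly the content of the bigraded vanishing statement — so in the write-up I would either cite it or carry out the finite-generation argument above carefully. Part~\eqref{B:P2regnew}, by contrast, is routine once the fibers of $\rho$ are correctly identified with the $X_{\olP^{(2)}}$, the only delicate point being that this identification is scheme-theoretic, which one checks in local coordinates with $\mathcal I_\Delta^2$.
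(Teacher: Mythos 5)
Your proposal is correct, and for part~\eqref{B:P2regnew} you take a genuinely different route from the paper. You build a single projective family $\rho\colon Y=X\times_{B,\mathrm{pr}_1}\Delta^{(2)}\to B$ (via the first infinitesimal neighborhood of the diagonal) whose geometric fibers are exactly the $X_{\olP^{(2)}}$, and then you invoke the flattening-stratification-plus-Gotzmann argument verbatim from Lemma~\ref{L:Global1}. The scheme-theoretic identification you flag is indeed the one thing to check, and your local computation (the image of $\mathcal I_\Delta^2$ in $\cO_B\otimes k(\olP)$ is $\mathfrak m_{\olP}^2$) handles it correctly; note also that $Y\hookrightarrow X\times B\hookrightarrow \P^N\times B$ is closed because $\Delta^{(2)}\hookrightarrow B\times B$ is, so the stratification takes place inside a constant $\P^N$ as required. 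The paper instead stays algebraic: it reuses the generator-degree bound from Lemma~\ref{L:Global1} for the reduced fibers, upgrades it to a generator-degree bound for the $\II_{X_{\olP^{(2)}}}$ (these are essentially squares of the reduced ideals along $\pi$) via~\cite[Theorem~3.1]{swanson}, and then converts generator degrees into a regularity bound using~\cite[Proposition~3.8]{bayer-mumford}. So contrary to what your parenthetical suggests, the direct bootstrap is available once one has Swanson's uniform bound in hand; your diagonal construction is a clean geometric alternative that avoids those citations at the cost of a fibered-product verification.

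For part~\eqref{B:lem:section equality} you and the paper are proving the same bigraded Serre-vanishing statement for $\II_X$, just by different bookkeeping: the paper takes a finite free resolution of the corresponding module over the bigraded Cox ring of $\P^N\times\P^M$, sheafifies, and picks $(n_3,d_3)$ to clear all the shifts so K\"unneth kills every term; you push forward along $\pi_1$, use relative Serre vanishing plus a uniform regularity bound for the sheaves $\pi_{1*}(\II_X(0,d))$, or simply cite multigraded Castelnuovo--Mumford regularity. Both give the needed corner $\{n\ge n_3,\;d\ge d_3\}$ of vanishing, and your worry about uniformity (that the bound must be a product region, not a diagonal) is exactly the point both arguments are designed to address. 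Either write-up is fine; the paper's resolution argument is perhaps the shorter path since the Cox ring of $\P^N\times\P^M$ is a polynomial ring and Hilbert's syzygy theorem applies directly.
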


\begin{proof}
Let $\II_{X_{\olP}}$ be the ideal sheaf of $X_{\olP}\subseteq \P^N$.  The proof of Lemma~\ref{L:Global1}
gives the existence of a global bound on the degrees of generators of the ideals $\II_{X_{\olP}}$.  
To obtain $n_2$, we first need a global bound on the degrees of the generator that any ideal sheaf $\II_{X_{\olP^{(2)}}}$, and this follows from the global bound
for the reduced fibers combined with \cite[Theorem~3.1]{swanson}. Hence, there is a global bound on the degrees of the generators of the ideals $\II_{X_{\olP^{(2)}}}$, and this may be used to obtain a global bound on their regularity by~\cite[Proposition~3.8]{bayer-mumford}.

For part \eqref{B:lem:section equality}, we let $\mathcal J_X$ be the ideal sheaf for $X\subseteq \P^N\times \P^M$.  Let $\bF$ be a resolution of $\mathcal J_X$ via direct sums of line bundles on $\P^N\times \P^M$.  Then choose $(n_3,d_3)$ so that $n_3$ is larger than any $x$-degree that appears in $\bF$ and $d_3$ is larger than any $y$-degree that appears in $\bF$.    It follows that $\mathcal J_X(n,d)$ has no higher cohomology when $n\geq n_3$ and $d\geq d_3$, implying the desired surjectivity.
\end{proof}

\begin{lemma}\label{lem:stabilization}
Let $V\subsetneq B$ be a reduced $0$-dimensional scheme and  let $W=V^{(2)}$.  Then
\[
H^0(\P^N\times\P^M,\cO(n,d))\ra H^0(X_W,\cO_{X_W}(nA))
\]
is surjective for all $n\geq \max\{n_2,n_3\}$ and $d\geq \max\{\deg(W)-1,d_3\}$ (with all constants
as defined in Lemma~\ref{L:regbounds}).
Thus 
$
H^0(X,\cO(nA+dE))\ra H^0(X_W,\cO_{X_W}(nA))
$
is also surjective. 

\end{lemma}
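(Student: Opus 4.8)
The plan is to factor the restriction map through $\P^N\times W$. If $\dim B=0$ then $V\subsetneq B$ forces $V$, hence $W$, to be empty and the statement is vacuous, so assume $\dim B>0$; then we may and do use the trivialization of $\cO_{\P^M}(1)$ on the connected components of the $0$-dimensional scheme $W\subseteq\P^M$ from \S\ref{S:notation} to identify $\cO_{X_W}(nA+dE)$ with $\cO_{X_W}(nA)$. Since $X\subseteq\P^N\times\P^M$, we have $X_W=X\times_{\P^M}W\subseteq(\P^N\times\P^M)\times_{\P^M}W=\P^N\times W$, so the map in question factors as
\[
H^0(\P^N\times\P^M,\cO(n,d))\xrightarrow{\ \alpha\ }H^0(\P^N\times W,\cO(n,d)|_{\P^N\times W})\xrightarrow{\ \beta\ }H^0(X_W,\cO_{X_W}(nA)).
\]
It suffices to show $\alpha$ and $\beta$ are each surjective; since the composite also factors through $H^0(X,\cO_X(nA+dE))$ (because $X_W\subseteq X\subseteq\P^N\times\P^M$), surjectivity of the composite immediately yields the last assertion of the lemma.

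For $\alpha$: the subscheme $\P^N\times W$ is the preimage of $W$ along the flat projection $\pi_2\colon\P^N\times\P^M\to\P^M$, so by the projection formula $H^1(\P^N\times\P^M,\II_{\P^N\times W}(n,d))\cong H^0(\P^N,\cO(n))\otimes_{\F_q}H^1(\P^M,\II_W(d))$. A $0$-dimensional subscheme of $\P^M$ of degree $\deg(W)$ imposes independent conditions on forms of each degree $\ge\deg(W)-1$, hence $H^1(\P^M,\II_W(d))=0$ and $\alpha$ is surjective once $d\ge\deg(W)-1$, for every $n$. This is the source of the bound on $d$; the auxiliary bound $d\ge d_3$ enters only through Lemma~\ref{L:regbounds}\eqref{B:lem:section equality}, which in addition makes $H^0(\P^N\times\P^M,\cO(n,d))\to H^0(X,\cO_X(nA+dE))$ surjective.

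For $\beta$: decompose $W=\bigsqcup_i P_i^{(2)}$ into connected components, so $\P^N\times W=\bigsqcup_i\P^N_{R_i}$ with $R_i=\Gamma(P_i^{(2)},\cO)$ Artinian and $X_W=\bigsqcup_i X_{P_i^{(2)}}$, and treat one index $i$ at a time; there $\beta$ is $H^0(\P^N_{R_i},\cO(n))\to H^0(X_{P_i^{(2)}},\cO_{X_{P_i^{(2)}}}(nA))$, which is surjective as soon as $H^1(\P^N_{R_i},\II_{X_{P_i^{(2)}}\subseteq\P^N_{R_i}}(n))=0$. Because finite fields are perfect, base change to $\overline{\F}_q$ splits $P_i^{(2)}$ into a disjoint union of the first-order neighbourhoods $\olP^{(2)}$ of the geometric points $\olP$ over $P_i$, and cohomology commutes with this flat base change, so it suffices to show $H^1(\P^N_{\bar R},\II_{X_{\olP^{(2)}}\subseteq\P^N_{\bar R}}(n))=0$ for a geometric point $\olP$, with $\bar R=\Gamma(\olP^{(2)},\cO)$. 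Now $X_{\olP^{(2)}}$ is also a closed subscheme of $\P^N_{\overline{\F}_q}$ (it is closed in $X_{\overline{\F}_q}$, being pulled back from $\olP^{(2)}\hookrightarrow B_{\overline{\F}_q}$), so for the finite flat map $g\colon\P^N_{\bar R}\to\P^N_{\overline{\F}_q}$, exactness of $g_*$ and the projection formula turn that $H^1$ into $H^1(\P^N_{\overline{\F}_q},(g_*\II_{X_{\olP^{(2)}}\subseteq\P^N_{\bar R}})(n))$, and applying $g_*$ to the ideal sequence of $X_{\olP^{(2)}}$ in $\P^N_{\bar R}$ gives $0\to g_*\II_{X_{\olP^{(2)}}\subseteq\P^N_{\bar R}}\to\cO_{\P^N_{\overline{\F}_q}}\otimes_{\overline{\F}_q}\bar R\to\cO_{X_{\olP^{(2)}}}\to 0$. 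Hence $\reg(g_*\II_{X_{\olP^{(2)}}\subseteq\P^N_{\bar R}})\le 1+\reg(\cO_{X_{\olP^{(2)}}})=\reg(\II_{X_{\olP^{(2)}}\subseteq\P^N_{\overline{\F}_q}})\le n_2$ by Lemma~\ref{L:regbounds}\eqref{B:P2regnew}, so the $H^1$ vanishes for $n\ge n_2-1$ and $\beta$ is surjective for $n\ge n_2$, finishing the argument for $n\ge\max\{n_2,n_3\}$ and $d\ge\max\{\deg(W)-1,d_3\}$.

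I expect the main obstacle to be precisely this last reconciliation: once the $\P^M$-degree is peeled off, the fat fiber sits in the ambient space $\P^N_{\bar R}$ over an Artinian ring, whereas Lemma~\ref{L:regbounds}\eqref{B:P2regnew} controls its regularity only in projective space over the residue field, and $X_{\olP^{(2)}}$ need not be flat over $\bar R$. The argument above sidesteps flatness by pushing forward along the finite flat $g$ and bounding $\reg(g_*\II)$ from that of the structure sheaf of the fat fiber; carrying this out rigorously requires checking that $g_*$ of the ideal sheaf really is the kernel displayed above and that the separability of $\F_q\hookrightarrow\kappa(P_i)$ gives the claimed decomposition of $P_i^{(2)}$ after base change.
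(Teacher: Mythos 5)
Your proof is correct and follows essentially the same route as the paper's: factor the restriction map through $H^0(\P^N\times W,\cO(n))$, verify surjectivity onto $\P^N\times W$ from the bound $d\geq\deg(W)-1$, and deduce surjectivity of the second leg after base change to $\overline{\F}_q$ using the uniform regularity bound from Lemma~\ref{L:regbounds}\eqref{B:P2regnew}. The only real divergence is in the last step: the paper observes that the subspace $H^0(\P^N,\cO(n))\hookrightarrow H^0(\P^N_{\bar R},\cO(n))$ (pulled back along the projection) already maps onto $H^0(X_{\olP^{(2)}},\cO(nA))$, directly from $\reg\,\II_{X_{\olP^{(2)}}\subseteq\P^N}\leq n_2$, which avoids any computation over the Artinian base; you instead push the ideal sequence forward along the finite flat $g\colon\P^N_{\bar R}\to\P^N$ and bound $\reg(g_*\II)$ via $\reg(\cO_{X_{\olP^{(2)}}})$, reaching the same conclusion by a slightly longer calculation. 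The paper's shortcut is cleaner, but your pushforward version is valid; note also that, as you anticipate in your final paragraph, the étale splitting of $P_i^{(2)}$ over $\overline{\F}_q$ does hold because $\F_q$ is perfect, and this is exactly the step the paper makes (wordlessly) when it writes $V=\{P_1,\dots,P_{\deg V}\}$ after base change.
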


\begin{proof}
We consider the commutative square:
\[
\xymatrix{
H^0(\P^N\times \P^M, \cO(n,d)) \ar[r]^-\tau\ar[d]^-{\rho}& H^0(X,\cO(nA+dE))\ar[d]^-{\rho'}\\
H^0(\P^N\times W, \cO(n)) \ar[r]^-{\tau'}& H^0(X_W, \cO_{}(nA)).
}
\]
Since $n\geq n_3$ and $d\geq d_3$, $\tau$ is surjective by Lemma~\ref{L:regbounds}.  Whenever $d\geq \deg(W)-1$, it follows (by~\cite[Lemma~2.1]{poonen} for example) that $\rho$ is surjective.  Thus $\im(\rho')=\im(\tau')$ and it suffices to prove surjectivity of $\tau'$.

It further suffices to prove the statement after passing to the algebraic closure, so we henceforth work over $\Fbar_q$.  Since $V$ is reduced, after passing to the algebraic closure, we have $V=\{P_1, \dots, P_{\deg V}\}$, and we can factor the above map in a different way:
\[
H^0(\P^N\times \P^M, \cO(n,d))\longrightarrow \bigoplus_{j=1}^{\deg V} H^0(\P^N\times P_j^{(2)}, \cO_{\P^N\times P_j^{(2)}}(n))\longrightarrow \bigoplus_{j=1}^{\deg V} H^0(X_{P_j^{(2)}}, \cO_{X_{P_j^{(2)}}}(nA)).
\]
The first map is surjective since $d\geq \deg(W)-1$.
To see that the second map is surjective, it suffices to check this in each factor.  And then we have that
\[
H^0(\P^N,\cO(n)) \to H^0(X_{P_j^{(2)}}, \cO_{X_{P_j^{(2)}}}(nA))
\]
is surjective by Lemma~\ref{L:regbounds} \eqref{B:P2regnew} since $n\geq n_2$.
\end{proof}

\bibliographystyle{alpha} 
\bibliography{myrefs.bib}

\end{document}